\newtheorem{formula}{}[section]
\newtheorem{proposition}[formula]{Proposition}
\newtheorem{corollary}[formula]{Corollary}
\newtheorem{lemma}[formula]{Lemma}
\newtheorem{theorem}[formula]{Theorem}
\theoremstyle{definition}
\newtheorem{definition}[formula]{Definition}
\newtheorem{example}[formula]{Example}
\theoremstyle{remark}
\newtheorem*{remark}{Remark}
\begin{document}

\title{Lie algebras of slow growth and Klein-Gordon equation}
\author{Dmitry V. Millionshchikov}
\thanks{This work is supported by RFBR under the grant 17-01-00671}
\subjclass{17B80, 17B67, 35B06}
\keywords{characteristic Lie algebra, hyperbolic PDE, Sine-Gordon equation, Tzitzeica equation, Darboux-integrability,  naturally graded Lie algebra,  loop algebras, Kac-Moody algebra, pro-nilpotent Lie algebra, Bell polynomial, slow growth}
\address{Department of Mechanics and Mathematics, Moscow
State University, 1 Leninskie gory, 119992 Moscow, Russia}
\email{million@mech.math.msu.su}

\begin{abstract}
We discuss the notion of characteristic Lie algebra of a hyperbolic PDE. The integrability 
of a hyperbolic PDE is closely related to the properties of the corresponding characteristic Lie algebra $\chi$.
We establish two explicit isomorphisms: 

1) the first one is between the characteristic Lie algebra $\chi(\sinh{u})$ of the sinh-Gordon equation $u_{xy}=\sinh{u}$ and the non-negative part  ${\mathcal L}({\mathfrak sl}(2,{\mathbb C}))^{\ge 0}$ of the loop algebra of ${\mathfrak sl}(2,{\mathbb C})$ that corresponds to the Kac-Moody algebra $A_1^{(1)}$
$$
\chi(\sinh{u})\cong {\mathcal L}({\mathfrak sl}(2,{\mathbb C}))^{\ge 0}={\mathfrak sl}(2, {\mathbb C}) \otimes {\mathbb C}[t].
$$

2) the second isomorphism is for the Tzitzeica equation $u_{xy}=e^u{+}e^{-2u}$
$$
\chi(e^u{+}e^{-2u}) \cong {\mathcal L}({\mathfrak sl}(3,{\mathbb C}), \mu)^{\ge0}=\bigoplus_{j=0}^{+\infty}{\mathfrak g}_{j ({\rm mod} \; 2)} \otimes t^j, 
$$ 
where ${\mathcal L}({\mathfrak sl}(3,{\mathbb C}), \mu)=\bigoplus_{j \in {\mathbb Z}}{\mathfrak g}_{j ({\rm mod} \; 2)} \otimes t^j$ is the twisted loop algebra of the simple Lie algebra  ${\mathfrak sl}(3,{\mathbb C})$ that corresponds to the Kac-Moody algebra $A_2^{(2)}$.

Hence the Lie algebras $\chi(\sinh{u})$ and  $\chi(e^u{+}e^{-2u})$ are slowly linearly growing Lie algebras with average growth rates $\frac{3}{2}$ and $\frac{4}{3}$ respectively.

\end{abstract}
\date{}

\maketitle

\section*{Introduction}

The concept of characteristic Lie algebra $\chi(f)$ of a hyperbolic system of PDE
\begin{equation}
\label{KG_System}
u^i_{xy}=f^i(u^1,\dots,u^n), i=1,\dots, n,
\end{equation}
 was  introduced by Leznov, Smirnov, Shabat and Yamilov \cite{ShYa, LSmSh}. It is a natural generalization of the notion of characteristic vector field of a hyperbolic PDE that was first proposed by Goursat in 1899. In his classical paper \cite{G}  Goursat introduced a very effective algebraic approach to the problem of classifying Darboux-integrable equations.

In spite of the rather large number of papers where this algebraic object is studied \cite{ShYa, LSmSh, ZMHSh, ZM, Sakieva}, it can not be said that there exists any completely unambiguous definition of characteristic Lie algebra $\chi$ of a hyperbolic non-linear PDE. We use the definition of characteristic Lie algebra proposed in the initial papers \cite{ShYa, LSmSh}.

An important step in the study of hyperbolic nonlinear Liouville-type systems was made in \cite{Leznov, LSmSh, LSS} where so-called exponential hyperbolic systems were considered
$$
u_{x y}^j=e^{\rho_j}, \; \rho_j=a_{j1}u^1+\dots+a_{jn}u^n, \;j=1,\dots, n.
$$

It pas proved in \cite{Leznov} that if $A=(a_{ij})$ is a non-degenerate Cartan matrix the exponential hyperbolic system (\ref{exp_Cartan}) is Darboux-integrable. The proof \cite{Leznov}  consisted in the construction of a complete solution in
an explicit form which depends on $2n$ arbitrary functions, thus generalizing the one-dimensional case of the classical Liuoville equation $u_{xy}=e^u$.
Later it was claimed in the preprint \cite{ShYa} that the  main  result in \cite{Leznov} can be extended to an arbitrary  generalized Cartan matrix $A$ (possibly degenerate) if we apply the inverse scattering problem method. The two-dimensional case $n=2$  was studied explicitly in  \cite{ShYa, LSmSh}.
\begin{equation}
\label{two_dim}
\left \{ \begin{array}{c}
u^1_{x y}=e^{(a_{11}u^1+a_{12}u^2)},\\
u^2_{x y}=e^{(a_{21}u^1+a_{22}u^2)},\\
\end{array}\right.,
\; A=\begin{pmatrix}
a_{11} & a_{12}\\
a_{21} & a_{22}
\end{pmatrix}.
\end{equation}

It was proved  in \cite {ShYa, LSmSh} that 
for the generalized  Cartan matrices 
$$
A_1=\begin{pmatrix} 2 & {-}2 \\ {-}2 & 2\end{pmatrix}, \; 
A_2=\begin{pmatrix} 2 & {-}4 \\ {-}1 & 2\end{pmatrix}
$$
the corresponding exponential systems (\ref{two_dim}) are integrable by the inverse scattering method. Moreover, the commutants $[\chi(A_1),\chi(A_1)], [\chi(A_2), \chi(A_2)]$ of the corresponding characteristic Lie algebras 
$\chi(A_1),\chi(A_2)$ are isomorphic  to maximal pro-nilpotent subalgebras $N(A^{(1)}_1), N(A^{(2)}_2)$ of the Kac-Moody algebras $A^{(1)}_1, A^{(2)}_2$ respectively (that correspond to the generalized Cartan matrices $A_1$ and $A_2$). Exponential systems  (\ref{two_dim}) corresponding to nondegenerate Cartan $2\times2$-matrices 
$$
\begin{pmatrix} 2 & 0 \\ 0 & 2\end{pmatrix}, 
\begin{pmatrix} 2 & {-}1 \\ {-}1 & 2\end{pmatrix}
\begin{pmatrix} 2 & {-}2 \\ {-}1 & 2\end{pmatrix}, 
\begin{pmatrix} 2 & {-}3 \\ {-}1 & 2\end{pmatrix}
$$
of semisimple Lie algebras $A_1{\oplus}A_1, A_2, C_2, G_2$ are Darboux-integrable. Their characteristic Lie algebras are finite-dimensional solvable Lie subalgebras in semisimple Lie algebras listed above. These finite-dimensional solvable Lie algebras and infinite-dimensional characteristic Lie algebras $\chi(A_1),\chi(A_2)$  were unified into a class of slowly growing Lie algebras \cite{ShYa, LSmSh}.

Originally \cite{LSmSh, ShYa}, when it was talked about the characteristic Lie algebra of finite growth, it was in mind Kac's classification \cite{Kac1} of simple ${\mathbb Z}$-graded Lie algebras of finite growth. The condition of simple ${\mathbb Z}$-grading is very restrictive, meanwhile, the growth of a characteristic Lie algebra $\chi(f)$  must be understood from the point of view of the behavior of its growth function $F_{\mathfrak g}(n)$, i.e. the asymptotics of the dimension $F_{\mathfrak g}(n)=\dim V_n$ of the space $V_n$ of commutators of order at most $n$ of generators.

A finitely generated characteristic Lie algebra $\chi(f)$ of a hyperbolic Klein-Gordon system (\ref{KG_System}) is a pro-solvable Lie algebra whose  commutant $[\chi(f),\chi(f)]$ is a pro-nilpotent naturally graded Lie algebra.

By {\bf Lemma \ref{growth_functions}} we assert that the growth functions of $\chi(f)$ and its commutant $[\chi(f),\chi(f)]$ differ by a positive constant $C(\chi(f))$, which  equals to the dimension of the maximal toral subalgebra of $\chi(f)$.
$$
F_{\chi(f)}(n)=F_{[\chi(f),\chi(f)]}(n)+C(\chi(f)).
$$
Thus, the study of the growth function $F_{\chi(f)}(n)$ of the entire characteristic Lie algebra $\chi(f)$ reduces to studying the growth of the commutant $[\chi(f),\chi(f)]$.

The problem of classification of  ${\mathbb N}$-graded Lie algebras of slow growth is much more complicated problem than the classification of simple ${\mathbb Z}$-graded Lie algebras of finite growth. The Kac list \cite{Kac1} contains a countable number of different Lie algebras,  meanwhile in the case of naturally graded Lie algebras with two generators, an uncountable family of pairwise non-isomorphic Lie algebras of linear growth appears \cite{Mill}. 
There are only three Klein-Gordon equations admitting non-trivial higher symmetries \cite{ZhSh1}.
\begin{itemize}
\item Liouville equation $u_{xy}=e^u$;
\item sinh-Gordon equation
$u_{xy}=\sinh{u}$;
\item Tzitzeica equation
$u_{xy}=e^u+e^{-2u}$.
\end{itemize}

1) It's an elementary exercise to show that
the characteristic Lie algebra   $\chi(e^u)$ of the Liuoville equation 
is the two-dimensional solvable Lie algebra. It can be defined by its basis $X_0, X_1$ and the  unique relation $[X_0, X_1]=X_1$. Its commutant 
$[\chi(e^u),\chi(e^u)]$ is one-dimensional abelian Lie algebra spanned by  $X_1$.

We study two remaining cases and prove

2) {\bf Theorem \ref{sinh-Gordon_theorem}}. {\it The characteristic Lie algebra $\chi(\sinh{u})$ of the sinh-Gordon equation 
$u_{xy}=\sinh{u}$
is  isomorphic to the polynomial loop algebra ${\mathcal L}({\mathfrak sl}(2,{\mathbb C}))^{\ge0}$
$$
\chi(\sinh{u})\cong {\mathcal L}({\mathfrak sl}(2,{\mathbb C}))^{\ge0}={\mathfrak sl}(2,{\mathbb C}) {\otimes} {\mathbb C}[t],
$$  
Its commutant $[\chi(\sinh{u}),\chi(\sinh{u})]$ is isomorphic to the maximal pro-nilpotent Lie subalgebra $N(A_1^{(1)})$
of the Kac-Moody algebra $A_1^{(1)}$.}

3)  {\bf Theorem \ref{Tzitzeica_theorem}}. {\it The characteristic Lie algebra $\chi(e^u{+}e^{-2u})$ of the Tzitzeica equation 
$u_{xy}=e^u{+}e^{-2u}$
is  isomorphic to the twisted polynomial loop algebra ${\mathcal L}({\mathfrak sl}(3,{\mathbb C}), \mu)^{\ge0}$
$$
\chi(e^u{+}e^{-2u})\cong {\mathcal L}({\mathfrak sl}(3,{\mathbb C}), \mu)^{\ge0}=\bigoplus_{j =0}^{+\infty}{\mathfrak g}_{j ({\rm mod} \; 2)} \otimes t^j, {\mathfrak sl}(3,{\mathbb C})={\mathfrak g}_0\oplus{\mathfrak g}_1
$$  
where $\mu$ is a diagram automorphism of ${\mathfrak sl}(3,{\mathbb C})$, $\mu^2={\rm Id}$, and ${\mathfrak g}_0$,  ${\mathfrak g}_1$ are eigen-spaces of $\mu$ corresponding to eigen-values $1,{-}1$ respectively,  $[{\mathfrak g}_s,{\mathfrak g}_q] \subset {\mathfrak g}_{s+q \;({\rm mod} \;2)}$.

Its commutant $[\chi(e^u{+}e^{-2u}),\chi(e^u{+}e^{-2u})]$ is isomorphic to the maximal pro-nilpotent Lie subalgebra $N(A_2^{(2)})$
of the Kac-Moody algebra $A_2^{(2)}$.}

At this point some very important observations need to be made. 

It was discussed in  \cite{LSmSh, LS}  that there is a reduction of two-dimensional systems  (\ref{two_dim}) with matrices $A_1$ and $A_2$ to the sine-Gordon and Tzitzeika equations respectively. However,  explicitly the characteristic Lie algebras $\chi(\sinh{u})$ and  $\chi(e^u{+}e^{-2u})$ have not been calculated there. The question of describing such algebras is very important, because, the characteristic Lie algebras of the one-dimensional and two-dimensional systems (\ref{two_dim}) are different by the definition. This circumstance, as well as some gaps in proofs of  \cite{LSmSh, LS} led to the appearance of \cite{ZM, Sakieva}, where the problem of an explicit description of characteristic Lie algebras $\chi(\sinh{u})$ and  $\chi(e^u{+}e^{-2u})$ was posed and solved. It was solved from the point of view of constructing infinite bases and structure relations (different from the bases and relations proposed in this article). However   
the extremely important  relationship between the characteristic Lie algebras of  $\chi(\sinh{u})$ and  $\chi(e^u{+}e^{-2u})$ of the sinh-Gordon and Tzitzeica equations and affine Kac-Moody algebras $A_1^{(1)}$ and $A_2^{(2)}$ escaped the attention of the authors in \cite{ZM, Sakieva}. In addition, we wrote the generators of these algebras in terms of Bell polynomials, which helped us to determine and relate various gradings of $\chi(\sinh{u})$ and  $\chi(e^u{+}e^{-2u})$. Also an interesting feature was the observation that the Lie algebras $\chi(\sinh{u})$ and $\chi(\sin{u})$ are non-isomorphic over ${\mathbb R}$ (but isomorphic over  ${\mathbb C}$).

The author is grateful to Sergey Smirnov and Victor Buchstaber for valuable comments and remarks.

\section{Characteristic Lie algebra of hyperbolic non-linear PDE}
Here and in the sequel, we define, if not specifically stated, all Lie algebras over the field ${\mathbb K}$, which is either the field ${\mathbb R}$ of reals or the field ${\mathbb C}$ of complex numbers.

Consider a  system of hyperbolic PDE
\begin{equation}
\label{hyperbolic_s}
u_{x y}^j=f^j(u), \;j=1,\dots, n, \; u=(u^1,\dots,u^n),
\end{equation}
where  each function $f^j(u), j=1,\dots,n,$ belongs to a ${\mathbb K}$-algebra $C^{\omega}(\Omega)$  of (locally) analytic ${\mathbb K}$-valued functions of $n$ real variables $u=(u^1,\dots,u^n)$ defined on some open domain $\Omega \subset {\mathbb R}^n$ (it is more convinient to consider germs instead of functions, but we will keep the definition from \cite{LSmSh, ShYa}). By $x,y$ we denote two coordinates on the real plane ${\mathbb R}^2$ and assume solutions of (\ref{hyperbolic_s}) to be (locally) analytic functions of $x,y$. 

Take an algebra $C^{\omega}(\Omega)[u_1,u_2,\dots]=C^{\omega}(\Omega)[u_1^1,\dots,u_1^n,u_2^1,\dots, u_2^n,\dots]$ of polynomials in an infinite set of variables $\{ u_i=(u^1_i,\dots,u^n_i), i \ge 1\}$ with coefficients in $C^{\omega}(\Omega)$. The multiplicative structure in $C^{\omega}(\Omega)[u_1,u_2,\dots]$ is defined as the standard 
product of polynomials.
\begin{example}For $n=2$ the following polynomial 
$$
P(u^1,u^2;u_1^1,u_1^2,u_2^1,u_2^2,\dots)=\sin{(u^1{+}2u^2)}\cdot (u_1^1)^2+2\cos{u^1}\cdot (u_2^1)^3,
$$
belongs to the algebra $C^{\omega}(\Omega)[u_1^1,u^2_1,u_2^1,u_2^2,\dots]=C^{\omega}(\Omega)[u_1,u_2,\dots]$.
\end{example}

Define a Lie algebra ${\mathcal L}$ of the first order linear differential operators of the form
\begin{equation}
\label{vector_fields}
X=\sum_{k=1}^{+\infty}P_k^{\alpha}(u;u_1,u_2,\dots)\frac{\partial}{\partial u^{\alpha}_k},
\end{equation}
where all coefficients $P_i^{\alpha}(u;u_1,u_2,\dots), \alpha=1,\dots,n, i \ge 1$, are polynomials from   $C^{\omega}(\Omega)[u_1,u_2,\dots]$. We used tensor rules in (\ref{vector_fields}) for summation
$$
P_i^{\alpha}(u;u_1,u_2,\dots) \frac{\partial}{\partial u^{\alpha}_k}=\sum_{\alpha=1}^n P_i^{\alpha}(u;u_1,u_2,\dots)\frac{\partial}{\partial u^{\alpha}_k}.
$$
\begin{proposition}
One can remark that ${\mathcal L}$ is an example of  $({\mathbb K}, C^{\omega}(\Omega))$-Lie algebra (Lie-Reinhart algebra) \cite{R} with trivial action of ${\mathcal L}$ on $C^{\omega}(\Omega))$.
$$
[f(u)X,g(u)Y]=f(u)g(u)[X,Y], \; f(u), g(u) \in C^{\omega}(\Omega), \; X, Y \in 
{\mathcal L}.
$$
\end{proposition}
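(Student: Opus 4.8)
The plan is to verify the two structural assertions bundled into the statement: first, that $\mathcal{L}$ is closed under the commutator bracket (so that it is genuinely a $\mathbb{K}$-Lie algebra and, being obviously a $C^{\omega}(\Omega)$-module, a candidate Lie--Rinehart algebra), and second, that the displayed identity $[f(u)X,g(u)Y]=f(u)g(u)[X,Y]$ holds. For closure I would take two operators $X=\sum_{k\ge1}P_k^{\alpha}\,\partial/\partial u_k^{\alpha}$ and $Y=\sum_{k\ge1}Q_k^{\alpha}\,\partial/\partial u_k^{\alpha}$ with coefficients in $C^{\omega}(\Omega)[u_1,u_2,\dots]$ and compute $[X,Y]$ in the usual way: the coefficient of $\partial/\partial u_k^{\beta}$ in $[X,Y]$ is $X(Q_k^{\beta})-Y(P_k^{\beta})$. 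Since $X$ and $Y$ differentiate only with respect to the variables $u_j^{\alpha}$ with $j\ge 1$, and differentiating a polynomial of $C^{\omega}(\Omega)[u_1,u_2,\dots]$ with respect to any $u_j^{\alpha}$ again lands in $C^{\omega}(\Omega)[u_1,u_2,\dots]$, each such coefficient is again a polynomial of the required type; moreover no $\partial/\partial u^{\beta}$ term in a ``base'' direction is produced. Hence $[X,Y]\in\mathcal{L}$. The Jacobi identity is inherited from the ambient Lie algebra of all $\mathbb{K}$-linear derivations, and $C^{\omega}(\Omega)$-linearity of the module structure is immediate.

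The heart of the matter is the identity itself. I would invoke the standard Leibniz-type formula for derivations on a commutative algebra,
$$
[fX,gY]=fg\,[X,Y]+f\,(Xg)\,Y-g\,(Yf)\,X,
$$
and then observe that every operator in $\mathcal{L}$ annihilates $C^{\omega}(\Omega)$: since $X$ is a combination of the $\partial/\partial u_k^{\alpha}$ with $k\ge 1$, and $f(u),g(u)$ depend only on the base variables $u=(u^1,\dots,u^n)$ (none of which is an $u_k^{\alpha}$ with $k\ge1$), we have $Xg=0$ and $Yf=0$. Substituting this into the formula collapses it to $[fX,gY]=fg[X,Y]$, which is exactly the assertion. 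Equivalently, the anchor map $\mathcal{L}\to\mathrm{Der}_{\mathbb{K}}(C^{\omega}(\Omega))$ of the Lie--Rinehart structure is the zero map, so all the correction terms of the form $X(a)Y$ in the Lie--Rinehart axioms vanish and the action of $\mathcal{L}$ on $C^{\omega}(\Omega)$ is trivial.

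There is no serious obstacle here; the only points requiring a moment's care are bookkeeping ones. One should note that the a priori infinite sums defining $X$ and $Y$ cause no trouble when forming $[X,Y]$, because each polynomial $P_k^{\beta}$, $Q_k^{\beta}$ involves only finitely many of the variables $u_j^{\alpha}$, so each coefficient $X(Q_k^{\beta})-Y(P_k^{\beta})$ of the bracket is a finite sum. I would also state prominently that the decisive hypothesis is the absence of any term $\sum_{\alpha}R^{\alpha}(u;u_1,u_2,\dots)\,\partial/\partial u^{\alpha}$ in (\ref{vector_fields}): this is precisely what forces the action on $C^{\omega}(\Omega)$ to be trivial, and hence what distinguishes $\mathcal{L}$ from the full Lie--Rinehart algebra of all polynomial vector fields in the variables $u,u_1,u_2,\dots$.
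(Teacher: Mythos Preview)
Your argument is correct. The paper does not actually supply a proof of this proposition; it is stated as a self-evident remark, and the displayed identity is simply asserted. Your verification via the Leibniz formula $[fX,gY]=fg[X,Y]+f(Xg)Y-g(Yf)X$ together with the observation that operators in $\mathcal{L}$ involve only $\partial/\partial u_k^{\alpha}$ with $k\ge1$ and hence annihilate functions of $u$ alone is exactly the intended one-line justification, spelled out in full.
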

 \begin{remark}
We have already said in the Introduction that there does not seem to exist a canonical definition of the characteristic Lie algebra of a hyperbolic PDE.
To all appearances, the characteristic Lie algebra $\chi(f)$ of a hyperbolic equation $u_{xy}=f(u)$ with additional structure of a $({\mathbb K}, C^{\omega}(\Omega)$-Lie algebra with trivial $\chi(f)$-action on $C^{\omega}(\Omega)$ is called the characteristic Lie ring of $u_{xy}=f(u)$ in a series of papers \cite{ZMHSh, ZM, Sakieva} et al. Linear dependence or independence of vector fields, the choice of basis in the characteristic Lie algebra $\chi(f)$ is understood in \cite{ZMHSh, ZM, Sakieva} with respect to the  left module structure over the localization of $C^{\omega}(\Omega)$.

In \cite{ShYa}, one of the very first and key papers on the characteristic Lie algebras  of hyperbolic systems of PDE,  vector fields are considered for some fixed value $u_M$ of the variables $u=(u^1,\dots,u^n)$. 

More precisely, let $M=(u^1_M,\dots, u^n_M)=u_M$ be a fixed  point in $\Omega$. One can consider an evaluation map
$ev: {\mathcal L} \to {\mathcal L}$ defined by
$$
X=\sum_{k=1}^{+\infty}P_k^{\alpha}(u;u_1,u_2,\dots)\frac{\partial}{\partial u^{\alpha}_k}
 \xrightarrow{\text{ev}_M}
X_{M}= 
\sum_{k=1}^{+\infty}P_k^{\alpha}(u_M;u_1,u_2,\dots)\frac{\partial}{\partial u^{\alpha}_k}.
$$
Sometimes by characteristic Lie algebra $\chi(f)$ of a hyperbolic equation $u_{xy}=f(u)$
is called the image $ev_M(\chi(f))$ of the evaluation map $ev_M$ for some choice  of a point $M \in \Omega$ \cite{ShYa}. Thus, the Lie algebra $ev_M(\chi(f))$ consists of first order linear differential operators $\sum_{k=1}^{+\infty}P_k\frac{\partial}{\partial u_k}$ with coefficients 
$P_k$ taken from the standard polynomial ring ${\mathbb C}[u_1,u_2,\dots]$ \cite{ShYa}. 
\end{remark}

Consider commuting operators $\frac{\partial}{\partial u^j}, j=1,\dots, n$. The following formulas are valid
$$
\left[\frac{\partial}{\partial u^j}, X\right]=\left[\frac{\partial}{\partial u^j},\sum_{k=1}^{+\infty}P_k^{\alpha}(u;u_1,u_2,\dots)\frac{\partial}{\partial u^{\alpha}_k}\right]=
\sum_{k=1}^{+\infty}\frac{\partial P_k^{\alpha}(u;u_1,u_2,\dots)}{\partial u^j}\frac{\partial}{\partial u^{\alpha}_k}
$$

Consider an operator $D: C^{\omega}(\Omega)[u_1,u_2,\dots] \to C^{\omega}(\Omega)(\Omega)[u_1,u_2,\dots]$. 
\begin{equation}
\label{operator_D}
D=u^{\alpha}_1\frac{\partial}{\partial u^{\alpha}}+u^{\alpha}_2\frac{\partial}{\partial u_1^{\alpha}}+u^{\alpha}_3\frac{\partial}{\partial u_2^{\alpha}}+\dots+u^{\alpha}_{k+1}\frac{\partial}{\partial u_k^{\alpha}}+\dots,
\end{equation}

The operator $D$ is called the operator of the full partial derivative $\frac{\partial }{\partial x}$. 
The definition of the operator $D$ has a formal algebraic meaning, but the formula (\ref{operator_D}) defining it has a completely concrete analytic origin.
 Indeed, consider a solution $u(x,y)=(u^1(x,y),\dots, u^n(x,y))$  of the system (\ref{hyperbolic_s}). Let $g^j(u;u_1,u_2,\dots) \in C^{\omega}(\Omega)[u_1,u_2,\dots], j=1,\dots,n$.  Define with a help of  $u(x,y)$  a composite function $g (x,y)=(g^1(x,y),\dots,g^n(x,y))$ of two arguments $x,y$:
$$
g^j(x,y)=g^j(u(x,y); u^1_x(x,y),\dots,u^n_x(x,y),u^1_{xx}(x,y),\dots,u^n_{xx}(x,y),\dots)
$$
In other words, we have a parametrization $u^{\alpha}_j=\frac{\partial^j u^{\alpha}}{\partial x^j}, \alpha=1,\dots,n, j \ge 1$.
$$
(u^1_1,\dots,u^n_1)=(u^1_x,\dots,u^n_x), (u^1_2,\dots,u^n_2)=(u^1_{xx},\dots,u^n_{xx}), \dots, 
$$
In particular we have obvious formulas
$$
\frac{\partial u^{\alpha}}{\partial x}=D(u^{\alpha})=u_{1}^{\alpha}, \frac{\partial u_k^{\alpha}}{\partial x}=D(u_k^{\alpha})=u_{k+1}^{\alpha}, \; \alpha=1,\dots,n, k \ge 1.
$$
Computing the partial derivative $\frac{\partial  g^j}{\partial x}$ of the composite function $g^j(x,y)$, we obtain
$$
\frac{\partial  g^j}{\partial x}=\frac{\partial u^{\alpha}}{\partial x}\frac{\partial  g^j}{\partial u^{\alpha}}+\frac{\partial u_1^{\alpha}}{\partial x}\frac{\partial  g^j}{\partial u_1^{\alpha}}+\dots+\frac{\partial u_k^{\alpha}}{\partial x}\frac{\partial g^j} {\partial u_k^{\alpha}}+\dots=u^{\alpha}_1\frac{\partial g^j}{\partial u^{\alpha}}+u^{\alpha}_2\frac{\partial g^j}{\partial u_1^{\alpha}}+\dots+u^{\alpha}_{k+1}\frac{\partial g^j}{\partial u_k^{\alpha}}+\dots
$$
Similar arguments lead us to the formula for  $\frac{\partial}{\partial y}$ "on solutions of" (\ref{hyperbolic_s})
$$
X(f)=\frac{\partial}{\partial y}=f^{\alpha} \frac{\partial}{\partial u^{\alpha}_1}+D(f^{\alpha}) \frac{\partial}{\partial u^{\alpha}_2}+D^2(f^{\alpha}) \frac{\partial}{\partial u^{\alpha}_3}+\dots+D^{k+1}(f^{\alpha}) \frac{\partial}{\partial u^{\alpha}_k}+\dots
$$
\begin{definition}[\cite{LSmSh, ShYa}]
\label{Char_Lie}
A Lie algebra $\chi(f)$ generated by $n+1$ vector fields
$$
X(f), \frac{\partial}{\partial u^1}, \dots, \frac{\partial}{\partial u^n},
$$
is called characteristic Lie algebra   of  the hyperbolic system (\ref{hyperbolic_s}).
\end{definition}
A linear span of $\frac{\partial}{\partial u^1}, \dots, \frac{\partial}{\partial u^n}$ determines an abelian subalgebra
$\chi_0(f)$ of $\chi(f)$. One can easily verify the followng commutation relations 
$\frac{\partial}{\partial u^j}$ with $X(f)$
$$
\left[ \frac{\partial}{\partial u^j}, X(f)\right]=X\left( \frac{\partial f}{\partial u^j}\right)=\sum_{k=1}^{+\infty} D^{k+1}\left( \frac{\partial f^{\alpha}}{\partial u^j}\right) \frac{\partial }{\partial u^{\alpha}_k}, j=1,\dots, n.
$$
We denote by $\chi_1(f)$ the smallest invariant subspace of $\chi_0$-action on  $\chi(f)$  containing  the operator $X(f)$. The subspace $\chi_1(f)$ coinsides with  the linear span of all operators $X\left(\frac{\partial^s f}{\partial u^{j_1}\dots\partial u^{j_s}}\right), s \ge 0$ and we have
$$
\left[ \chi_0(f), \chi_1(f)\right] = \chi_1(f). 
$$

In this article we are interested mainly in the one-dimensional case $n=1$.  The corresponding scalar PDE is well known and sometimes it is called Klein-Gordon equation \cite{ZhSh1, ZMHSh}.

Indeed, consider a classical Klein-Gordon equation
$$
u_{tt}-u_{zz}=f(u).
$$ 
Making a linear change of variables $x=\frac{z+t}{2}, y=\frac{z-t}{2}$ we'll
get 
\begin{equation}
\label{klein-gordon}
u_{xy}=f(u),
\end{equation}
where we assume that $f(u)$ is a locally analytic function on one variable $u$.
Further in the text we will call by the Klein-Gordon equation the equation in the form 

The operator $D$ of the full derivative with respect to $x$ is 
$$
D=u_1\frac{\partial}{\partial u}+u_2\frac{\partial}{\partial u_1}+u_3\frac{\partial}{\partial u_2}+\dots+u_{n+1}\frac{\partial}{\partial u_n}+\dots ,
$$

We recall  that  $u_i$ are parametrized  by means of some solution $u(x,y)$ of (\ref{klein-gordon}).
$$
u_1=u_x, u_2=u_{xx}, \dots, u_i=\frac{\partial^i u}{ \partial x^i}, \dots
$$
In this case we have
$$
\frac{\partial}{\partial x}(g(u(x,y),u_1(x,y),u_2(x,y),\dots)=D(g(u(x,y),u_1(x,y),u_2(x,y),\dots)).
$$
\begin{example}
It's an elementary exercise to verify by recursion that 
$$
D^k(e^{\lambda u})=e^{\lambda u}B_k(\lambda u_1,\dots,\lambda u_k),
$$  
where 
$$
B_k(\lambda u_1,\dots,\lambda u_k)=u_1^k\lambda^k+\dots+u_k\lambda, \quad k=0,1,2, \dots,
$$ 
are complete Bell polynomials of degree $k$. Complete Bell polynomials are well-known combinatorial object and they have a lot of properties and applications, see \cite{A} for references. We want just to recall only a few basic facts about them. 

Complete Bell polynomials can be defined recursively by the formula
$$
B_{n{+}1}(u_1,u_2, \dots, u_{n+1})=\sum_{i=0}^n \binom{n}{i}B_{n{-}i}(u_1,u_2, \dots, u_{n-i})u_{i+1},
$$
with the  initial condition $B_0=1$. 

The first few complete Bell polynomials are:
$$
\begin{array}{l}
B_1(u_1)=u_1, B_2(u_1,u_2)=u_1^2+u_2, 
B_3(u_1,u_2,u_3)=u_1^3+3u_1u_2+u_3,\\
B_4(u_1,u_2,u_3,u_4)=u_1^4+6u_1^2u_2+4u_1u_3+3u_2^2+u_4, \dots
\end{array}
$$
The generating function for complete Bell polynomials is 
$$
\exp{\left( \sum_{i=1}^{+\infty} u_i\frac{t^i}{i!}\right)}=\sum_{n=0}^{+\infty}B_n(u_1,\dots,u_n)\frac{t^n}{n!}.
$$
\end{example}
A one-dimensional version of the general Definition \ref{Char_Lie} is 
\begin{definition}
The characterisitic Lie algebra $\chi(f)$ of Klein-Gordon equation (\ref{klein-gordon}) is a Lie algebra of vector fields generated by two vector fields $X_0$ and $X_1$
$$
X_0=\frac{\partial}{\partial u}, 
X_1=X(f)=f\frac{\partial}{\partial u_1}+D(f)\frac{\partial}{\partial u_2}+\dots+D^{n-1}(f)\frac{\partial}{\partial u_n}+\dots
$$
\end{definition}
It is an elementary exercise to express $D^{k}(f)$ for an arbitrary analytic $f$ in terms of complete differential Bell polynomials $B_n(u_1\frac{d}{du},\dots,u_n\frac{d}{du})$, i.e. 
$$D^n(f)=B_n(u_1\frac{d}{du},\dots,u_n\frac{d}{du})(f), $$
where first four differential Bell polynomials are
$$
\begin{array}{l}
B_0=1, B_1(u_1\frac{d}{du})=u_1\frac{d}{du}, B_2(u_1\frac{d}{du},u_2\frac{d}{du})=u_1^2\frac{d^2}{du^2}+u_2\frac{d}{du}, \\
B_3(u_1\frac{d}{du},u_2\frac{d}{du},u_3\frac{d}{du})=u_1^3\frac{d^3}{du^3}+3u_1u_2\frac{d^2}{du^2}+u_3\frac{d}{du},\\
B_4(u_1\frac{d}{du},u_2\frac{d}{du},u_3\frac{d}{du},u_4\frac{d}{du})=u_1^4\frac{d^4}{du^4}{+}6u_1^2u_2\frac{d^3}{du^3}{+}(4u_1u_3{+}3u_2^2)\frac{d^2}{du^2}{+}u_4\frac{d}{du}.
\end{array}
$$
One can express $D^{n}(f)$ in terms of incomplete (partial) Bell polynomials $B_{n,k}(u_1, \dots,u_{n-k+1})$ (see \cite{A} for references)
$$
D^{n}(f)=\sum_{k=1}^{n}B_{n,k}(u_1, \dots,u_{n-k+1})\frac{d^k f}{dx^k}.
$$
We recall here only the generating function for incomplete Bell polynomials
$$
\exp{\left( z\sum_{i=1}^{+\infty} u_i\frac{t^i}{i!}\right)}=\sum_{n,k\ge 0} B_{n,k}(u_1,\dots,u_{n-k+1})z^k\frac{t^n}{n!}.
$$
and the relationship between the complete and incomplete polynomials
$$
B_{n}(u_1,\dots,u_{n})=\sum_{k= 1}^n B_{n,k}(u_1,\dots,u_{n-k+1})
$$
We denote  by $Y_1$ the commutator
$$
Y_1=[X_0, X_1]=f_u\frac{\partial}{\partial u_1}+D(f_u)\frac{\partial}{\partial u_2}+\dots+D^{n-1}(f_u)\frac{\partial}{\partial u_n}+\dots
$$
We have also
$$
[X_0, Y_1]=\left[X_0,[X_0,X_1] \right]=f_{uu}\frac{\partial}{\partial u_1}+D(f_{uu})\frac{\partial}{\partial u_2}+\dots+D^{n-1}(f_{uu})\frac{\partial}{\partial u_n}+\dots
$$
\begin{example}
Let $f(u)=e^u$. We have the Liouville equation $u_{xy}=e^u$. It follows that  $[X_0, X_1]=X_1$ in this case.
Hence the characteristic Lie algebra   $\chi(e^u)$ of the Liuoville equation 
is the non-abelian two-dimensional solvable Lie algebra. It can be defined by its basis $X_0, X_1$ and the  unique commutation relation
$$
[X_0, X_1]=X_1.
$$
\end{example}

We have already noted that the implication of the canonical definition of the characteristic Lie algebra is related to its auxiliary, albeit very important, role in the search for integrals and higher symmetries of those hyperbolic equations by which they are constructed.

\begin{definition}
A (locally) analytic function $w(u;u_1,\dots,u_n)$ is called $x$-integral of PDE $u_{xy}=f(u)$ if 
\begin{equation}
\label{char_equation}
\frac{\partial }{\partial y}w\left(u,u_x,u_{xx},\dots, \frac{\partial^n u}{\partial x^n}\right)=0,
\end{equation}
where $u(x,y)$ is a solution of $u_{xy}=f(u)$.
\end{definition}
Respectively a (locally) analytic function $w(u_1,\dots,u_n)$ is called $y$-integral of PDE $u_{xy}=f(u)$ if
$$
\frac{\partial }{\partial x}w\left(u, u_y,u_{yy},\dots, \frac{\partial^n u}{\partial y^n}\right)=0, \; u_{xy}=f(u).
$$
Evidently in our symmetric case $u_{xy}=f(u)$ a $x$-integral $w$ defines 
a $y$-integral and vise versa.
The equation (\ref{char_equation}) can be written now as 
$$
u_1\frac{\partial w}{\partial u}+X(f)w=0.
$$
and it is equivalent to the system
$$
\frac{\partial w}{\partial u}=0, X(f)w=0.
$$
In other words a $x$-integral $w$ is annihilated by two generators 
 $\frac{\partial}{\partial u}, X(f)$ of characteristic Lie algebra $\chi(f)$ and hence it is annihilated by the whole Lie algebra  $\chi(f)$.

\begin{example}
A second order polynomial $w_2(u_1, u_{2})=\frac{1}{2}(u_1)^2-u_{2}$ determines both $x$-, $y$-integrals  of the Liouville equation $u_{xy}=e^u$. 
$$
X(e^u)w_2(u_1,u_2)=e^u\left(\frac{\partial}{\partial u_1}+u_1\frac{\partial}{\partial u_2}\right)\left( \frac{1}{2}(u_1)^2-u_{2}\right)=0.
$$
Or one can verify directly that $w_2(u_x, u_{xx})=\frac{1}{2}(u_x)^2-u_{xx}$ is a $x$-integral 
$$
((u_x)^2-2u_{xx})_y=2u_xu_{xy}-2u_{xyx}=2e^u(u_x-u_x)=0.
$$
\end{example}
\begin{definition}
A hyperbolic one-dimensional PDE $u_{xy}=f(u)$ is called Darboux-integrable if it admits both non-trivial $x$-, $y$-integrals.
\end{definition}
Obviously the Liuoville equation $u_{xy}=e^u$ is Darboux-integrable. Moreover there is a well-known  classical formula found by Liuoville himself for its general solution in terms of two arbitrary functions $\varphi(t), \psi(t)$ of one variable $t$
$$
u(x,y)=\log{\frac{2\varphi'(x)\psi'(y)}{(1-\varphi(x)\psi(y))^2}}.
$$
Technical details of a transition from the formulas for $x,y$-integrals of the Liuoville equation to this explicit expression for $u(x,y)$ can be found in  \cite{I, Crowdy}.

Consider sinh-Gordon equation $u_{xy}=\sinh {u}$. It is well-known that it is not Darboux-integrable but it is integrable by inverse scattering problem method (see \cite{ZhSh1, ZhS} for references). In the framework of inverse scattering method 
one is looking for higher symmetries of the non-linear PDE under the study. We will not discuss details and remark only that we are looking now for non-trivial solutions of so-called defining equation
\begin{equation}
\label{def_equation}
DX(f)\phi=f'(u)\phi.
\end{equation}
\begin{example}
A polynomial 
$
\phi_3(u_1,u_2,u_3)=u_3-\frac{1}{2}u_1^3
$
is a solution of the defining equation (\ref{def_equation}) for the sinh-Gordon equation $u_{xy}=\sinh{u}$. It is not difficult to verify that  for a function $u(x,y)$ satisfying the sinh-Gordon equation
$u_{xy}=\sinh{u}$ we have
$$
(u_{xxx}-\frac{1}{2}u_x^3)_{xy}=\cosh{u}(u_{xxx}-\frac{1}{2}u_x^3).
$$
\end{example}
A method was developed in \cite{ZMHSh} that, with the help of operators from the characteristic Lie algebra $\chi(\sinh)$, to obtain all higher symmetries of the sinh-Gordon equation. 

We finish this section with one simple technical lemma, which we will need in the aequel.
\begin{lemma}[\cite{ZMHSh}]
\label{lemma1}
Let $X$ be a differential operator
$$
X=\sum_{i=1}^{+\infty}P_i\frac{\partial}{\partial u_i}, \; P_i=P_i(u,u_1,\dots, u_n,\dots),
$$
 such that 
$
[X,D]=0.
$
Then  $X=0$.
\end{lemma}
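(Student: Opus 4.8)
The plan is to compute the commutator $[X,D]$ directly and to read off from the equation $[X,D]=0$ a recursion that forces every coefficient $P_i$ to vanish. Write $D=\sum_{k\ge 0}u_{k+1}\frac{\partial}{\partial u_k}$, with the convention $u_0=u$, and apply $XD-DX$ to an arbitrary polynomial $g\in C^{\omega}(\Omega)[u_1,u_2,\dots]$. Since $X$ is a derivation, $X(u_{k+1})=P_{k+1}$; using this together with the equality of the mixed second derivatives $\frac{\partial^2}{\partial u_i\,\partial u_k}$, all second-order contributions cancel, so $[X,D]$ is again a first-order operator of the type considered in the lemma. Splitting off the term containing $\frac{\partial}{\partial u}$ and collecting the coefficients of $\frac{\partial}{\partial u_j}$ for $j\ge 1$, one gets the clean formula
$$
[X,D]=P_1\frac{\partial}{\partial u}+\sum_{j\ge 1}\bigl(P_{j+1}-D(P_j)\bigr)\frac{\partial}{\partial u_j}.
$$

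Next I would invoke the linear independence of the derivations $\frac{\partial}{\partial u},\frac{\partial}{\partial u_1},\frac{\partial}{\partial u_2},\dots$ over the coefficient ring $C^{\omega}(\Omega)[u_1,u_2,\dots]$: evaluating a vanishing $C^{\omega}(\Omega)[u_1,u_2,\dots]$-linear combination of them successively on $u,u_1,u_2,\dots$ shows that every coefficient must be zero. Applying this to $[X,D]=0$ yields
$$
P_1=0,\qquad P_{j+1}=D(P_j)\quad(j\ge 1).
$$
As $D(0)=0$, an immediate induction on $j$ gives $P_j=0$ for all $j\ge1$, that is $X=0$, which proves the lemma.

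The computation is short and I do not expect a genuine obstacle; the one point that must not be glossed over is the appearance of the term $P_1\frac{\partial}{\partial u}$ in $[X,D]$. It is there precisely because $X$ has no $\frac{\partial}{\partial u}$-component while $D$ does, and it is exactly this term that produces the relation $P_1=0$ and hence the base case of the induction — without it one would be left with the non-trivial solutions $P_j=D^{j-1}(P_1)$. The remaining care is purely formal: checking that the infinite sums act in a locally finite way on any fixed polynomial (so the operators and their commutator are well defined) and keeping the index bookkeeping straight when splitting off the $\frac{\partial}{\partial u}$-term.
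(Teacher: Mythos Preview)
Your proof is correct and follows essentially the same route as the paper's: both compute the commutator explicitly, obtain $P_1=0$ together with the recursion $P_{j+1}=D(P_j)$, and conclude $P_j\equiv 0$ by induction. Your write-up is a bit more careful about justifying the linear independence of the $\frac{\partial}{\partial u_j}$ and the role of the $\frac{\partial}{\partial u}$-term, but the argument is the same one the paper gives.
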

\begin{proof}
The proof from \cite{ZMHSh} is quite elementary and we present it here.
$$
[D,X]=\sum_{i=1}^{+\infty}D(P_i)\frac{\partial}{\partial u_i}- P_1\frac{\partial}{\partial u}-\sum_{i=1}^{+\infty}P_{i+1}\frac{\partial}{\partial u_i}.
$$
It follows that if $[X,D]=0$ then
$$
P_1\equiv0, D(P_i)=P_{i+1}, i=1,2,\dots,n,\dots
$$
It means that all polynomials $P_i$ have to vanish, i.e.
$
P_i\equiv 0, \forall i \ge 1.
$
\end{proof}
\begin{corollary}
\begin{equation}
\label{D[]}
[D,X_1]=\sum_{i=1}^{+\infty}D(D^{i-1}(f))\frac{\partial}{\partial u_i}- f\frac{\partial}{\partial u}-\sum_{i=1}^{+\infty}D^{i}(f)\frac{\partial}{\partial u_i}=-fX_0.
\end{equation}
\end{corollary}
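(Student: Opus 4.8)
The plan is to obtain (\ref{D[]}) as an immediate specialization of the commutator identity established inside the proof of Lemma \ref{lemma1}. First I would recall that for an arbitrary operator $X=\sum_{i\ge1}P_i\frac{\partial}{\partial u_i}$ the computation in that proof gives
$$
[D,X]=\sum_{i=1}^{+\infty}D(P_i)\frac{\partial}{\partial u_i}-P_1\frac{\partial}{\partial u}-\sum_{i=1}^{+\infty}P_{i+1}\frac{\partial}{\partial u_i},
$$
the key point being that $D=u_1\frac{\partial}{\partial u}+\sum_{k\ge1}u_{k+1}\frac{\partial}{\partial u_k}$ contributes, besides the ``transport'' terms $D(P_i)\frac{\partial}{\partial u_i}$, only the extra terms coming from $[D,\frac{\partial}{\partial u_i}]=-\frac{\partial}{\partial u_{i-1}}$ (with the convention $u_0:=u$), since $u_i$ occurs in $D$ exactly once, in the summand $u_i\frac{\partial}{\partial u_{i-1}}$.

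Next I would apply this to $X=X_1=X(f)$, whose coefficients are $P_i=D^{i-1}(f)$, $i\ge1$, so that $P_1=D^0(f)=f$. Since $D(P_i)=D\bigl(D^{i-1}(f)\bigr)=D^{i}(f)=P_{i+1}$, the first and third sums above coincide and cancel term by term, leaving only $-P_1\frac{\partial}{\partial u}=-f\frac{\partial}{\partial u}=-fX_0$. Written out without the cancellation this is exactly the displayed chain of equalities in (\ref{D[]}), so the corollary follows.

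There is essentially no obstacle here: the statement is a one-step substitution into an already proven formula, and the only thing requiring a little care is the index bookkeeping (the convention $u_0=u$ and the reindexing $\sum_{i\ge2}P_i\frac{\partial}{\partial u_{i-1}}=\sum_{i\ge1}P_{i+1}\frac{\partial}{\partial u_i}$). If one prefers not to invoke the proof of Lemma \ref{lemma1}, the same two lines can be reproduced directly by expanding $[D,X_1]=D\circ X_1-X_1\circ D$ and collecting coefficients of each $\frac{\partial}{\partial u_i}$.
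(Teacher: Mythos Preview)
Your proposal is correct and is exactly the approach implicit in the paper: the corollary is stated without proof precisely because it is the immediate specialization $P_i=D^{i-1}(f)$ of the commutator formula displayed in the proof of Lemma~\ref{lemma1}, with the cancellation $D(P_i)=P_{i+1}$ that you point out.
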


\section{Narrow positively graded Lie algebras and loop algebras}

\begin{definition}
A Lie algebra ${\mathfrak g}$ is called ${\mathbb N}$-graded (positively graded) if there is a decomposition of ${\mathfrak g}$ into
a direct sum of linear subspaces 
$$
{\mathfrak g}{=}\bigoplus_{i=1}^{+\infty}{\mathfrak g}_i, \; [{\mathfrak g}_i, {\mathfrak g}_j] \subset  {\mathfrak g}_{i{+}j}, \; {\rm for\; all\;} i,j \in  {\mathbb N}.
$$
\end{definition}

\begin{example}
Let ${\mathfrak g}$ be a finite-dimensional simple Lie algebra over ${\mathbb K}$. 
Then the Lie algebra $L_+({\mathfrak g})=\oplus_{k=1}^{+\infty} {\mathfrak g} \otimes t^k $
with a bracket $[,]_L$ defined 
by 
$$
[g\otimes P(t), h \otimes Q(t)]_{L}=[g,h]\otimes P(t)Q(t),
$$
where $[,]$ is the Lie bracket in  ${\mathfrak g}$ is ${\mathbb N}$-graded and dimensions of all its homogeneous components are equal to $\dim{\mathfrak g}$. $L_+({\mathfrak g})$ can be regarded as the positive part of the corresponding  ${\mathbb Z}$-graded loop algebra $L({\mathfrak g})=\oplus_{k \in \mathbb Z} {\mathfrak g} \otimes t^k$. 
\end{example}

\begin{definition}[\cite{ShZ}]
A ${\mathbb N}$-graded Lie algebra ${\mathfrak g}$ is called of width $d$ if all its homogeneous components
is uniformly bounded by $d \ge 1$.
\begin{equation}
\label{width}
\dim{{\mathfrak g}_i} \le d, \forall i \in {\mathbb N},
\end{equation}
where the constant $d$ is the smallest with the property (\ref{width}).
\end{definition}
Shalev and Zelmanov introduced a notion of {\it narrow} Lie algebra, i.e.  a ${\mathbb N}$-graded Lie algebra ${\mathfrak g}=\oplus_{i \in {\mathbb N}}{\mathfrak g}_i$  of width $d=1$ or $d=2$.
\begin{example}
The Lie algebra $\mathfrak{m}_0$ is defined
by its infinite basis $e_1, e_2, \dots, e_n, \dots $
with the commutation relations:
$$ \label{m_0}
[e_1,e_i]=e_{i+1}, \; \forall \; i \ge 2.$$
The remaining brackets among basis elements vanish: $[e_i,e_j]=0$ if $i,j \ne 1$.
\end{example}

We will always omit the trivial commutator
relations $[e_i,e_j]=0$ in the definitions of Lie algebras.

\begin{example}
The Lie algebra $\mathfrak{m}_2$ is defined by its
infinite basis $e_1, e_2, \dots, e_n, \dots $
commutating relations:
$$
[e_1, e_i ]=e_{i+1}, \quad \forall \; i \ge 2; \quad \quad
[e_2, e_j ]=e_{j+2}, \quad \forall \; j \ge 3.
$$
\end{example}

\begin{example}
The positive part $W^+$ of the Witt algebra.
It can be also defined by its infinite basis and  commutating relations 
$$
[e_i,e_j]= (j-i)e_{i{+}j}, \; \forall \;i,j \in {\mathbb N}.
$$
\end{example}

These three infinite-dimensional algebras $\mathfrak{m}_0, \mathfrak{m}_2, W^+$  are the
narrowest possible ${\mathbb N}$-graded Lie algebras. They are all generated by two elements $e_1, e_2$ of gradings one and two respectively. 

\begin{example}
The loop algebra ${\mathcal L}({\mathfrak sl}(2,{\mathbb K}))$ and its positive part ${\mathfrak n}_1$.

Consider the loop algebra ${\mathcal L}({\mathfrak sl}(2,{\mathbb K}))={\mathfrak sl}(2, {\mathbb K})\otimes {\mathbb K}[t,t^{-1}]$, where ${\mathbb K}[t,t^{{-}1}]$ is the ring of Laurent polynomials over ${\mathbb K}$. It has a Lie subalgebra of "polynomial loops"
$$
{\mathcal L}({\mathfrak sl}(2,{\mathbb K}))^{\ge 0}={\mathfrak sl}(2, {\mathbb K})\otimes {\mathbb K}[t]
$$
that we will call in the sequel the non-negative part of the loop algebra ${\mathcal L}({\mathfrak sl}(2,{\mathbb K}))$.
Consider an infinite set of polynomial matrices  defined for 
$k \in {\mathbb Z}$ by
\begin{equation}
\label{basis_n_1}
e_{3k{+}1}{=}\frac{1}{2}\begin{pmatrix} 
0 & t^{k}\\
0&0\\
\end{pmatrix}, e_{3k{-}1}{=}
\begin{pmatrix} 
0 & 0\\
t^{k}&0\\
\end{pmatrix}, 
e_{3k}{=}
\frac{1}{2}\begin{pmatrix} 
t^{k} & 0\\
0&{-}t^{k}\\
\end{pmatrix}. 
\end{equation}
Evidently this set of matrices is an infinite basis of the loop algebra 
${\mathcal L}({\mathfrak sl}(2,{\mathbb K}))$. 

The linear span of its half $\langle e_0, e_1, e_2, e_3, \dots, e_n, \dots \rangle$ is an infinite basis  of the non-positive part
${\mathcal L}({\mathfrak sl}(2,{\mathbb K}))^{\ge 0}={\mathfrak sl}(2, {\mathbb K})\otimes {\mathbb K}[t]$. It is ${\mathbb Z}_{\ge 0}$-graded with one-dimensional homogeneous components:
$$
{\mathcal L}({\mathfrak sl}(2,{\mathbb K}))^{\ge 0}=\oplus_{i=0}^{+\infty}\langle e_i \rangle \subset {\mathfrak sl}(2, {\mathbb K})\otimes {\mathbb K}[t],
$$
The structure relations for basic elements $e_i, e_j, i,j \ge 0,$ are given by the rule
\begin{equation}
\label{n_1}
[e_i,e_j]= c_{i,j}e_{i{+}j}, \; c_{i,j}=\left\{ \begin{array}{c} 1, {\rm if} \; j{-}i \equiv 1 \; {\rm mod} \;3;\\
0, {\rm if} \; j{-}i \equiv 0  \; {\rm mod}\; 3;\\
{-}1, {\rm if} \; j{-}i \equiv{-}1 \; {\rm mod} \;3.
\end{array}\right.
\end{equation}
Now consider the positive part ${\mathfrak n}_1$ of the loop algebra ${\mathcal L}({\mathfrak sl}(2,{\mathbb K}))$. It is defined as the linear span $\langle e_1, e_2, e_3, \dots, e_n, \dots \rangle$ and it is a ${\mathbb N}$-graded Lie algebra with one-dimensional homogeneous components:
$$
{\mathfrak n}_1=\oplus_{i=1}^{+\infty}\langle e_i \rangle \subset {\mathfrak sl}(2, {\mathbb K})\otimes {\mathbb K}[t].
$$
The Lie algebra ${\mathfrak n}_1$ is a codimension one ideal in ${\mathcal L}({\mathfrak sl}(2,{\mathbb K}))^{\ge 0}$.
\end{example}

\begin{example} The twisted loop algebra ${\mathcal L}({\mathfrak sl}(3,{\mathbb K}), \mu)$ and its positive part ${\mathfrak n}_2$.

Consider a diagram automorphism $\mu$ of ${\mathfrak sl}(3,{\mathbb K})$ of the second order $\mu^2=1$ \cite{Kac}.
$$
\mu: \begin{pmatrix} a_{11} & a_{12} & a_{13} \\ a_{21} & a_{22} & a_{23} \\
a_{31} & a_{32} & a_{33}\end{pmatrix} \to \begin{pmatrix} {-}a_{33} & a_{23} & {-}a_{13} \\ a_{32} & {-}a_{22} & a_{12} \\
{-}a_{31} & a_{21} & {-}a_{11}\end{pmatrix}
$$
The simple Lie algebra  ${\mathfrak sl}(3,{\mathbb K})$ is decomposed into the sum of eigensubspaces ${\mathfrak g}_0, {\mathfrak g}_1$ of $\mu$ coressponding to eigenvalues $1, {-}1$ respectively
$$
{\mathfrak sl}(3,{\mathbb K})={\mathfrak g}_0\oplus {\mathfrak g}_1, 
 [{\mathfrak g}_0,{\mathfrak g}_0] \subset {\mathfrak g}_0, 
  [{\mathfrak g}_0,{\mathfrak g}_1] \subset {\mathfrak g}_1, 
  [{\mathfrak g}_1,{\mathfrak g}_1] \subset {\mathfrak g}_0.
$$
One can choose a basis $f_{-1}, f_0, f_1, f_2,\dots, f_6$ of ${\mathfrak sl}(3,{\mathbb K})$, such that ${\mathfrak g}_0=\langle f_{-1}, f_0, f_1 \rangle$ and ${\mathfrak g}_1=\langle f_2, f_3, f_4, f_5, f_6 \rangle$
$$
\begin{array}{c}
f_{-1}{=}\begin{pmatrix} 
0 & 0 &0\\
1&0 &0 \\
0& 1 &0\\
\end{pmatrix},
f_{0}{=}\begin{pmatrix} 
1 & 0 &0\\
0&0 &0 \\
0& 0 &{-}1
\end{pmatrix},
f_{1}{=}\begin{pmatrix} 
0 & 1&0\\
0&0 & 1\\
0&0&0\\
\end{pmatrix}, 
f_{2}{=}\begin{pmatrix} 
0 & 0 &0\\
0&0 & 0 \\
1&0&0\\
\end{pmatrix},\\
f_{3}{=}\begin{pmatrix} 
0 & 0 &0\\
1&0 & 0 \\
0&{-}1&0\\
\end{pmatrix},
f_{4}{=}\begin{pmatrix} 
1 & 0 &0\\
0&{-}2 & 0 \\
0&0&1\\
\end{pmatrix},
f_{5}{=}\begin{pmatrix} 
0 & 1 &0\\
0&0 & {-}1 \\
0& 0 &0\\
\end{pmatrix},
f_{6}{=}\begin{pmatrix} 
0 & 0 &1\\
0&0 &0 \\
0& 0 &0\\
\end{pmatrix},
\end{array}
$$
We recall (see \cite{Kac}) that the twisted loop algebra ${\mathcal L}({\mathfrak sl}(3,{\mathbb K}), \mu)$ is a Lie subalgebra of the loop algebra  ${\mathcal L}({\mathfrak sl}(3,{\mathbb K}))$ defined by 
$$
{\mathcal L}({\mathfrak sl}(3,{\mathbb K}), \mu)=\bigoplus_{j\in {\mathbb Z}}{\mathfrak g}_{j ({\rm mod} \; 2)} \otimes t^j. 
$$ 
There is an infinite basis of  ${\mathcal L}({\mathfrak sl}(3,{\mathbb K}), \mu)$  
(see \cite{Kac}, Exercise 8.12). 
\begin{equation}
\label{basis_n_2}
\begin{array}{c}
f_{8k{-}1}{=}f_{-1} \otimes t^{2k}, f_{8k}{=}f_0 \otimes t^{2k},
f_{8k{+}1}{=}f_1\otimes t^{2k}, \\
f_{8k{+}2}{=}f_2 \otimes t^{2k{+}1},
f_{8k{+}3}{=}f_3 \otimes t^{2k{+}1},
f_{8k{+}4}{=}f_4 \otimes t^{2k{+}1},\\
f_{8k{+}5}{=}f_5 \otimes t^{2k{+}1},
f_{8k{+}6}{=}f_6 \otimes t^{2k{+}1}, \; k \in {\mathbb Z}.
\end{array}
\end{equation}

It's easy to calculate the commutators $[f_q,f_l]$ of all these basic elements
\begin{equation}
\label{str_n_2}
[f_q,f_l]= d_{q,l}f_{q{+}l}, \; q,l \in {\mathbb N}.
\end{equation}
where the structure constants $d_{q,l}$ are presented in the Table \ref{structure_const_n_2}.

\begin{table}
\caption{Structure constants for ${\mathfrak n}_2$.}
\label{structure_const_n_2}
\begin{center}
\begin{tabular}{|c|c|c|c|c|c|c|c|c|}
\hline
&&&&&&&&\\[-10pt]
 &$f_{8j}$  &$f_{8j{+}1}$ &$f_{8j{+}2}$&$f_{8j{+}3}$&$f_{8j{+}4}$&$f_{8j{+}5}$&$f_{8j{+}6}$&$f_{8j{+}7}$\\
&&&&&&&&\\[-10pt]
\hline
&&&&&&&&\\[-10pt]
$f_{8i}$ & $0$ &$1$ & ${-}2$ & ${-}1$ &$0$ &$1$ &$2$ &${-}1$\\
&&&&&&&&\\[-10pt]
\hline
&&&&&&&&\\[-10pt]
$f_{8i{+}1}$ & ${-}1$ & $0$ & $1$ & $1$ & ${-}3$ & ${-}2$ & $0$ & $1$\\
&&&&&&&&\\[-10pt]
\hline
&&&&&&&&\\[-10pt]
$f_{8i{+}2}$ & $2$ & ${-}1$ &$0$ &$0$&$0$&$1$&${-}1$&$0$ \\
&&&&&&&&\\[-10pt]
\hline
&&&&&&&&\\[-10pt]
$f_{8i{+}3}$ & $1$ &${-}1$&$0$&$0$&$3$&${-}1$&$1$&${-}2$ \\
&&&&&&&&\\[-10pt]
\hline
&&&&&&&&\\[-10pt]
$f_{8i{+}4}$ & $0$ &$3$&$0$&${-}3$&$0$&$3$&$0$&${-}3$ \\
&&&&&&&&\\[-10pt]
\hline
&&&&&&&&\\[-10pt]
$f_{8i{+}5}$ & ${-}1$ &$2$&${-}1$&$1$&${-}3$&$0$&$0$&${-}1$ \\
&&&&&&&&\\[-10pt]
\hline
&&&&&&&&\\[-10pt]
$f_{8i{+}6}$ & ${-}2$ &$0$&$1$&${-}1$&$0$&$0$&$0$&$1$ \\
&&&&&&&&\\[-10pt]
\hline
&&&&&&&&\\[-10pt]
$f_{8i{+}7}$ & $1$ &${-}1$&$0$&$2$&$3$&$1$&${-}1$&$0$ \\[2pt]
\hline
\end{tabular}
\end{center}
\end{table}
The matrix $(d_{q,l})$ is skew-symmetric, its elements $d_{q,l}$
depend only on the residue generated  by dividing positive integers $q$ and $l$ by $8$. 
Moreover $(d_{q,l})$ satisfy the following relations (see \cite{Kac} for references):
$$
d_{i,j}+d_{q,l}=0, \; {\rm if}\; i{+}q\equiv 0 \; {\rm mod} \; 8, j{+}l \equiv 0 \; {\rm mod} \; 8.
$$
We define the non-negative part ${\mathcal L}({\mathfrak sl}(3,{\mathbb K}), \mu)^{\ge 0}$ of the twisted loop algebra ${\mathcal L}({\mathfrak sl}(3,{\mathbb K}), \mu)$ as
$$
{\mathcal L}({\mathfrak sl}(3,{\mathbb K}), \mu)^{\ge 0}=\bigoplus_{j=0}^{+\infty}{\mathfrak g}_{j ({\rm mod} \; 2)} \otimes t^j. 
$$
It coinsides with 
an infinite-dimensional linear span  $\langle f_0, f_1, f_2, f_3, \dots, f_n, \dots \rangle$.

Now we introduce  the positive part ${\mathfrak n}_2$ of the twisted loop algebra ${\mathcal L}({\mathfrak sl}(3,{\mathbb K}), \mu)$ by setting
$$
{\mathfrak n}_2=\oplus_{i=1}^{+\infty}\langle f_i \rangle=\bigoplus_{j=1}^{+\infty}{\mathfrak g}_{j ({\rm mod} \; 2)} \otimes t^j,
$$
\end{example}
Evidently ${\mathfrak n}_2$ is a ${\mathbb N}$-graded Lie algebra of width one.

Fialowski  classified \cite{Fial} the narrowest ${\mathbb N}$-graded Lie algebras, i.e. ${\mathbb N}$-graded Lie algebras ${\mathfrak g}=\oplus_{i{\in}{\mathbb N}} {\mathfrak g}_i$ with one-dimensional homogeneous components  ${\mathfrak g}_i$ that are  generated by two elements from  ${\mathfrak g}_1$ and ${\mathfrak g}_2$ respectively. Fialowski's classification list contains the Lie algebras ${\mathfrak m}_0$,  ${\mathfrak m}_2$, $W^+$,  ${\mathfrak n}_1$, ${\mathfrak n}_2$ considered above and a special multiparametric family of pairwise non-isomorphic Lie algebras. Later a part of Fialowski's theorem was rediscovered by Shalev and Zelmanov \cite{ShZ}.

\section{Naturally graded pro-nilpotent Lie algebras.}
\begin{definition}
\label{pro-nilpotent}
A  Lie algebra ${\mathfrak g}$ is called pro-nilpotent  if for the ideals  ${\mathfrak g}^i$, ${\mathfrak g}={\mathfrak g}^1$, ${\mathfrak g}^i=[{\mathfrak g},{\mathfrak g}^{i-1}], i\ge 2$,
of its descending central sequence 
we have:
$$
\begin{array}{c}
\cap_{i=1}^{+\infty}{\mathfrak g}^i=\{0\}, \;
\dim{{\mathfrak g}/{\mathfrak g}}^i <+\infty.
\end{array}
$$
\end{definition}
It is clear that a finite-dimensional nilpotent Lie algebra ${\mathfrak g}$ is pro-nilpotent. Moreover,
it follows from the definition \ref{pro-nilpotent} that every quotient ${\mathfrak g}/{\mathfrak g}^i$ of a pro-nilpotent Lie algebra is finite-dimensional nilpotent Lie algebra and there is an inverse spectre of  finite-dimensional nilpotent Lie algebras 
$$
 \dots \stackrel{p_{k{+}2,k{+}1}}{\longrightarrow}{\mathfrak g}/{\mathfrak g}^{k{+}1}\stackrel{p_{k{+}1,k}}{\longrightarrow}{\mathfrak g}/
{\mathfrak g}^{k} \stackrel{p_{k,k{-}1}}{\longrightarrow}
\dots \stackrel{p_{3,2}}{\longrightarrow} {\mathfrak g}/{\mathfrak g}^2 \stackrel{p_{2,1}}{\longrightarrow}{\mathfrak g}/{\mathfrak g}^1,
$$
We denote by $\widehat{\mathfrak g}$ the projective (inverse) limit $\widehat{\mathfrak g}=\varprojlim \limits_{k} {\mathfrak g}/
{\mathfrak g}^k$. We call ${\mathfrak g}$ complete if $\widehat{\mathfrak g}={\mathfrak g}$ (${\mathfrak g}=\widehat{\mathfrak g}$ is an inverse limit of finite-dimensional nilpotent Lie algebras).

For a given pro-nilpotent Lie algebra ${\mathfrak g}$ one can consider a sequence of projections of a pro-nilpotent Lie algebra  ${\mathfrak g}$ to its finite-dimensional quotients:
$$
p_m: {\mathfrak g} \to  {\mathfrak g}/{\mathfrak g}^m, \; m \in {\mathbb N}.
$$
They determine the topology of the inverse limit of finite-dimensional spaces on ${\mathfrak g}$, i.e., smallest topology on  ${\mathfrak g}$ for which all these maps $p_m$ are continuous.
\begin{example}
We have considered three infinite-dimensional ${\mathbb N}$-graded Lie algebras
$$
{\mathfrak m}_0, \;{\mathfrak m}_2, \; W^+.
$$
All of them are pro-nilpotent and not complete.
Their completions $\widehat{\mathfrak m}_0, \widehat{\mathfrak m}_2, \widehat W^+$ are the spaces of formal series $\sum_{k=1}^{+\infty}\alpha_k e_k$ of corresponding basic vectors $e_k, k \in {\mathbb N}$.
\end{example}
\begin{definition}
\label{pro-solvable}
A  Lie algebra ${\mathfrak g}$ is called pro-solvable  if for the ideals  ${\mathfrak g}^{(i)}$, ${\mathfrak g}={\mathfrak g}^{(0)}$, ${\mathfrak g}^{(i)}=[{\mathfrak g}^{(i-1)},{\mathfrak g}^{(i-1)}], i\ge 1$,
of its derived sequence of ideals 
we have:
$$
\begin{array}{c}
\cap_{i=1}^{+\infty}{\mathfrak g}^{(i)}=\{0\}, \;
\dim{{\mathfrak g}/{\mathfrak g}}^{(i)} <+\infty.
\end{array}
$$
\end{definition}
The descending central series $\{\mathfrak{g}^k\}$ of a pro-nilpotent Lie algebra ${\mathfrak g}$ determines
a decreasing filtration 
$$
\mathfrak{g}= \mathfrak{g}^1 \supset \mathfrak{g}^2{=}[\mathfrak{g},\mathfrak{g}] \dots \supset \mathfrak{g}^m \supset \mathfrak{g}^{m+1} \supset \dots,\;  [ \mathfrak{g}^m, \mathfrak{g}^n] \subset  \mathfrak{g}^{m+n}, \; m,n \in {\mathbb N}.
$$
and 
one can consider the associated graded Lie 	algebra ${\rm gr}_C \mathfrak{g}$
$${\rm gr}_C \mathfrak{g}=\oplus_{i=1}^{+\infty}\left({\rm gr}_C \mathfrak{g}\right)_i=\oplus_{i=1}^{+\infty}  \left( \mathfrak{g}^i /\mathfrak{g}^{i{+}1}\right)$$ 
with the  bracket defined on its homogeneous components $\left({\rm gr}_C {\mathfrak g}\right)_i, \left({\rm gr}_C {\mathfrak g}\right)_j$ by 
$$ \left[x{+}\mathfrak{g}^{i{+}1}, y{+}\mathfrak{g}^{j{+}1}\right]=[x,y]{+}\mathfrak{g}^{i{+}j{+}1}, x\in \mathfrak{g}^i , y\in \mathfrak{g}^j.$$

\begin{definition}
A pro-nilpotent Lie algebra $\mathfrak{g}$ is  called naturally graduable if it is isomorphic to its associated graded
${\rm gr}_C \mathfrak{g}$.
\end{definition}
\begin{definition}
A ${\mathbb N}$-grading $\mathfrak{g}=\oplus_{i=1}^{+\infty}{\mathfrak g}_i$ of a naturally graduable pro-nilpotent Lie algebra $\mathfrak{g}$ is  called natural grading if there exist a graded isomorphism 
$$
\varphi: {\rm gr}_C \mathfrak{g} \to \mathfrak{g}, \; \varphi(({\rm gr}_C \mathfrak{g})_i)={\mathfrak g}_i, \; 
i \in {\mathbb N}.
$$
\end{definition}

The Lie algebra $\mathfrak{m}_0$ considered above
is naturally graduable. However its grading of width one considered above is not natural 
$$
\left( {\rm gr}_Cm_0\right)_1=\langle e_1, e_2\rangle, 
\left( {\rm gr}_C m_0\right)_i=\langle e_{i{+}1}\rangle, i \ge 2.
$$

The positive part $W^+$ of the Witt algebra and $\mathfrak{m}_2$ are not naturally graded Lie algebras,
one can easily verify the following isomorphisms:
$$ {\rm gr}_C\mathfrak{m}_2 \cong
{\rm gr}_C W^+ \cong {\rm gr}_C\mathfrak{m}_0 \cong
\mathfrak{m}_0.$$

\begin{definition}
A ${\mathbb N}$-graded Lie algebra ${\mathfrak g}=\oplus_{i=1}^{+\infty}{\mathfrak g}_i$ is naturally graded if and only if
$
[{\mathfrak g}_1,{\mathfrak g}_i]={\mathfrak g}_{i{+}1}, i \in {\mathbb N}.
$
\end{definition}
In particular it means that a naturally graded Lie algebra ${\mathfrak g}=\oplus_{i=1}^{+\infty}{\mathfrak g}_i$ is generated
by  its first homogeneous component ${\mathfrak g}_1$. The equivalence of two different definitions of a naturally graded Lie algebra follows from the basic properties of the descending central series of a Lie algebra.

The notion of naturally graded Lie algebra is the infinite-dimensional generalization of so-called Carnot algebra.
\begin{definition}[\cite{AgrM}]
A finite-dimensional Lie algebra ${\mathfrak g}$ is called Carnot algebra if it admits a ${\mathbb N}$-grading 
${\mathfrak g}=\oplus_{i=1}^n{\mathfrak g}_i$ such that
\begin{equation}
\label{carnot}
[{\mathfrak g}_1,{\mathfrak g}_i]={\mathfrak g}_{i{+}1}, i=1,2,\dots,n-1, \; [{\mathfrak g}_1,{\mathfrak g}_n]=0.
\end{equation}
\end{definition}

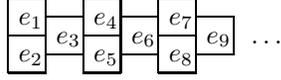
\begin{figure}[tb]
\begin{picture}(112,50)(-33,-5)
\multiput(0,0)(5,0){2}{\line(0,1){10}}
  \multiput(0,0)(0,5){3}{\line(1,0){5}}
  \multiput(10,2.5)(5,0){1}{\line(0,1){5}}
  \multiput(5,2.5)(0,5){2}{\line(1,0){5}}
 \multiput(10,0)(5,0){2}{\line(0,1){10}}
  \multiput(10,0)(0,5){3}{\line(1,0){5}}
\multiput(10,2.5)(5,0){1}{\line(0,1){5}}
  \multiput(15,2.5)(0,5){2}{\line(1,0){5}}
\multiput(20,0)(5,0){2}{\line(0,1){10}}
  \multiput(20,0)(0,5){3}{\line(1,0){5}}
 \multiput(30,2.5)(5,0){1}{\line(0,1){5}}
  \multiput(25,2.5)(0,5){2}{\line(1,0){5}}
\put(32.3, 4){$\dots$}
\put(1.4,1.5){$e_2$}
 \put(1.4, 6.4){$e_1$}
 \put(6.4, 4){$e_3$}
 \put(16.4, 4){$e_6$}
\put(11.4,1.5){$e_5$}
  \put(11.4, 6.4){$e_4$}
\put(21.4,1.5){$e_8$}
  \put(21.4, 6.4){$e_7$}
\put(26.4, 4){$e_9$}
\end{picture}
\caption{The natural grading of ${\mathfrak n}_1$.}
\label{firstfig}
\end{figure}

\begin{proposition}
The Lie algebras ${\mathfrak n}_1$ and ${\mathfrak n}_2$ 
are naturally graded Lie algebras of width two.
\end{proposition}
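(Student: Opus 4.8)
The plan is to exhibit on each of ${\mathfrak n}_1$ and ${\mathfrak n}_2$ the grading obtained by grouping the consecutive loop-basis vectors into blocks, and then to verify the two conditions which make it a natural grading in the sense of Section~3: that the block decomposition is a Lie-algebra grading, and that $[{\mathfrak g}_1,{\mathfrak g}_i]={\mathfrak g}_{i+1}$ for every $i$. The widths are then read off the block sizes.

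For ${\mathfrak n}_1$ I set $\widetilde{\mathfrak g}_{2k-1}=\langle e_{3k-2},e_{3k-1}\rangle$ and $\widetilde{\mathfrak g}_{2k}=\langle e_{3k}\rangle$ for $k\ge1$ (this is the blocking already shown in Figure~\ref{firstfig}), so that the homogeneous dimensions are $2,1,2,1,\dots$. Since $[e_i,e_j]=c_{i,j}e_{i+j}$ by (\ref{n_1}) with $c_{i,j}$ depending only on $j-i$ modulo $3$, the inclusion $[\widetilde{\mathfrak g}_p,\widetilde{\mathfrak g}_q]\subseteq\widetilde{\mathfrak g}_{p+q}$ follows by inspecting the residues of $3k-2,3k-1,3k$ modulo $3$; the same inspection gives $[e_1,e_{3k-2}]=[e_2,e_{3k-1}]=0$, $[e_1,e_{3k-1}]=e_{3k}=-[e_2,e_{3k-2}]$, $[e_1,e_{3k}]=-e_{3k+1}$ and $[e_2,e_{3k}]=e_{3k+2}$, whence $[\widetilde{\mathfrak g}_1,\widetilde{\mathfrak g}_{2k-1}]=\widetilde{\mathfrak g}_{2k}$ and $[\widetilde{\mathfrak g}_1,\widetilde{\mathfrak g}_{2k}]=\widetilde{\mathfrak g}_{2k+1}$. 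Thus $\widetilde{\mathfrak g}_{\bullet}$ is a natural grading of ${\mathfrak n}_1$ of width $2$. (The same congruences give the descending central series ${\mathfrak n}_1^{2k-1}=\langle e_i:i\ge 3k-2\rangle$, ${\mathfrak n}_1^{2k}=\langle e_i:i\ge 3k\rangle$ and confirm ${\rm gr}_C{\mathfrak n}_1\cong{\mathfrak n}_1$.)

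For ${\mathfrak n}_2$ the scheme is the same, with period $8$ for the structure constants $d_{q,l}$ of Table~\ref{structure_const_n_2} in the basis index and period $6$ in the grading. Computing ${\mathfrak g}_1=\langle f_1,f_2\rangle$ and ${\mathfrak g}_{k+1}=[{\mathfrak g}_1,{\mathfrak g}_k]$ with the table yields the blocks
$$
{\mathfrak g}_1=\langle f_1,f_2\rangle,\ {\mathfrak g}_2=\langle f_3\rangle,\ {\mathfrak g}_3=\langle f_4\rangle,\ {\mathfrak g}_4=\langle f_5\rangle,\ {\mathfrak g}_5=\langle f_6,f_7\rangle,\ {\mathfrak g}_6=\langle f_8\rangle ,
$$
and then ${\mathfrak g}_7,\dots,{\mathfrak g}_{12}$ are obtained from ${\mathfrak g}_1,\dots,{\mathfrak g}_6$ by adding $8$ to every basis index, and so on; hence the homogeneous dimensions are periodic of period $6$ with pattern $(2,1,1,1,2,1)$, in particular always $1$ or $2$. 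It then remains only to check that this block decomposition respects the bracket, $[{\mathfrak g}_p,{\mathfrak g}_q]\subseteq{\mathfrak g}_{p+q}$; since $[f_q,f_l]=d_{q,l}f_{q+l}$ and $d_{q,l}$ depends only on $q,l$ modulo $8$, this is a finite verification. Granting it, $[{\mathfrak g}_1,{\mathfrak g}_i]={\mathfrak g}_{i+1}$ holds by construction, so $\oplus_k{\mathfrak g}_k$ is the natural grading of ${\mathfrak n}_2$, and its width is $2$.

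The one step that is not a pure formality is this last inclusion for ${\mathfrak n}_2$: the naive bound --- that $f_i\in{\mathfrak g}_p$ and $f_j\in{\mathfrak g}_q$ force the index $i+j$ into the window of ${\mathfrak g}_{p+q}$ --- is false, and one really needs the many vanishing entries of Table~\ref{structure_const_n_2} (for instance $d_{1,s}=0$ exactly for $s\equiv1,6\pmod{8}$ and $d_{2,s}=0$ exactly for $s\equiv2,3,4,7\pmod{8}$) to push the surviving brackets into the correct block; by the mod-$8$ periodicity this is only a bounded number of cases. A less computational route is to note that ${\mathfrak n}_1$ and ${\mathfrak n}_2$ are the maximal pro-nilpotent subalgebras $N(A_1^{(1)})$ and $N(A_2^{(2)})$ of the affine Kac--Moody algebras, that under this identification the natural grading is the principal (height) gradation, and that the $k$-th principal component has dimension equal to the number of positive roots of height $k$ counted with multiplicity --- which is $2$ for $k$ odd (resp. $k\equiv\pm1\pmod{6}$) and $1$ otherwise --- so that the width equals $2$ in both cases.
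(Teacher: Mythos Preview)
Your proof is correct and uses the same block decomposition as the paper (odd/even $2,1,2,1,\dots$ for ${\mathfrak n}_1$; the period-$6$ pattern $2,1,1,1,2,1$ for ${\mathfrak n}_2$). The only real difference is the framing: the paper computes the descending central series directly, showing
\[
C^{2m-1}{\mathfrak n}_1=\langle e_i:i\ge 3m-2\rangle,\qquad C^{2m}{\mathfrak n}_1=\langle e_i:i\ge 3m\rangle,
\]
and reads off the natural grading as the successive quotients (and declares the ${\mathfrak n}_2$ case ``completely analogous''), whereas you first posit the grading and then verify $[{\mathfrak g}_1,{\mathfrak g}_i]={\mathfrak g}_{i+1}$ together with $[{\mathfrak g}_p,{\mathfrak g}_q]\subset{\mathfrak g}_{p+q}$.

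The paper's route has one small advantage worth noting for your ${\mathfrak n}_2$ case: once the ideals $C^k$ are identified, ${\rm gr}_C{\mathfrak n}_2$ is \emph{automatically} a graded Lie algebra, so the ``non-formality'' you correctly flag---checking that the many zero entries of Table~\ref{structure_const_n_2} push every surviving $[f_i,f_j]$ into the right block---never arises as a separate step. Your Kac--Moody shortcut via the principal gradation of $N(A_1^{(1)})$ and $N(A_2^{(2)})$ is valid but forward-references Section~4, where the identifications ${\mathfrak n}_i\cong N(A_i^{(i)})$ and Proposition~\ref{gradings} are stated.
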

\begin{proof}
For the proof we will introduce new bases for both algebras. 

In the case ${\mathfrak n}_1$ we define new basic vectors 
$a_{2k{+}1}, b_{2k{+}1}, c_{2k}$ by the rule:
$$
a_{2k{+}1}{=}e_{3k{+}1}, \; b_{2k{+}1}{=}e_{3k{+}2}, \; c_{2k}{=}e_{3k}, \; {\rm for \; all}\; k \in {\mathbb Z}_+.
$$
The structure relations now look as follows
\begin{equation}
\label{n_1^-}
[a_{2k{+}1}, b_{2l{+}1}]{=}c_{2(k{+}l{+}1)},\; [c_{2k}, a_{2l{+}1}]{=}a_{2(k{+}l){+}1}, \;
[c_{2k}, b_{2l{+}1}]{=}{-}b_{2(k{+}l){+}1},
\end{equation}

One can easily verify by recursion that 
$$
C^{2m{+}1}{\mathfrak n}_1={\rm Span}(a_{2m{+}1}, b_{2m{+}1}, c_{2m{+}2},\dots,), \;\; C^{2m}{\mathfrak n}_1={\rm Span}(c_{2m}, a_{2m{+}1}, b_{2m{+}1},\dots,).
$$
Hence the natural grading is defined by 
$$
{\mathfrak n}_1=\oplus_{i{=}0}^{+\infty}{\tilde {\mathfrak n}}_{1,i}, \; {\rm where \;} {\tilde {\mathfrak n}}_{1,2m{+}1}=
 \langle a_{2m{+}1}, b_{2m{+}1} \rangle, {\tilde {\mathfrak n}}_{1,2m}=\langle c_{2m} \rangle
$$
i.e. with one-dimensional even and two-dimensional odd homogeneous components.

In the case ${\mathfrak n}_2$ we define new basic vectors $a_i, b_{6q{+}1}, b_{6q{+}5}$ by
$$
\begin{array}{c}
a_{6q{+}1}{=}e_{8k{+}1},\\
a_{6q{+}2}{=}e_{8k{+}3},\\
a_{6q{+}3}{=}e_{8k{+}4},\\ 
\end{array}
\begin{array}{c}
a_{6q{+}4}{=}e_{8k{+}5},\\
a_{6q{+}5}{=}e_{8k{+}6},\\ 
a_{6q{+}6}{=}e_{8k{+}8},\\
\end{array}
\begin{array}{c}
 b_{6q{+}1}{=}e_{8k{+}2}, \\
b_{6q{+}5}{=}e_{8k{+}7}, 
\end{array}
$$
Then 
$$
{\mathfrak n}_2=\oplus_{i{=}0}^{+\infty}{\tilde {\mathfrak n}}_{2,i}, \; {\tilde {\mathfrak n}}_{2,i}=
 \langle a_{i}, b_{i}\rangle, \;  {\rm if\;} i=6q{+}2\; {\rm or\;}i=6q{+}5, \;   {\tilde {\mathfrak n}}_{2,i}=\langle a_{i} \rangle
\; {\rm in \;other\; cases}. 
$$
The proof  is completely analogous to the previous case and is reduced to the direct calculation of ideals 
$C^k{\mathfrak n}_2$.
\end{proof}
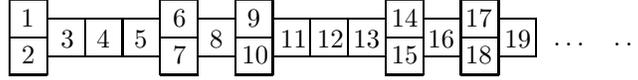
\begin{figure}[tb]
\begin{picture}(112,50)(-13,-5)
  \multiput(0,0)(5,0){2}{\line(0,1){10}}
  \multiput(0,0)(0,5){3}{\line(1,0){5}}
  \multiput(10,2.5)(5,0){2}{\line(0,1){5}}
  \multiput(5,2.5)(0,5){2}{\line(1,0){15}}
 \multiput(20,0)(5,0){2}{\line(0,1){10}}
  \multiput(20,0)(0,5){3}{\line(1,0){5}}
\multiput(30,2.5)(5,0){1}{\line(0,1){5}}
  \multiput(25,2.5)(0,5){2}{\line(1,0){5}}
\multiput(30,0)(5,0){2}{\line(0,1){10}}
  \multiput(30,0)(0,5){3}{\line(1,0){5}}
 \multiput(40,2.5)(5,0){2}{\line(0,1){5}}
  \multiput(35,2.5)(0,5){2}{\line(1,0){15}}
  \multiput(50,0)(5,0){2}{\line(0,1){10}}
  \multiput(50,0)(0,5){3}{\line(1,0){5}}
 \multiput(55,2.5)(5,0){2}{\line(0,1){5}}
  \multiput(55,2,5)(0,5){2}{\line(1,0){5}}
\multiput(60,0)(5,0){2}{\line(0,1){10}}
  \multiput(60,0)(0,5){3}{\line(1,0){5}}
 \multiput(70,2.5)(5,0){1}{\line(0,1){5}}
  \multiput(65,2.5)(0,5){2}{\line(1,0){5}}
\put(72.3, 4){$\dots$}
\put(80.3, 4){$\dots$}
\put(1.6,1.5){$2$}
  \put(1.5, 6.5){$1$}
 \put(6.8, 3.7){$3$}
 \put(11.6, 3.7){$4$}
 \put(16.6, 3.7){$5$}
\put(21.7,1.5){$7$}
  \put(21.7, 6.5){$6$}
\put(26.6, 3.7){$8$}
 \put(30.9,1.5){$10$}
  \put(31.6, 6.5){$9$}
 \put(36, 3.7){$11$}
 \put(40.9, 3.7){$12$}
 \put(45.7, 3.7){$13$}
 \put(50.8,1.5){$15$}
  \put(50.8, 6.5){$14$}
 \put(55.6, 3.7){$16$}
\put(60.6,1.5){$18$}
  \put(60.6, 6.5){$17$}
\put(65.8, 3.7){$19$}
\end{picture}
\caption{The natural grading of ${\mathfrak n}_2$.}
\label{firstfig}
\end{figure}

We define the set of polynomial matrices for positive integers $k=1,2,\dots$: 
$$
u_{2k{-}1}{=}\begin{pmatrix} 0 & t^{2k{-}1} &0\\
{-}t^{2k{-}1} & 0 &0\\
0&0&0
\end{pmatrix},
v_{2k{-}1}^{\pm}{=}\begin{pmatrix} 0 & 0&0\\
 0 & 0& t^{2k{-}1}\\
0& {\mp}t^{2k{-}1} & 0 \\
\end{pmatrix}, 
w_{2k}^{\pm}{=}\begin{pmatrix} 0 & 0& t^{2k}\\
 0 & 0&\\
 {\mp}t^{2k}& 0 & 0 \\
\end{pmatrix}.
$$ 
One can easily verify the commutation relations between them
$$
[u_{2k{-}1}, v_{2l{-}1}^{\pm}]= w_{2(k{+}l){-}2}^{\pm}, \;
[v_{2k{-}1}^{\pm}, w_{2l}^{\pm}]
=\pm u_{2(k{+}l){-}1}, \; 
[w_{2k}^{\pm},u_{2l{+}1}]
= v_{2(k{+}l){-}1}^{\pm}.
$$

The linear span ${\mathfrak n}_1^+{=}\langle u_1, v_1^+, w_2^+, \dots, u_{2k{+}1}, v_{2k{+}1}^+, w_{2k{+}2}^+, \dots \rangle$ is a naturally graded subalgebra in ${\mathfrak so}(3, {\mathbb K})\otimes {\mathbb K}[t]$ and 
${\mathfrak n}_1^-{=}\langle u_1, v_1^-, w_2^-, \dots, u_{2k{+}1}, v_{2k{+}1}^-, w_{2k{+}2}^-, \dots \rangle$ is a naturally graded subalgebra in ${\mathfrak so}(2,1)\otimes {\mathbb K}[t]$ respectively.             
\begin{proposition}[\cite{Mill}]
${\mathfrak n}_1^{\pm}$  are isomorphic
over ${\mathbb C}$ and non-isomorphic over ${\mathbb R}$.
\end{proposition}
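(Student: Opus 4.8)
The plan is to reduce the question to graded isomorphisms and then to extract a field-sensitive invariant of the natural grading. Both ${\mathfrak n}_1^{+}$ and ${\mathfrak n}_1^{-}$ are naturally graded with the subscript of each basis vector equal to its degree, so ${\mathfrak g}_1=\langle u_1,v_1^{\pm}\rangle$ is two-dimensional, $\dim{\mathfrak g}_2=\dim{\mathfrak g}_4=1$, $\dim{\mathfrak g}_3=2$, and the whole algebra is generated by ${\mathfrak g}_1$. Since any isomorphism of Lie algebras preserves the descending central sequence, and since $C^k{\mathfrak g}=\oplus_{i\ge k}{\mathfrak g}_i$ for a naturally graded ${\mathfrak g}$, an abstract isomorphism ${\mathfrak n}_1^{-}\to{\mathfrak n}_1^{+}$ induces a graded isomorphism of the associated graded algebras; as ${\rm gr}_C{\mathfrak n}_1^{\pm}\cong{\mathfrak n}_1^{\pm}$ canonically, ${\mathfrak n}_1^{-}$ and ${\mathfrak n}_1^{+}$ are isomorphic over a given field if and only if they are isomorphic as ${\mathbb N}$-graded Lie algebras, in which case the isomorphism is determined by the linear map $P$ it induces on ${\mathfrak g}_1$.

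The invariant I would attach to such a graded Lie algebra is a symmetric bilinear form on ${\mathfrak g}_1$. Because $\dim{\mathfrak g}_2=\dim{\mathfrak g}_4=1$, a choice of nonzero $w\in{\mathfrak g}_2$ and of an identification ${\mathfrak g}_4\cong{\mathbb K}$ yields a bilinear form $B(x,y)=[x,[w,y]]$ on ${\mathfrak g}_1$; the Jacobi identity and $[{\mathfrak g}_2,{\mathfrak g}_2]=0$ give $B(x,y)=[y,[w,x]]$, so $B$ is symmetric, and replacing $w$ or the identification of ${\mathfrak g}_4$ only rescales $B$ by a nonzero scalar. Hence the class of $B$ under $B\mapsto\lambda\,Q^{T}BQ$ ($\lambda\in{\mathbb K}^{*}$, $Q\in GL({\mathfrak g}_1)$) is an isomorphism invariant, and a graded isomorphism ${\mathfrak n}_1^{-}\to{\mathfrak n}_1^{+}$ would force $P^{T}B^{+}P=\lambda B^{-}$ for some $\lambda\ne0$. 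Computing $B^{\pm}$ in the basis $(u_1,v_1^{\pm})$ from the structure relations displayed above — using $[w_2^{\pm},u_1]=v_3^{\pm}$, $[v_1^{\pm},w_2^{\pm}]=\pm u_3$, $[u_1,v_3^{\pm}]=[u_3,v_1^{\pm}]=w_4^{\pm}$, together with $[u_i,u_j]=[v_i^{\pm},v_j^{\pm}]=0$ (the $u$'s, resp.\ the $v^{\pm}$'s, being scalar multiples of one matrix) — one obtains
\[
B^{+}=\begin{pmatrix}1&0\\0&1\end{pmatrix},\qquad B^{-}=\begin{pmatrix}1&0\\0&-1\end{pmatrix}.
\]

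Over ${\mathbb R}$ this finishes the argument: taking determinants in $P^{T}B^{+}P=\lambda B^{-}$ gives $(\det P)^{2}\det B^{+}=\lambda^{2}\det B^{-}$, which is impossible since $\det B^{+}=1>0$ and $\det B^{-}=-1<0$ (equivalently, $B^{+}$ is definite, $B^{-}$ indefinite, and no rescaling relates the two). Therefore ${\mathfrak n}_1^{+}\not\cong{\mathfrak n}_1^{-}$ over ${\mathbb R}$. Over ${\mathbb C}$ the obstruction disappears — any two nondegenerate symmetric forms on ${\mathbb C}^{2}$ are equivalent — and a concrete isomorphism is available: the bijective linear map $u_{2k-1}\mapsto u_{2k-1}$, $v_{2k-1}^{+}\mapsto -i\,v_{2k-1}^{-}$, $w_{2k}^{+}\mapsto -i\,w_{2k}^{-}$ respects all three families of structure relations, and invariantly it is the restriction of conjugation by $g={\rm diag}(1,1,i)\in GL(3,{\mathbb C})$, which intertwines the ambient loop algebras ${\mathfrak so}(3,{\mathbb C})\otimes{\mathbb C}[t]$ and ${\mathfrak so}(2,1)\otimes{\mathbb C}[t]$.

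I expect the two matrix computations to be routine; the points that need care are the reduction to graded isomorphisms and the verification that $B$ is a genuine, well-defined invariant of a naturally graded Lie algebra with $\dim{\mathfrak g}_2=\dim{\mathfrak g}_4=1$. The conceptual reason the answer depends on the ground field is that $B^{\pm}$ records the invariant quadratic form of ${\mathfrak so}(3)$ versus ${\mathfrak so}(2,1)$, and these become equivalent only after complexification.
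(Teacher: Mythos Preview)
Your argument is correct and self-contained. Note, however, that the paper does not actually supply a proof of this proposition: it is attributed to the external reference~[Mill], and the only in-text justification is the heuristic remark that ${\mathfrak so}(3,{\mathbb R})$ and ${\mathfrak sl}(2,{\mathbb R})$ are distinct real forms of ${\mathfrak sl}(2,{\mathbb C})$. So there is no proof in the paper to compare against; what you have written is a genuine argument rather than a paraphrase.

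Your reduction to graded isomorphisms via the lower central series is the correct first move, and the invariant you extract --- the conformal class of the symmetric bilinear form $B(x,y)=[x,[w,y]]$ on ${\mathfrak g}_1$, well-defined because $\dim{\mathfrak g}_2=\dim{\mathfrak g}_4=1$ --- is precisely what separates the two algebras over ${\mathbb R}$. The symmetry of $B$ follows from Jacobi and $\dim{\mathfrak g}_2=1$ as you say, the matrices $B^{\pm}$ are computed correctly from the displayed structure relations, and the determinant obstruction is decisive. The explicit complex isomorphism via conjugation by ${\rm diag}(1,1,i)$ checks against all three families of relations.

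It is worth observing that your invariant $B^{\pm}$ is, up to a scalar, the restriction to the degree-one piece of the trace form of the coefficient algebra ${\mathfrak so}(3)$ resp.\ ${\mathfrak so}(2,1)$; the definiteness of $B^{+}$ versus the indefiniteness of $B^{-}$ is exactly the signature obstruction distinguishing the compact and split real forms. In that sense your proof makes the paper's one-line heuristic precise.
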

 The latter fact is not surprising, given the fact that 
 ${\mathfrak so}(3, {\mathbb R})$ and ${\mathfrak sl}(2, {\mathbb R})$ are the different real forms of ${\mathfrak sl}(2, {\mathbb C})$. 

Let us introduce more examples of naturally graded Lie algebras.

1) Define a Lie algebra $\mathfrak{n}_2^3$ as an one-dimensional central extension of $\mathfrak{n}_2$:
$$\mathfrak{n}_2^3= \mathfrak{n}_2\oplus \langle c \rangle, \; [f_2,f_3]{=}c, \; [c, f_i]{=}0, i \in {\mathbb N}.
$$

2) Let  $S$ be a subset (finite or infinite) of the set of positive odd integers
$$
S=(3 \le 2s_1{+}1 \le 2s_2{+}1 \le  2s_3{+}1 \le \dots \le 2s_n{+}1 \le \dots,)
$$ 
Define a central extension  $\mathfrak{m}_{0}^{S}=\mathfrak{m}_{0}\oplus \langle c_{2s_1{+}1},c_{2s_2{+}1},\dots,c_{2s_n{+}1},\dots \rangle$ of $\mathfrak{m}_0$
$$
\begin{array}{c}
[e_1, e_l]=e_{l+1}, l \ge 2, \; [e_i, e_j]=0, i+j \ne 2s_j{+}1 \in S;\\

[e_k, e_{2s_j{+}1{-}k}]=({-}1)^kc_{2s_j{+}1},k=2, \dots, s_j, \; 2s_j{+}1 \in S, \\ 

[c_{2s_j{+}1}, e_l] = 0, \forall l \in {\mathbb N}, \forall 2s_j{+}1 \in S.
\end{array}
$$ 
\begin{theorem}[\cite{Mill}]
\label{second_osnovn}
Let $\mathfrak{g}=\bigoplus_{i=1}^{+\infty}\mathfrak{g}_i$ be
an infinite-dimensional naturally graded Lie algebra over ${\mathbb R}$ such that
\begin{equation}
\label{3/2}
\dim{\mathfrak{g}_i}+\dim{\mathfrak{g}_{i{+}1}}\le 3, \; i \in {\mathbb N}.
\end{equation}
Then $\mathfrak{g}$ is isomorphic to the one and only one Lie algebra from
the following list:
$$
\mathfrak{n}_1^{\pm}, \mathfrak{n}_2, \mathfrak{n}_2^3, \mathfrak{m}_{0}, \{ \mathfrak{m}_{0}^{S}, S \subset \{3,5,7,\dots,2m{+}1,\dots \}.
$$
\end{theorem}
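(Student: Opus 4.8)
\medskip
\noindent\textbf{Proof sketch} (the result is due to \cite{Mill}). The plan is first to cut down the possibilities for the dimension sequence, then to normalize the structure constants level by level, and finally to separate the resulting algebras by suitable invariants.

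\emph{Step 1 (dimension sequence).} Put $d_i=\dim\mathfrak{g}_i$. Since $\mathfrak{g}$ is naturally graded, $\mathfrak{g}_{i+1}=[\mathfrak{g}_1,\mathfrak{g}_i]$, so a single vanishing component would force $\mathfrak{g}$ to be finite-dimensional; hence $d_i\ge1$ for all $i$. If $d_1\ge3$ then (\ref{3/2}) gives $d_2\le0$, and if $d_1=1$ then $\mathfrak{g}_2=[\mathfrak{g}_1,\mathfrak{g}_1]=0$; both are impossible, so $d_1=2$ and $d_2=1$. Feeding $d_i\ge1$ back into (\ref{3/2}) shows that $d_i\in\{1,2\}$ for every $i$ and that $d_i=2$ forces $d_{i+1}=1$. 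Thus a dimension sequence is nothing but the set $T=\{i:d_i=2\}$, which contains $1$ and no two consecutive integers, and one verifies that $\mathfrak{m}_0$, $\mathfrak{n}_1^{\pm}$, $\mathfrak{n}_2$, $\mathfrak{n}_2^3$ and every $\mathfrak{m}_0^S$ has a sequence of this shape.

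\emph{Step 2 (normalization).} Fix a basis $e_1,e_2$ of $\mathfrak{g}_1$, set $e_3:=[e_1,e_2]$, and reconstruct the bracket one homogeneous component at a time, choosing adapted basis vectors in each $\mathfrak{g}_i$ and rescaling them. The freedom in the choice of the generators $e_1,e_2$ is used up at the first level where a genuine alternative arises, and this is where the list bifurcates. In one branch the second generator acts so that all brackets among $\mathfrak{g}_{\ge2}$ land in central directions; then $\mathfrak{g}$ is $\mathfrak{m}_0$ together with at most one extra central generator in each two-dimensional component, which yields exactly $\mathfrak{m}_0$ and the algebras $\mathfrak{m}_0^S$, the set $S$ recording where the central generators sit. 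In the other branch the action of $\mathfrak{g}_1$ on the bottom components is non-degenerate; the Jacobi identity applied to triples $(e_1,e_2,x)$ and $(e_1,e_j,e_k)$ then rigidly propagates the low-degree relations, one recognizes the (possibly order-two twisted) loop algebra of $\mathfrak{sl}(2,\mathbb{C})$, respectively $\mathfrak{sl}(3,\mathbb{C})$, and one gets $\mathfrak{n}_1^{\pm}$, $\mathfrak{n}_2$, $\mathfrak{n}_2^3$ with no continuous parameter left. Over $\mathbb{R}$ a single residual sign, coming from the two real forms $\mathfrak{so}(3)$ and $\mathfrak{sl}(2,\mathbb{R})$ of $\mathfrak{sl}(2,\mathbb{C})$, survives in the all-odd pattern and gives the two algebras $\mathfrak{n}_1^{\pm}$.

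\emph{Step 3 (non-isomorphism).} The dimension sequence separates $\mathfrak{m}_0$ (which has $T=\{1\}$), separates $\mathfrak{n}_2$ from $\mathfrak{n}_2^3$ (distinct eventually periodic sets $T$), and recovers $S$ among the $\mathfrak{m}_0^S$; it does not distinguish $\mathfrak{n}_1^{\pm}$ from the $\mathfrak{m}_0^S$ whose $T$ is all odd numbers, nor $\mathfrak{n}_1^+$ from $\mathfrak{n}_1^-$. For the first I would use the invariant ``$\mathfrak{g}$ admits a codimension-one ideal that is nilpotent of class $\le2$'': for every $\mathfrak{m}_0^S$ the subspace $[\mathfrak{g},\mathfrak{g}]\oplus\langle e_2\rangle$, with $e_2$ the second generator, is such an ideal, whereas for $\mathfrak{n}_1^{\pm}$, $\mathfrak{n}_2$, $\mathfrak{n}_2^3$ the derived algebra $[\mathfrak{g},\mathfrak{g}]$ already has an infinite lower central series and is contained in every codimension-one ideal, so no such ideal exists. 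For $\mathfrak{n}_1^+$ versus $\mathfrak{n}_1^-$ I would invoke the fact recorded above that these are the two distinct real forms and are non-isomorphic over $\mathbb{R}$.

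\emph{Main obstacle.} The hard step is Step 2: proving that the Jacobi identity leaves exactly the freedom exhibited by the list — that the two-dimensional ``defects'' can occur only in the periodic patterns realized by $\mathfrak{n}_1^{\pm}$, $\mathfrak{n}_2$, $\mathfrak{n}_2^3$, or else must all be central (giving the $\mathfrak{m}_0^S$), with no mixed, deformed or exotic possibility in between. This is a finite but intricate induction on the structure constants, and it is where essentially all the work lies.
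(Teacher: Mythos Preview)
The paper does not actually prove this theorem: it is quoted from \cite{Mill} (the author's preprint arXiv:1705.07494) and stated here without proof, so there is no ``paper's own proof'' to compare against. Your task was therefore ill-posed through no fault of your own.

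That said, your outline is a reasonable sketch of the kind of argument one expects in such a classification, and you are honest that Step~2 is where essentially all the content lies. A couple of small points you may want to tighten if you ever write this up in full. First, in Step~3 you assert that the dimension sequence ``recovers $S$ among the $\mathfrak{m}_0^S$''; this is true up to the bottom value, since the map $S\mapsto T=\{1\}\cup\{2s-1:2s+1\in S\}$ collapses the question of whether $3\in S$ (and indeed, with the definition as written in this paper, including $3$ in $S$ produces a central element with an empty defining range of brackets, so the natural grading does not see it). Second, you should also note that $\mathfrak{n}_2$ and $\mathfrak{n}_2^3$ share their set $T$ with suitable $\mathfrak{m}_0^S$ as well, not only $\mathfrak{n}_1^{\pm}$; your ``class $\le 2$ codimension-one ideal'' invariant handles those cases too, but you should say so explicitly. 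Beyond these, your Step~2 is really only a statement of intent; the genuine difficulty in \cite{Mill} is showing that once the pattern of two-dimensional components is fixed, the Jacobi identity forces either the loop-type rigidity or the $\mathfrak m_0$-type centrality, with no hybrids.
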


\section{Kac-Moody algebras $A_1^{(1)}$ and $A_2^{(2)}$}
Let $A$ be a generalized Cartan $(n\times n)$-matrix and ${\mathfrak g}(A)$ be the corresponding Kac-Moody affine algebra (see \cite{Kac} for necessary definitions and details).
By the definition ${\mathfrak g}(A)$ is generated by $3n$ elements $e_i, h_i, f_i, i=1,\dots,n$
satisfying the following relations
\begin{equation}
\label{Kac_Moody}
\begin{split}
[h_i,h_j]=0, [e_i,f_j]=\delta_{ij}h_i,\\ 
[h_i,e_j]=a_{ij}e_j, [h_i,f_j]={-}a_{ij}h_j,\\ 
ad^{1{-}a_{ij}} e_i (e_j)=0, ad^{1{-}a_{ij}} f_i (f_j)=0,\\
i,j=1,\dots,n, 
\end{split}
\end{equation}
where $a_{ij}$ are entries of our generalized Cartan matrix $A$.

 The Kac-Moody affine algebra ${\mathfrak g}(A)$ has the maximal nilpotent subalgebra $N(A) \subset {\mathfrak g}(A)$ and it can be defined by its generators $e_1,e_2,\dots, e_n$ and  the defining set of relations
$$
ad e_i^{{-}a_{ij}{+}1}(e_j)=0, 1 \le i \ne j \le n.
$$ 

The Lie algebra $N(A)$ is ${\mathbb N}\oplus \dots \oplus {\mathbb N}$-graded
\begin{equation}
\label{canon_grad}
N(A)=\oplus_{k_1{>}0,k_2{>}0,{\dots},k_n{>}0}^{{+}\infty} N(A)_{(k_1,k_2,\dots,k_n)},
\end{equation}
where a homogeneous subspace $N(A)_{(k_1,k_2,\dots,k_n)}$ is spanned by all commutator monomials involving precisely 
$k_i$ generators $e_i, i=1,\dots,n$.
\begin{proposition}
 \label{gradings}
$N(A)$ is naturally graded. Natural grading is just the sum  of the components of  canonical grading (\ref{canon_grad})
$$
N(A)=\bigoplus_{N{=}1}^{{+}\infty}N(A)_{(K)}, \; N(A)_{(K)}=\bigoplus_{k_1{+}{\dots}{+}k_n=K} N(A)_{(k_1,\dots,k_n)}
$$
\end{proposition}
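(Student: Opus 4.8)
The plan is to transfer gradings from the free Lie algebra to the quotient and then read off the lower central series. Recall that $N(A)$ is the quotient of the free Lie algebra $FL=FL(e_1,\dots,e_n)$ on $n$ generators by the ideal $J$ generated by the Serre elements $\mathrm{ad}\,e_i^{-a_{ij}+1}(e_j)$, $1\le i\ne j\le n$. The free Lie algebra is $\mathbb{N}^n$-graded by multidegree, and each Serre generator is $\mathbb{N}^n$-homogeneous, of multidegree $-a_{ij}+1$ in the $i$-th coordinate, $1$ in the $j$-th, and $0$ elsewhere; hence $J$ is a homogeneous ideal and $N(A)$ inherits the grading (\ref{canon_grad}), with $N(A)_{(k_1,\dots,k_n)}$ the span of the images of commutator monomials using exactly $k_i$ copies of $e_i$. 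Each such homogeneous component is finite-dimensional, and for a fixed $K$ there are only finitely many multidegrees with $k_1+\dots+k_n=K$, so $N(A)_{(K)}:=\bigoplus_{k_1+\dots+k_n=K}N(A)_{(k_1,\dots,k_n)}$ defines a finite-dimensional $\mathbb{N}$-grading of $N(A)$.

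The key step is the elementary structural fact that in a free Lie algebra every homogeneous component $FL_{(K)}$ of total degree $K\ge 2$ is spanned by left-normed brackets of generators, i.e.\ $FL_{(K)}=[FL_{(1)},FL_{(K-1)}]$. Passing to the quotient and writing $\mathfrak g_i=N(A)_{(i)}$, this gives $[\mathfrak g_1,\mathfrak g_i]=\mathfrak g_{i+1}$ for every $i\ge 1$: the inclusion $\subseteq$ is automatic from compatibility of the grading with the bracket, and $\supseteq$ is precisely what the free-Lie-algebra fact provides. Since every Serre relation has total degree $-a_{ij}+2\ge 2$, the first component $\mathfrak g_1$ is the $n$-dimensional span of the images of $e_1,\dots,e_n$; together with the relation $[\mathfrak g_1,\mathfrak g_i]=\mathfrak g_{i+1}$ this is exactly one of the equivalent characterizations of a naturally graded Lie algebra recalled above, so $N(A)$ is naturally graded.

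It remains to identify this $\mathbb{N}$-grading with the natural grading defined via the descending central series, which simultaneously yields the displayed formula. I would prove by induction that $N(A)^k=\bigoplus_{K\ge k}\mathfrak g_K$: it holds for $k=1$, and, assuming it for $k$, the ideal $N(A)^{k+1}=[N(A),N(A)^k]$ contains $[\mathfrak g_1,\mathfrak g_K]=\mathfrak g_{K+1}$ for each $K\ge k$ and is contained in $\bigoplus_{M\ge k+1}\mathfrak g_M$, hence equals $\bigoplus_{K\ge k+1}\mathfrak g_K$. Therefore ${\rm gr}_C N(A)=\bigoplus_k N(A)^k/N(A)^{k+1}\cong\bigoplus_k\mathfrak g_k=N(A)$ as graded Lie algebras, so $N(A)$ is naturally graduable and $\bigoplus_K N(A)_{(K)}$ is a natural grading; this is exactly the statement that the natural grading is obtained by summing the components of the canonical grading (\ref{canon_grad}) along the hyperplanes $k_1+\dots+k_n=K$.

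I do not expect a genuine obstacle. The only point that is not routine bookkeeping with the lower central series is the appeal to $FL_{(K)}=[FL_{(1)},FL_{(K-1)}]$ for the free Lie algebra, together with the verification that the Serre ideal is $\mathbb{N}^n$-homogeneous so that all the gradings in play descend to $N(A)$.
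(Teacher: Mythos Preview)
Your proof is correct and follows essentially the same approach as the paper's: the paper's one-line proof simply asserts that ``all structure relations of $N(A)$ are defined by homogeneous monomials,'' and you have carefully unpacked exactly what this entails---that the Serre ideal is $\mathbb{N}^n$-homogeneous so the grading descends, and that generation in total degree $1$ forces $[\mathfrak g_1,\mathfrak g_i]=\mathfrak g_{i+1}$, which is the natural-grading criterion. Your version is considerably more detailed than the paper's, but the underlying argument is the same.
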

\begin{proof}
It follows from the fact that all structure relations of $N(A)$ are defined by homogeneous monomials.
\end{proof}
\begin{definition}
We call the Lie algebra $N(A)$ the positive part of a Kac-Moody algebra ${\mathfrak g}(A)$, where $A$ is the corresponding generalized Cartan matrix.
\end{definition}

It is a classical fact that all Kac-Moody algebras can be realized as affine Lie algebras $\hat {\mathcal L}({\mathfrak g})={\mathcal L}({\mathfrak g})\oplus{\mathbb C}c\oplus{\mathbb C}d$ or twisted affine Lie algebras $\hat {\mathcal L}({\mathfrak g},\mu)={\mathcal L}({\mathfrak g})\oplus{\mathbb C}c\oplus{\mathbb C}d$, double extensions of loop ${\mathcal L}({\mathfrak g})$ and twisted loop algebras ${\mathcal L}({\mathfrak g},\mu)$ respectively (see \cite{Kac}) of complex simple Lie algebras.

We briefly recall the definitions of two affine algebras $A_1^{(1)}$ and $A_2^{(2)}$.

The affine algebra $A_1^{(1)}$ corresponds to $2$ by $2$ generalized Cartan matrix $\begin{pmatrix} 2 & {-}2\\ {-}2 & 2\end{pmatrix}$. It can be realized as a double extension of  of the loop algebra of ${\mathfrak sl}(2,{\mathbb C})$. 
$$
A_1^{(1)}=\hat {\mathcal L}({\mathfrak sl}(2,{\mathbb C}))={\mathcal L}({\mathfrak sl}(2,{\mathbb C}))\oplus {\mathbb C}c \oplus {\mathbb C}d.
$$
Its maximal nilpotent subalgebra $N(A_1^{(1)})$ is isomorphic to the positive part 
${\mathfrak n}_1$ of the loop algebra ${\mathcal L}({\mathfrak sl}(2,{\mathbb C}))$ and it
is generated by two elements $e_1, e_2$ related by
$$
ad^3 e_2 (e_1)=\left[e_2,[e_2, [e_2, e_1]] \right]{=}0, \; ad^{3} e_1(e_2){=}\left[e_1,[e_1,[e_1,e_2]] \right] =0. 
$$
The twisted affine algebra $A_2^{(2)}$ in its turn corresponds  to another generalized $2$ by $2$ Cartan matrix $\begin{pmatrix} 2 & {-}4\\ {-}1 & 2\end{pmatrix}$.
Its maximal nilpotent subalgebra $N(A_2^{(2)})$ is isomorphic to the positive part 
${\mathfrak n}_2$ of the twisted loop algebra ${\mathcal L}({\mathfrak sl}(3,{\mathbb C}),\mu)$ and it is  generated by two elements $e_1, e_2$ related by
$$
ad^2 e_2 (e_1)=\left[e_2, [e_2, e_1] \right]{=}0, \; ad^{5} e_1(e_2){=}\left[e_1,[e_1,[e_1,[e_1,[e_1,e_2]]]] \right] =0. 
$$
Both Lie algebras $A_1^{(1)}$ and $A_2^{(2)}$ are canonically ${\mathbb Z}\oplus {\mathbb Z}$-graded as Kac-Moody algebras, their maximal nilpotent subalgebras ${\mathfrak n}_1=N(A_1^{(1)})$ and ${\mathfrak n}_2=N(A_2^{(2)})$ are ${\mathbb N}\oplus {\mathbb N}$-graded.
$$
N(A_i^{(i)})={\mathfrak n}_i=\oplus_{p,q{=}1}^{{+}\infty} (N(A_i^{(i)}))_{(p,q)}, i=1,2,
$$
where $(N(A_i^{(i)}))_{p,q}, i{=}1,2,$ is the linear span of all commutator monomials involving precisely 
$p$ generators $e_1$ and $q$ generators $e_2$. Generators $e_1, e_2$ have gradings $(1,0)$ and $(0,1)$ respectively. 

How are the gradings of the Lie algebras ${\mathfrak n}_1$ and ${\mathfrak n}_2$, as defined in previous sections, related to the gradings of $N(A_1^{(1)})$ and $N(A_2^{(2)})$ just considered? The connection between the natural grading of ${\mathfrak n}_i$ and the canonical one of $N(A_i^{(i)})$ is already established in the Proposition \ref{gradings}. What about the narrow ${\mathbb N}$-gradings of ${\mathfrak n}_1$ and ${\mathfrak n}_2$?

One can verify that the canonical grading $deg(f_{8m{+}s})$ of a basic element $f_{8m{+}s}$ in ${\mathfrak n}_2$ is defined for ${-}1 \le s \le 6,$ by 
\begin{equation}
\label{A^2_2_grad}
deg(f_{8m{+}s})=\left\{ \begin{array}{l} (4m{+}s, 2m), \; {\rm if}\; s \le 1;\\(4m{+}s{-}2, 2m{+}1), \;{\rm if}\; s \ge 2. \end{array} \right.
\end{equation}
For instance $f_{8m{+}7}$ has the canonical bigrading equal to
$(4m{+}3,2m{+}2)$. In its turn the bigrading of $f_{8m{+}6}$ equals $(4m{+}4,2m{+}1)$. Hence both of them has the natural grading $6m{+}5$.

For canonical bigradings of basic elements $e_i$ of ${\mathfrak n}_1$ we have
$$
deg(e_{3k+1})=(k+1,k), \; deg(e_{3k+2})=(k,k+1), \; deg(e_{3k})=(k,k).
$$

\section{Two-dimensional  integrable hyperbolic systems}

Consider an exponential hyperbolic system
\begin{equation}
\label{exp_Cartan}
u_{x y}^j=e^{\rho_j}, \; \rho_j=a_{j1}u^1+\dots+a_{jn}u^n, \;j=1,\dots, n.
\end{equation}
where $u(x, y)^j, j=1,\dots,n$ are locally analytic functions on variables $x, y$.
For an arbitrary $n$ by $n$ matrix $A$ define vector fields
\begin{equation}
\label{basic_fields}
X_{\alpha}=e^{-\rho_{\alpha}}\sum_{k=1}^{+\infty}D^{k-1}(e^{\rho_{\alpha}})\frac{\partial}{\partial u_k^{\alpha}}=
\sum_{k=1}^{+\infty}B_{k-1}(\rho_{\alpha}^1,\dots,\rho_{\alpha}^{k-1})\frac{\partial}{\partial u_k^{\alpha}},\;  \alpha =1,\dots,n, 
\end{equation}
where $\rho_{\alpha}=a_{\alpha 1}u^1+\dots +a_{\alpha n}u^n$ and we introduced linear functions $\rho_{\alpha}^i, i \ge 1,$ defined by
$$
\rho_{\alpha}^i=a_{\alpha 1}u^1_i+\dots+a_{\alpha n}u^n_i, D(\rho_{\alpha}^i)=\rho_{\alpha}^{i+1}, \; i \ge 1. 
$$
It pas proved in \cite{Leznov} that if $A$ is the Cartan matrix of a semisimple Lie algebra ${\mathfrak g}$ of the rank $n$ then the exponential hyperbolic system (\ref{exp_Cartan}) is integrable. The proof consisted in the explicit construction of a complete solution of equation in
an explicit form which depends on $2n$ arbitrary functions, thus generalizing the one-dimensional case of the classical Liuoville equation $u_{xy}=e^u$. An essential condition in the proof was the nondegeneracy of the Cartan matrix $A$.

Later it was claimed in the preprint \cite{ShYa} that the  main  result in \cite{Leznov} can be generalzed for an arbitrary  generalized Cartan matrix $A$ (possibly degenerate) if we apply the inverse scattering problem method. In the proof \cite{ShYa}, however, there are unclear points.

We consider two-dimensional case $n=2$ that was studied explicitly in  \cite{ShYa, LSmSh}.
$$
\left \{ \begin{array}{c}
u^1_{x y}=e^{(a_{11}u^1+a_{12}u^2)},\\
u^2_{x y}=e^{(a_{21}u^1+a_{22}u^2)},\\
\end{array}\right.,
\; A=\begin{pmatrix}
a_{11} & a_{12}\\
a_{21} & a_{22}
\end{pmatrix}
$$
Characteristic equation $\frac{\partial}{\partial x}w(u_1,u_2,\dots,)=0$ is equivalent to the system 
$$
X_1w=X_2w=0.
$$
where for the  basic fields $X_{\alpha}, \alpha=1,2$, we have the following expansions
$$
X_{\alpha}=\frac{\partial}{\partial u^{\alpha}_1}+(a_{{\alpha}1}u^1_1{+}a_{{\alpha}2}u^2_1)\frac{\partial}{\partial u^{\alpha}_2}+\left((a_{{\alpha}1}u^1_1{+}a_{{\alpha}2}u^2_1)^2{+}(a_{{\alpha}1}u^1_2{+}a_{{\alpha}2}u^2_2)\right)\frac{\partial}{\partial u^{\alpha}_3}+\dots
$$
For instance this system is consistent and it is easy to verify that it has an integral of the second order for arbitrary matrix $A$
$$
w\equiv w^{(2)}(u_1,u_2)=2a_{21}u^1_2+2a_{12}u^2_2-a_{11}a_{21}(u^1_1)^2-2a_{12}a_{21}u_1^1u^2_1-a_{22}a_{12}(u_1^2)^2
$$
In the text of papers \cite{ShYa, LSmSh} we encounter another definition of the characteristic Lie algebra $\chi(A)$ of an exponential hyperbolic system.

\begin{definition}[\cite{LSmSh, ShYa}]
A Lie algebra $\chi(A)$ of vector fields generated by $n$ operators $X_{\alpha}, \alpha=1,\dots,n,$ that are defined by (\ref{basic_fields})
is called characteristic Lie algebra  of  the hyperbolic exponential system (\ref{exp_Cartan}) defined by a matrix $A$.
\end{definition}
\begin{remark}
We do not see operators $\frac{\partial}{\partial u^j}$ among the generators of our algebra. And hence $\chi(A)$ is pro-nilpotent.
\end{remark}

It was proved  in \cite {ShYa, LSmSh} that 

1) for the generalized degenerate Cartan matrix $A=\begin{pmatrix} 2 & {-}2 \\ {-}2 & 2\end{pmatrix}$
the corresponding  characteristic Lie algebra $\chi(A)={\mathcal Lie}_{\mathbb C}(X_1, X_2)$ is isomorphic to the positive part ${\mathfrak n}_1$ of the affine  Kac-Moody algebra $A^{(1)}_1$. The corresponding exponential system is integrable in the framework of the inverse scattering method;

2) for the generalized degenerate Cartan matrix $A=\begin{pmatrix} 2 & {-}4 \\ {-}1 & 2\end{pmatrix}$
the corresponding characteristic  Lie algebra $\chi(A)={\mathcal Lie}_{\mathbb C}(X_1, X_2)$ is isomorphic to the positive part ${\mathfrak n}_2$ of the affine  Kac-Moody algebra $A^{(2)}_2$. Like in the previous case the hyperbolic system is integrable if we apply the inverse scattering problem method.

\begin{remark}
Hyperbolic exponential systems corresponding to nondegenerate Cartan $2\times2$-matrices of semisimple Lie algebras ($A_1{\oplus}A_1, A_2, C_2, G_2$) are Darboux-integrable.
\end{remark}

\section{Growth of  Lie algebras}

In the late sixties Victor Kac  studied simple ${\mathbb Z}$-graded  Lie algebras
${\mathfrak g}=\oplus_{k \in{\mathbb Z}} {\mathfrak g}_k$
of finite growth in the following sense
$$\dim{{\mathfrak g}_k} \le P(|k|), \; k \in {\mathbb Z},$$ for some polynomial $P(t)$.  
We recall that a ${\mathbb Z}$-graded  Lie algebra ${\mathfrak g}=\oplus_{k \in{\mathbb Z}} {\mathfrak g}_k$ is called simple graded if 
it does not contain non-trivial homogeneous ideal $I=\oplus_{k \in {\mathbb Z}}I_k$ where
$I_k=I{\cap} {\mathfrak g}_k$.
Kac \cite{Kac1} proved  that an infinite-dimensional simple ${\mathbb Z}$-graded 
Lie algebra ${\mathfrak g}$ of finite growth that satisfies the following two technical conditions 
\begin{equation}
\label{Kac_condition}
\begin{array}{l}
1) \;\; {\rm {\mathfrak g} \; is \; generated \; by\;  its\; "local\; part"} \; {\mathfrak g}_{-1}{\oplus}{\mathfrak g}_0{\oplus}{\mathfrak g}_1;\\
2) \;\; {\rm  the}\; {\mathfrak g}_0{-}{\rm module} \; {\mathfrak g}_{-1} \; {\rm is \; irreducible}. 
\end{array}
\end{equation}

is isomorphic to one Lie algebra of the following types:

\begin{itemize}
\item
loop algebras  ${\mathcal L}({\mathfrak g})={\mathfrak g}\otimes {\mathbb C}[t,t^{{-}1}]$, where  ${\mathfrak g}$ is finite-dimensional simple Lie algebra and  ${\mathbb C}[t,t^{{-}1}]$ is the ring of Laurent polynomials over complex numbers. Namely there are four infinite series and five exceptional  so-called centerless affine Lie algebras
\cite{Kac}
$$
A_n^{(1)}, B_n^{(1)}, C_n^{(1)}, D_n^{(1)}, E_6^{(1)}, E_7^{(1)}, E_8^{(1)}, F_4^{(1)}, G_2^{(1)}.
$$

\item
twisted loop algebras 
$$
{\mathcal L}({\mathfrak g},\mu)=\bigoplus_{\begin{array}{c}i{\in}{\mathbb Z},i{\equiv}j{\;\rm mod \;n},\\j{=}0,1,{\dots,n{-}1}
\end{array}}{\mathfrak g}_j{\otimes}t^i \subset {\mathfrak g}\otimes {\mathbb C}[t,t^{{-}1}],
$$
where a simple finite-dimensional Lie algebra ${\mathfrak g}=\oplus_{i=0}^{n{-}1} {\mathfrak g}_i$ is graded by the cyclic group ${\mathbb Z}_n$ (eigensubspaces of an automorphism $\mu$ of ${\mathfrak g}$). Here we have two infinite series and two exceptional  centerless twisted affine Lie algebras \cite{Kac}
$$
A_n^{(2)}, D_n^{(2)}, E_6^{(2)}, D_4^{(3)}.
$$

\item
 the Lie algebras $W_n, S_n, K_n, H_n$ of Cartan type,
for instance $W_n$ is the Lie algebra of derivations of the ring of polynomials ${\mathbb C}[x_1,\dots,x_n]$;
\end{itemize}
 Moreover, Kac conjectured
that dropping the condition (\ref{Kac_condition}) would add only
 the Witt algebra $W$ to the classification list. 
\begin{remark}
The Witt algebra $W$ and $W_1$ (with no grading) do not satisfy the first condition from (\ref{Kac_condition}).
\end{remark}

Kac's conjecture was proved in 1990 by 
Mathieu \cite{Mat}.

Suppose that an infinite-dimensional Lie algebra ${\mathfrak g}$ is generated by  its
finite-dimensional subspace $V_1({\mathfrak g})$. For $n>1$ we denote by $V_n({\mathfrak g})$ the ${\mathbb K}$-linear span of all products in elements of
$V_1({\mathfrak g})$ of length at most $n$ with arbitrary arrangements of brackets. We have  an ascending
chain of finite-dimensional subspaces of ${\mathfrak g}$:
$$V_1({\mathfrak g}) \subset V_2({\mathfrak g}) \subset\dots \subset  V_n({\mathfrak g}) \subset \dots, \; \cup_{i=1}^{+\infty} V_i({\mathfrak g}) ={\mathfrak g}.$$ 

The Gelfand-Kirillov dimension of ${\mathfrak g}$ \cite{GeK} is
$$
GK dim {\mathfrak g}= \limsup_{n \to +\infty} \frac{\log{\dim{V_n({\mathfrak g})}}}{\log{n}}.
$$
A finite Gelfand-Kirillov dimension means that there exists a polynomial $P(x)$ such that 
$\dim{V_n({\mathfrak g})} < P(n)$ for all $n>1$. 
In particular if ${\mathfrak g}$ is finite-dimensional then $GK dim {\mathfrak g}=0$.

For a pro-nilpotent Lie algebra ${\mathfrak g}$ the growth function $F_{\mathfrak g}(n)$ can be calculated in terms  of codimensions of ideals of its descending central series 
$$
F(n)_{\mathfrak g}=\dim{V_n({\mathfrak g})}=\dim{({\mathfrak g}/C^{n{+}1}{\mathfrak g})}.
$$ 
Let
${\mathfrak g}=\oplus_{i=1}^{+\infty}{\mathfrak g}_i$ be a naturally graded Lie algebra then 
$$
F_{\mathfrak g}(n)=\dim{V_n({\mathfrak g})}=\sum_{i=1}^n\dim{{\mathfrak g}_i}.
$$

For the Lie algebras 
${\mathfrak m}_0, {\mathfrak m}_2$ and $W^+$  considered above we have 
$$F_{W^+}(n)=F_{{\mathfrak m}_0}(n)=F_{{\mathfrak m}_2}(n)=n{+}1$$ 
and it is the slowest possible growth. 

For an arbitrary  naturally graded Lie algebra ${\mathfrak g}=\oplus_{i=1}^{+\infty}{\mathfrak g}_i$ of width $d$ the function $F_{\mathfrak g}(n)$ grows not faster than $dn$:
$$
F_{\mathfrak g}(n) \le dn.
$$
Obviously all  Lie algebras of finite width have $GK dim {\mathfrak g}=1$.

Consider two growth functions of the Lie algebras ${\mathfrak n}_1$ and ${\mathfrak n}_2$.
$$
\frac{3n}{2} \le F_{{\mathfrak n}_1}(n) \le \frac{3n{+}1}{2},
\frac{4n}{3} \le F_{{\mathfrak n}_2}(n) \le \frac{4n{+}2}{3}, \; \forall n \in {\mathbb N}.
$$
Hence the piecewise linear functions $F_{{\mathfrak n}_1}(n)$ and $F_{{\mathfrak n}_1}(n)$ grow on average at rates of $\frac{3}{2}$ and $\frac{4}{3}$ respectively.
\begin{remark}
We have noticed that the growth function $F_{\mathfrak g}(n)$ of a ${\mathbb N}$-graded Lie algebra ${\mathfrak g}=\oplus_{i=1}^{+\infty}{\mathfrak g}_i$ generally speaking is not completely determined by dimensions $\dim {\mathfrak g}_i$ of its homogeneous components. This is true if and only if the ${\mathbb N}$-grading of is natural ${\mathfrak g}=\oplus_{i=1}^{+\infty}{\mathfrak g}_i$.

The next remark is that there is an  continuum family  of pairwise nonisomorphic linearly growing Lie algebras ${\mathfrak m}_0^S$ indexed by subsets $S \subset \{3,5,7,\dots\}$, while according to the Mathieu theorem \cite{Mat} there is only  a countable number of pairwise nonisomorphic simple ${\mathbb Z}$-graded Lie algebras of finite growth. 
\end{remark}
\begin{lemma}
\label{growth_functions}
Suppose $\tilde {\mathfrak g}$ is generated by its finite-dimensional subspace 
$$V_1(\tilde {\mathfrak g})={\mathfrak g}_0\oplus {\mathfrak g}_1,$$ 
where ${\mathfrak g}_0$ is an abelian subalgebra in $\tilde {\mathfrak g}$, the subspace ${\mathfrak g}_1$ is  invariant under ${\mathfrak g}_0$-action on $\tilde {\mathfrak g}$. Assume also that the ${\mathfrak g}_0$-module ${\mathfrak g}_1$ is diagonalizable and corresponding weights (roots) $\alpha_1, \dots, \alpha_q \in {\mathfrak g}_0^*$ are non-zero.
Define a subalgebra ${\mathfrak g}$ generated by the subspace $V_1({\mathfrak g})={\mathfrak g}_1$. 

Then the growth functions $F_{\mathfrak g}(n)$, $F_{\tilde {\mathfrak g}}(n)$ of the Lie algebras ${\mathfrak g}$ and $\tilde {\mathfrak g}$ are related
$$
F_{\tilde {\mathfrak g}}(n)=F_{\mathfrak g}(n)+\dim{{\mathfrak g}_0}.
$$
Hence  ${\mathfrak g}$ and $\tilde {\mathfrak g}$ have equal Gelfand-Kirillov dimensions
$$
GK dim \tilde {\mathfrak g}=GK dim \mathfrak g.
$$
\end{lemma}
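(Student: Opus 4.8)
The plan is to reduce everything to a single assertion about the filtration spaces, namely that $V_n(\tilde{\mathfrak g})=\mathfrak g_0\oplus V_n(\mathfrak g)$ for every $n\ge 1$. Once this is established, $F_{\tilde{\mathfrak g}}(n)=\dim V_n(\tilde{\mathfrak g})=\dim\mathfrak g_0+\dim V_n(\mathfrak g)=F_{\mathfrak g}(n)+\dim\mathfrak g_0$, and since $F_{\mathfrak g}(n)\to\infty$ (if $\mathfrak g$ is finite-dimensional both Gelfand--Kirillov dimensions are $0$) adding the constant $\dim\mathfrak g_0$ does not change $\limsup_{n}\frac{\log F(n)}{\log n}$, so $GK\dim\tilde{\mathfrak g}=GK\dim\mathfrak g$.

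First I would fix the ambient picture. Because $\mathfrak g_0$ is abelian and $[\mathfrak g_0,\mathfrak g_1]\subseteq\mathfrak g_1$, a Jacobi-identity induction on bracket length gives $[\mathfrak g_0,\mathfrak g]\subseteq\mathfrak g$; together with $[\mathfrak g,\mathfrak g]\subseteq\mathfrak g$ and $[\mathfrak g_0,\mathfrak g_0]=0$ this shows that $\mathfrak g_0+\mathfrak g$ is a subalgebra, and as it contains the generating subspace $\mathfrak g_0\oplus\mathfrak g_1$ it equals $\tilde{\mathfrak g}$, with $\mathfrak g$ an ideal. For the directness $\mathfrak g_0\cap\mathfrak g=0$ I would use weights: by the derivation property of $\mathrm{ad}$, an iterated bracket of $k\ge 1$ elements of $\mathfrak g_1$ is a $\mathrm{ad}_{\mathfrak g_0}$-eigenvector with weight $\alpha_{i_1}+\dots+\alpha_{i_k}$; since each $\alpha_i$ is non-zero and, using that the commutant is naturally graded, i.e. that $\tilde{\mathfrak g}$ carries a $\mathbb Z_{\ge 0}$-grading with $\mathfrak g_0$ in degree $0$ and $\mathfrak g_1$ in degree $1$, no such sum vanishes, $\mathfrak g$ is spanned by weight vectors of non-zero weight while $\mathfrak g_0$ is annihilated by $\mathrm{ad}_{\mathfrak g_0}$; hence the sum is direct.

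The core computation is the inclusion $V_n(\tilde{\mathfrak g})\subseteq\mathfrak g_0+V_n(\mathfrak g)$ (the reverse one is immediate, since $\mathfrak g_0\subseteq V_1(\tilde{\mathfrak g})$ and brackets of length $\le n$ of elements of $\mathfrak g_1\subseteq V_1(\tilde{\mathfrak g})$ lie in $V_n(\tilde{\mathfrak g})$). Take a bracket monomial $\omega$ of length $\le n$ in elements of $\mathfrak g_0\cup\mathfrak g_1$, viewed as a binary tree with labelled leaves. If no leaf is labelled from $\mathfrak g_1$, then $\omega$ is a single element of $\mathfrak g_0$, longer brackets inside the abelian $\mathfrak g_0$ being zero. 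Otherwise I sink every $\mathfrak g_0$-leaf toward the leaves using $[g_0,[a,b]]=[[g_0,a],b]+[a,[g_0,b]]$ (and $[a,g_0]=-[g_0,a]$ when orientation requires it): a branch ending in two $\mathfrak g_0$-leaves dies, and a $\mathfrak g_0$-leaf meeting a $\mathfrak g_1$-leaf is absorbed since $[\mathfrak g_0,\mathfrak g_1]\subseteq\mathfrak g_1$. This rewriting terminates (induct on bracket length, and for fixed length on the total depth of the $\mathfrak g_0$-leaves) and expresses $\omega$ as a combination of brackets all of whose leaves lie in $\mathfrak g_1$ and which have the same number $k\le n$ of leaves, so $\omega\in V_k(\mathfrak g)\subseteq V_n(\mathfrak g)$. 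Combining the two inclusions with $\mathfrak g_0\cap\mathfrak g=0$ yields the desired identity.

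I expect the main obstacle to be precisely the directness $\mathfrak g_0\cap\mathfrak g=0$: the weight bookkeeping only places brackets of $\mathfrak g_1$-elements into the weight spaces $\alpha_{i_1}+\dots+\alpha_{i_k}$, and to conclude these never meet the zero weight occupied by $\mathfrak g_0$ one must genuinely use the non-vanishing of such sums, which is where the grading of the commutant (equivalently, that the $\alpha_i$ lie in an open half-space of $\mathfrak g_0^*$) enters; everything else — the ideal property, the two inclusions, and the Jacobi sink-the-$\mathfrak g_0$-leaves reduction — is formal and only requires care with the termination of the rewriting.
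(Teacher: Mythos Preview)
Your approach matches the paper's: both reduce a length-$n$ bracket word containing $s$ letters from $\mathfrak g_0$ to a scalar multiple of a length-$(n{-}s)$ word in $\mathfrak g_1$ --- you via Jacobi ``sinking'', the paper via the eigenvalue identity $\mathrm{ad}\,X_0(X_{i_1,\dots,i_m})=(\lambda_{i_1}{+}\cdots{+}\lambda_{i_m})X_{i_1,\dots,i_m}$, which is the same mechanism written two ways --- to obtain $V_n(\tilde{\mathfrak g})=\mathfrak g_0+V_n(\mathfrak g)$.

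You are in fact more careful than the paper about the directness $\mathfrak g_0\cap\mathfrak g=0$: the paper simply asserts it, while you correctly flag that it requires extra structure. The stated hypotheses alone do not suffice (take $\tilde{\mathfrak g}=\mathfrak{sl}_2$ with $\mathfrak g_0=\langle h\rangle$, $\mathfrak g_1=\langle e,f\rangle$; the weights $\pm 2$ are nonzero yet $[e,f]=h\in\mathfrak g_0\cap\mathfrak g$, and $F_{\tilde{\mathfrak g}}(2)=3\ne F_{\mathfrak g}(2)+1=4$), and your weight-sum remedy on its own also fails since in the sinh-Gordon application the weights are $\pm 1$ with $\alpha_1+\alpha_2=0$. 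It is the ambient $\mathbb Z_{\ge 0}$-grading present in all of the paper's applications that secures directness, exactly as you suspected.
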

\begin{proof}
For simplicity we consider the case $\dim {\mathfrak g}_0=1$. In addition to everything else, this is the case we will need for applications. However the general case is proved in a completely analogous way.  First of all we fix a non-trivial $X_0$ in one-dimensional ${\mathfrak g}_0$. 
Choose a basis $X_{1},\dots, X_{q}$ of ${\mathfrak g}_1$ consisting of eigen-vectors of  $ad X_0$ corresponding to eigenvalues $\lambda_1= \alpha_1(X_0),\dots,\lambda_q=\alpha_q(X_0)$ respectively
$$
adX_0(X_{j})=[X_0,X_{j}]=\lambda_jX_{j}, \; j=1,\dots, q.
$$
Let $X_{i_1,\dots,i_m}=X_{i_1} \dots X_{i_m}$ be an element of ${\mathfrak g}$ represented by a $m$-word, where  $X_{i_s} \in \{X_1,\dots, X_q\}, s=1,\dots,m,$ with arbitrary (but fixed) arrangement of brackets. Then
\begin{equation}
\label{GKreduction}
ad X_0(X_{i_1,\dots,i_m})=(\lambda_{i_1}+\dots+\lambda_{i_m})X_{i_1,\dots,i_m}, \;  q=1,\dots,m,
\end{equation}

We will prove (\ref{GKreduction})  by recursion. We start by $m=2$:
$$
ad X_0([X_{i_1},X_{i_2}])=[\lambda_{i_1}X_{i_1},X_{i_2}]+[X_{i_1},\lambda_{i_2}X_{i_2}] =(\lambda_{i_1}+\lambda_{i_2})[X_{i_1},X_{i_2}].
$$

Assume that (\ref{GKreduction}) is valid for commutators of orders less than $m$. We take a $m$-word $X_{i_1,\dots,i_q,i_{q+1},\dots,i_m}$ that can be written as a bracket $[X_{i_1,\dots,i_q},X_{i_{q+1},\dots,i_m}]$ of its subwords  $X_{i_1,\dots,i_q}$ and $X_{i_{q+1},\dots,i_m}$.
$$
\begin{array}{c}
ad X_0(X_{i_1,\dots,i_q,i_{q+1},\dots,i_m})=ad X_0([X_{i_1,\dots,i_q},X_{i_{q+1},\dots,i_m}])=\\=[(\lambda_{i_1}{+}{\dots}{+}\lambda_{i_q})X_{i_1,\dots,i_q},X_{i_{q+1},\dots,i_m}]+[X_{i_1,{\dots},i_q},(\lambda_{i_{q+1}}{+}{\dots}{+}\lambda_{i_m})X_{i_{q+1},{\dots},i_m}]=\\
=(\lambda_{i_{1}}{+}{\dots}{+}\lambda_{i_m})X_{i_{1},{\dots},i_q,i_{q+1},{\dots},i_m}.
\end{array}
$$
Now we consider an arbitrary element of $\tilde {\mathfrak g}$ represented by a $n$-th order commutator
$X_{i_1,\dots,i_n}$ where some subscripts can equal zero, in other words, $X_{i_1,\dots,i_n}$ may contain $X_0$ in a certain number among its own letters. Let $s$ be a total number of occurences of the letter $X_0$ in the word $X_{i_1,\dots,i_n}$, then
it follows from (\ref{GKreduction}) that
$$
X_{i_1,\dots,i_n} \in V_{n-s}({\mathfrak g}).
$$
It means that ${\mathfrak g}$ is of codimension one in $\tilde {\mathfrak g}$.
Hence the growth functions $F_{\tilde {\mathfrak g}}(n)$ and $F_{\mathfrak g}(n)$ are simply related and we have  
$$
V_n(\tilde {\mathfrak g})=\langle X_0 \rangle \oplus V_n({\mathfrak g}), \; F_{\tilde {\mathfrak g}}(n)=F_{\mathfrak g}(n) +1. 
$$
\end{proof}


\section{A bigraded Lie subalgebra ${\rm Diff}({\mathcal F})$  of differential operators}

We introduce a non-negative grading in the ring ${\mathbb K}[u_1,\dots,u_n,\dots]$ of polynomials over infinite number of variables $u_1,\dots,u_n,\dots$. We define it by recursion with respect to the power of polynomials.

1) We define the gradings (weights) ${\rm wt}(u_n)$ of generators $u_n, n \ge 1,$ and unit $1$
by the rule 
$$
{\rm wt}(1)=0,{\rm wt}(u_n)=n, n \in {\mathbb N}.
$$

2) Let $P_1$ and $P_2$ be two homogeneous polynomials of gradings ${\rm wt}(P_1){=}p_1$ and ${\rm wt}(P_2){=}p_2$ respectively.
Then their product $P_1P_2$ is a homogeneous polynomial of grading $p_1p_2$.

3) Let $P_1$ and $P_2$  be two homogeneous polynomials of weight ${\rm wt}(P_1){=}{\rm wt}(P_2){=}p$. Then
their sum $P_1+P_2$ is a homogeneous polynomial of grading $p$.

For instance $wt(u_1^3u_3)=6$ and a Bell polynomial $B_n(u_1,\dots, u_n)$ is a homogeneous polynomial of grading $n$: 
$$
{\rm wt}(B_2(u_1,u_2))={\rm wt}(u_1^2+u_2)=2, 
{\rm wt}(B_3(u_1,u_2,u_3))={\rm wt}(u_1^3+3u_1u_2+u_3)=3.
$$

Now we consider a subalgebra ${\mathcal F} \subset C^{\omega}(\Omega)[u_1,u_2,\dots]$ of quasipolynomials 
$$
Q(u,u_1,\dots,u_n,\dots)=\sum_{i={-}m}^Me^{\alpha_i u}P_i(u_1,\dots, u_{n_i}),
$$
where
$\alpha_i \in {\mathbb Z}$ and $P_i(u_1,\dots, u_{n_i})$ stands for a polynomial of variables $u_1,\dots, u_{n_i}$ taken from the ring ${\mathbb K}[u_1,\dots,u_n,\dots]$. 

The ${\mathbb K}$-algebra ${\mathcal F}$ admits a ${\mathbb Z}_{\ge 0}{\times}{\mathbb Z}$-grading 
$$
{\mathcal F}=\bigoplus_{k \in{\mathbb Z}_{\ge 0}, q \in{\mathbb Z}}{\mathcal F}_{k,q}, \; {\mathcal F}_{k,q}=\{ e^{qu}P(u_1,\dots,u_n), {\rm wt}(P)=k \}.
$$
This bigrading is compatible with the product structure in the ring  ${\mathcal F}$
$$
{\mathcal F}_{k,q} \cdot {\mathcal F}_{l,r} \subset {\mathcal F}_{k+l,q+r}.
$$
We consider the Lie algebra ${\rm Diff}(C^{\omega}(\Omega)[u_1,u_2,\dots])$ of all derivations of the algebra $C^{\omega}(\Omega)[u_1,u_2,\dots]$ and a Lie subalgebra  ${\rm Diff}({\mathcal F}) \subset {\rm Diff}(C^{\omega}(\Omega)[u_1,u_2,\dots])$ of first order differential operators 
$$
X=\sum\limits_{j=1}^{+\infty} Q_j(u,u_1,\dots,u_n,\dots)\frac{\partial}{\partial u_j},
$$
where $Q_j(u,u_1,\dots,u_n,\dots) \in {\mathcal F}$ are quasipolynomials.

The Lie subalgebra  ${\rm Diff}({\mathcal F})$ is  ${\mathbb Z}{\times}{\mathbb Z}$-graded
$$
{\rm Diff}({\mathcal F})=\bigoplus_{m \in{\mathbb Z}, r \in{\mathbb Z}} {\rm Diff}_{m,r}({\mathcal F}),\; [{\rm Diff}_{m,r}({\mathcal F}), {\rm Diff}_{n,q}({\mathcal F})] \subset  {\rm Diff}_{m{+}n,r{+}q}({\mathcal F}),
$$
where a homogeneous subspace 
 ${\rm Diff}_{m,r}({\mathcal F})$  is a linear subspace of first order differential operators 
\begin{equation}
\label{operator_grading}
{\rm Diff}_{m,r}({\mathcal F})=\left\{e^{ru}\sum\limits_{j=1}^{+\infty} P_j(u_1,{\dots},u_{s_j})\frac{\partial}{\partial u_j}, {\rm wt}(P_j)=j+m\right\}, (m,r) \in {\mathbb Z}{\times}{\mathbb Z}.
\end{equation}
\begin{definition}
The grading of  ${\rm Diff}_{m,r}({\mathcal F})$ defined by (\ref{operator_grading}) we will call
the operator bigrading of  ${\rm Diff}({\mathcal F})$.
\end{definition}
\begin{example}
$$
X_1=X(e^{p u})=e^{p u}\sum_{n=1}^{+\infty}B_{n-1}(u_1,\dots,u_{n-1})\frac{\partial}{\partial u_n} \in {\rm Diff}_{p,1}({\mathcal F}). 
$$ 
\end{example}

\begin{remark}
Although $X_0=\frac{\partial}{\partial u} \notin {\rm Diff}{\mathcal F}$ its adjoint 
${\rm ad} X_0$ defines a derivation of ${\rm Diff}{\mathcal F}$
\begin{equation}
\label{adX_0}
{\rm ad} X_0(X)=[X_0,X]=\sum\limits_{j=1}^{+\infty} \frac{\partial Q_j}{\partial u} \frac{\partial}{\partial u_j}, \; X=\sum\limits_{j=1}^{+\infty} Q_j \frac{\partial}{\partial u_j}.
\end{equation}
One can see that a subspace 
$V_p=\oplus_{n\in {\mathbb Z}} {\rm Diff}_{p,n}({\mathcal F})$ is an eigensubspace of ${\rm ad} X_0$  which corresponds to the eigenvalue $\lambda=p$. We have the decomposition of the Lie algebra  ${\rm Diff}{\mathcal F}$ into a direct sum of eigensubspaces of the operator ${\rm ad} X_0$.
$$
 {\rm Diff}{\mathcal F}=\bigoplus_{p \in {\mathbb Z}}V_p=\bigoplus_{p \in {\mathbb Z}}\left( \oplus_{n \in {\mathbb Z}}{\rm Diff}_{p,n}({\mathcal F})\right).
$$
\end{remark}
\begin{definition}
Define a new Lie algebra 
$$\hat {\rm Diff}{\mathcal F}=C^{\omega}(\Omega)X_0 \oplus_{\ltimes}  {\rm Diff}{\mathcal F}$$ as a semidirect sum of the Lie algebra $C^{\omega}(\Omega)X_0=\{g(u)X_0, g(u) \in  C^{\omega}(\Omega)\}$ acting on ${\rm Diff}{\mathcal F}$ by the formula (\ref{adX_0}). It is possible to extend the operator bigrading to the whole algebra $\hat {\rm Diff}{\mathcal F}$ by setting its value on the element $X_0$ equal to $(0,0)$.
\end{definition}

\section{Sinh-Gordon equation.}
\begin{theorem}
\label{sinh-Gordon_theorem}
The characteristic Lie algebra $\chi(\sinh{u})$ of the sinh-Gordon equation 
$$u_{xy}=\sinh{u}$$
is  isomorphic to the non-negative part 
$$
{\mathcal L}({\mathfrak sl}(2,{\mathbb K}))^{\ge0}={\mathfrak sl}(2,{\mathbb K}) {\otimes} {\mathbb K}[t],
$$ 
of the loop algebra  ${\mathcal L}({\mathfrak sl}(2,{\mathbb K}))={\mathfrak sl}(2,{\mathbb K}) {\otimes} {\mathbb K}[t,t^{-1}]$. 

 It is generated by three elements $X_0', X_1', X_2'$ that satisfy the following  relations
\begin{eqnarray}
[X_0', X_1']=X_1', \; [X_0, X_2']={-}X_2',\\
\label{n_1_relations}
\left[X_1',[X_1',[X_1',X_2']]\right]=0, \; \left[X_2',[X_2', [X_2', X_1']] \right]=0.
\end{eqnarray}
 It particular it means that the subalgebra $\chi(\sinh{u})^+$ generated by $X_1'$ and $X_2'$ is a codimension one ideal in $\chi(\sinh{u})$ and it is isomorphic to the (nilpotent) positive part $N(A_1^{(1)})$
of the Kac-Moody algebra $A_1^{(1)}=\hat {\mathcal L}({\mathfrak sl}(2,{\mathbb K}))={\mathcal L}({\mathfrak sl}(2,{\mathbb K}))\oplus {\mathbb K}c \oplus {\mathbb K}d$. 
\end{theorem}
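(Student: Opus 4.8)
The plan is to replace the two defining generators $X_0=\partial/\partial u$, $X_1=X(\sinh u)$ by a more convenient generating set adapted to the loop algebra. Writing $\sinh u=\tfrac12(e^u-e^{-u})$ gives $X_1=\tfrac12\bigl(X(e^u)-X(e^{-u})\bigr)$, and since $[X_0,X(g)]=X(g')$ we also get $[X_0,X_1]=\tfrac12\bigl(X(e^u)+X(e^{-u})\bigr)$. Hence $\chi(\sinh u)$ is generated by $X_0':=X_0$, $X_1':=X_1+[X_0,X_1]=X(e^u)$ and $X_2':=[X_0,X_1]-X_1=X(e^{-u})$, and the first two relations $[X_0',X_1']=X_1'$, $[X_0',X_2']=-X_2'$ are immediate from $[X_0,X(e^{\pm u})]=\pm X(e^{\pm u})$. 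Both $X_1'$ and $X_2'$ live in $\mathrm{Diff}({\mathcal F})$, with $X_1'\in\mathrm{Diff}_{-1,1}({\mathcal F})$, $X_2'\in\mathrm{Diff}_{-1,-1}({\mathcal F})$, and $X_0'$ acts on $\mathrm{Diff}({\mathcal F})$ as the derivation reading off the exponent of $e^{u}$.

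The core of the first half is the pair of Serre-type relations $(\mathrm{ad}\,X_1')^3X_2'=0$ and $(\mathrm{ad}\,X_2')^3X_1'=0$. I would prove the first one inside $\widehat{\mathrm{Diff}}({\mathcal F})=C^\omega(\Omega)X_0\oplus_{\ltimes}\mathrm{Diff}({\mathcal F})$ by iterating commutation with $D$, using the Corollary $[D,X(g)]=-gX_0$. Put $P=[X_1',X_2']$, $Q=[X_1',P]$, $R=[X_1',Q]$. Since $P$ has $e^{u}$-exponent $0$ one has $[X_0,P]=0$; short Jacobi computations then give $[D,P]=e^uX_2'+e^{-u}X_1'$, $[D,Q]=e^uP$, and finally $[D,R]=-e^uQ+e^uQ=0$. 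As $R$ contains no $\partial/\partial u$ term, Lemma \ref{lemma1} forces $R=0$, i.e. $(\mathrm{ad}\,X_1')^3X_2'=0$; the second relation follows from the involution $u\mapsto -u$, which preserves $u_{xy}=\sinh u$ and sends $X_1'\mapsto -X_2'$, $X_2'\mapsto -X_1'$. Together with the two easy relations this yields a surjection from the abstractly presented Lie algebra $L=\langle X_0',X_1',X_2'\mid [X_0',X_1']=X_1',\,[X_0',X_2']=-X_2',\,(\mathrm{ad}\,X_i')^3X_j'=0\rangle$ onto $\chi(\sinh u)$; and by the standard presentation theory of affine Kac--Moody algebras \cite{Kac} (the Serre relations present $N(A_1^{(1)})=\mathfrak n_1$, and adjoining $X_0'$ acting diagonally gives the semidirect sum with a one--dimensional torus), $L\cong{\mathcal L}(\mathfrak{sl}(2,{\mathbb K}))^{\ge0}$.

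The remaining, and main, difficulty is injectivity of this surjection ${\mathcal L}(\mathfrak{sl}(2,{\mathbb K}))^{\ge0}\twoheadrightarrow\chi(\sinh u)$. Here I would use that every proper quotient of ${\mathcal L}(\mathfrak{sl}(2,{\mathbb K}))^{\ge0}$ is finite-dimensional: a nonzero ideal necessarily meets the commutant $\mathfrak n_1=N(A_1^{(1)})$, and every nonzero ideal of $\mathfrak n_1$ has finite codimension (it must contain all graded components of sufficiently high degree, by the transitivity of the $\mathfrak n_1$-action in the natural grading). So it suffices to show $\chi(\sinh u)$ is infinite-dimensional, equivalently that no image of a basis vector $e_i$ of ${\mathcal L}(\mathfrak{sl}(2,{\mathbb K}))^{\ge0}$ collapses. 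I would verify this either by writing those images explicitly in terms of Bell polynomials — brackets of operators $e^{pu}\sum_n B_{n-1}(pu_1,\dots,pu_{n-1})\partial/\partial u_n$ — and reading off a nonvanishing leading coefficient, or, avoiding explicit formulas, by an induction of exactly the same $[D,\,\cdot\,]$-plus-Lemma \ref{lemma1} type as above: if the image of $e_{i+1}$ vanished, the commutation-with-$D$ identities would force the image of $e_i$ to vanish, descending to the contradiction $[X(e^u),X(e^{-u})]\ne 0$, which one checks directly to have a nonzero constant coefficient at $\partial/\partial u_2$. This non-collapsing verification is the step I expect to require real work; the rest is either routine bookkeeping or standard structure theory.

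Finally, once $\chi(\sinh u)\cong{\mathcal L}(\mathfrak{sl}(2,{\mathbb K}))^{\ge0}$ is established with $X_0'\leftrightarrow e_0$, $X_1'\leftrightarrow e_1$, $X_2'\leftrightarrow e_2$, the statement on the commutant is automatic: $\chi(\sinh u)/\langle X_0',X_1',X_2'\text{-commutators}\rangle$ is one-dimensional, so $[\chi(\sinh u),\chi(\sinh u)]$ is the codimension-one ideal corresponding to $[{\mathcal L}(\mathfrak{sl}(2,{\mathbb K}))^{\ge0},{\mathcal L}(\mathfrak{sl}(2,{\mathbb K}))^{\ge0}]=\bigoplus_{i\ge1}\langle e_i\rangle=\mathfrak n_1=N(A_1^{(1)})$, and this ideal is exactly the subalgebra generated by $X_1'$ and $X_2'$ since $e_1,e_2$ generate $\mathfrak n_1$.
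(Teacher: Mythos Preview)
Your argument is correct and shares its computational core with the paper: the same change of generators to $X_0',X_1'=X(e^u),X_2'=\pm X(e^{-u})$, and the same mechanism—iterated commutation with $D$ together with Lemma~\ref{lemma1}—to establish the Serre relations $(\mathrm{ad}\,X_1')^3X_2'=0$, $(\mathrm{ad}\,X_2')^3X_1'=0$. Your use of the involution $u\mapsto -u$ to deduce the second Serre relation from the first is a pleasant shortcut the paper does not take.

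Where you and the paper diverge is in how the isomorphism is completed. The paper does not invoke the Serre presentation of $N(A_1^{(1)})$ at all: it defines recursively an explicit sequence $X_3',X_4',\dots$ inside $\chi(\sinh u)$, proves each $X_i'$ is nonzero via the $[D,\,\cdot\,]$–recursion and Lemma~\ref{lemma1}, and then verifies \emph{all} the commutation relations $[X_i',X_j']=c_{i,j}X_{i+j}'$ of $\mathfrak n_1$ directly (its Lemma with relations~(\ref{A_1_relations})). That is a hands-on construction of the isomorphism. You instead argue abstractly: the Serre relations give a surjection ${\mathcal L}(\mathfrak{sl}(2,\mathbb K))^{\ge0}\twoheadrightarrow\chi(\sinh u)$ (using Gabber--Kac for $N(A_1^{(1)})$), every nonzero ideal of $\mathfrak{sl}(2,\mathbb K)\otimes\mathbb K[t]$ is of the form $\mathfrak{sl}(2,\mathbb K)\otimes I$ with $I\subset\mathbb K[t]$ of finite codimension, and hence injectivity reduces to $\chi(\sinh u)$ being infinite-dimensional, which you get from the same $[D,\,\cdot\,]$ descent. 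Your route is shorter and more conceptual, and it makes the link to Kac--Moody presentation theory explicit; the paper's route is entirely self-contained, avoids citing the Serre presentation theorem and the classification of ideals of current algebras, and produces an explicit basis useful for the grading tables. Either approach suffices; note only that your sign convention for $X_2'$ differs from the paper's by $-1$ (harmless), and your claimed bigrading $X_1'\in\mathrm{Diff}_{-1,1}$ is what actually matches the paper's definition—the paper's proof uses a shifted labeling.
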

\begin{proof}
We denote 
$$
X_0=\frac{\partial}{\partial u}, \; X_1=\sum_{n=1}^{+\infty}D^{n-1}(\sinh{u})\frac{\partial}{\partial u_n}.
$$ 
The construction of the characteristic Lie algebra has an inductive nature.
We start with the first order differential operators $X_0, X_1$ and then consider the commutators of higher orders with  the participation of generators $X_0, X_1$.

Consider a linear span $\langle X_0, X_1, Y_1\rangle$, where $Y_1=[X_0, X_1]$.
Choose a new basis in $\langle X_0, X_1, Y_1\rangle$
$$
X_0'=X_0, X_1'=X_1+Y_1, X_2'=X_1-Y_1.
$$
It means that
$$
X_1'=\sum_{n=1}^{+\infty}D^{n-1}(e^u)\frac{\partial}{\partial u_n}, \quad
X_2'=-\sum_{n=1}^{+\infty}D^{n-1}(e^{-u})\frac{\partial}{\partial u_n}.
$$
We have 
$$
\begin{array}{c}
X_1'=e^u\sum_{n=1}^{+\infty}B_{n-1}(u_1,\dots,u_{n-1})\frac{\partial}{\partial u_n}, \\
X_2'=-e^{-u}\sum_{n=1}^{+\infty}B_{n-1}({-} u_1,\dots,{-}u_{n-1})\frac{\partial}{\partial u_n}.
\end{array}
$$
The elements $X_1', X_2'$ are of operator bigradings $(1,1), (1,-1)$ respectively.
Obviously
$$
[X_0',X_1']=X_1', \; [X_0',X_2']=-X_2'.
$$
It's easy to calculate the first terms of the commutator $[X_1', X_2']$
$$
X_3'=[X_1', X_2']=2 \left( \frac{\partial}{\partial u_2}+u_1^2\frac{\partial}{\partial u_4}+5u_1u_2\frac{\partial}{\partial u_5}+\dots \right)
$$
The operator $X_3'$ has operator bigrading $(2,0)$ (it means in particular that all its coefficients do not depend on 
variable $u$) and hence
$$
[X_0',X_3']=\left[\frac{\partial}{\partial u},X_3'\right]=0.
$$
Now we consider $X_4'=-[X_1',X_3']$ of operator bigrading $(3,1)$ and we also can write out some of its first terms
$$
X_4'=-[X_1',X_3']=2e^u\left(\frac{\partial}{\partial u_3}+u_1\frac{\partial}{\partial u_4}+(2u_1^2+u_2)\frac{\partial}{\partial u_5}+\dots\right)
$$
Evidently $[X_0',X_4']=X_4'$.

We define an operator $X_5'$ of operator bigrading $(3,-1)$ as
$$
X_5'=[X_2',X_3']=-2e^{-u}\left(\frac{\partial}{\partial u_3}-u_1\frac{\partial}{\partial u_4}+(2u_1^2-u_2)\frac{\partial}{\partial u_5}+\dots\right).
$$
Obviously 
$$
[X_0',X_5']=-X_5'.
$$
Now we need to involve the operator $D$ in our play.  It has operator bigrading $(-1,0)$.
We start with an obvious remark that  $[D,X_0']=0$.
It follows from (\ref{D[]}) that
\begin{equation}
[D,X_1']=-e^uX_0', \; [D,X_2']=e^{-u}X_0'.
\end{equation}
Hence we have
\begin{equation}
\begin{split}
[D,X_3']=\left[D,[X_1',X_2'] \right]=\left[[D,X_1'],X_2' \right]+\left[X_1',[D,X_2'] \right]=\\=-\left[e^uX_0',X_2' \right]+\left[X_1',e^{-u}X_0' \right]=e^uX_2'-e^{-u}X_1';\\
[D,X_4']=-\left[D,[X_1',X_3'] \right]=-\left[[D,X_1'],X_3' \right]-\left[X_1',[D,X_3'] \right]=\\=\left[e^uX_0',X_3' \right]-\left[X_1',e^uX_2'+e^{-u}X_1' \right]=-e^uX_3'.
\end{split}
\end{equation}

\begin{proposition} $[X_1', X_4']=[X_2',X_5']=0.$
\end{proposition}
\begin{proof}
$$
\begin{array}{c}
\left[D,[X_1',X_4'] \right]=\left[[D,X_1'],X_4' \right]+\left[X_1',[D,X_4'] \right]=\\=-\left[e^{u}X_0',X_4' \right]-\left[X_1', e^uX_3'\right]=-e^{u}X_4'+e^{u}X_4'=0.
\end{array}
$$
Also we have 
$$
\begin{array}{c}
[D,X_5']=\left[D,[X_2',X_3']\right]=\left[[D,X_2'],X_3' \right]+\left[X_2',[D,X_3'] \right]=\\
=-e^{-u}\left[X_0',X_3' \right]+\left[X_2', e^uX_2'+e^{-u}X_1'\right]=-e^{-u}X_3'.
\end{array}
$$
This implies
$$
\begin{array}{c}
[D,[X_2',X_5']]=\left[[D,X_2'],X_5' \right]+\left[X_2',[D,X_5'] \right]=\\
=-e^{-u}\left[X_0',X_5' \right]-e^{-u}\left[X_2',X_3' \right]=0.
\end{array}
$$
It follows from Lemma  \ref{lemma1} that both brackets $[X_1',X_4']$ and $[X_2',X_5']$ vanish.
\end{proof}
Now we define recursively 
$$
X_{3k+1}'=-[X_1', X_{3k}'], X_{3k+2}'=[X_2', X_{3k}'], X_{3k+3}'=[X_{1}', X_{3k+2}'], k \ge 1.  
$$
 $X_{3k+1}', X_{3k+2}', X_{3k+3}'$ have bigradings $(2k{+}1,1), (2k{+}1,-1), (2k{+}2,0)$ respectively.
\begin{equation}
[X_0',X_{3k{+}1}']=X_{3k{+}1}', [X_0',X_{3k{+}2}']={-}X_{3k{+}2}', [X_0',X_{3k}']=0.
\end{equation}
\begin{lemma} First order differential operators $X_{3k+1}', X_{3k+2}', X_{3k+3}', k \ge 0,$ are all non-trivial and satisfy the following relations
\begin{equation}
\label{recursion_relations_1}
\begin{split}
[D,X_{3k{+}1}']={-}e^uX_{3k}', [D,X_{3k{+}2}']=e^{{-}u}X_{3k}', \\
[D,X_{3k{+}3}']=-e^{-u}X_{3k{+}1}'+e^{u}X_{3k{+}2}'; 
\end{split}
\end{equation}
\end{lemma}
\begin{proof}
$$
\begin{array}{c}
[D,X_{3k+1}']=-\left[D,[X_1',X_{3k}'] \right]=-\left[[D,X_1'],X_{3k}' \right]-\left[X_1',[D,X_{3k}'] \right]=\\=\left[e^uX_0',X_{3k}' \right]-\left[X_1',{-}e^{-u}X_{3(k-1)+1}'+e^{u}X_{3(k-1)+2}' \right]=-e^{u}X_{3k}', 
\end{array}
$$
Second relation from (\ref{recursion_relations_1}) can be proved completely analogously. The third assertion is verified below
$$
\begin{array}{c}
[D,X_{3k+3}']=\left[D,[X_1',X_{3k+2}'] \right]=\left[[D,X_1'],X_{3k+2}' \right]+\left[X_1',[D,X_{3k+2}'] \right]=\\=-\left[e^uX_0',X_{3k+2}' \right]+\left[X_1',e^{-u}X_{3k}'\right]=e^{u}X_{3k+2}'-e^{-u}X_{3k+1}', 
\end{array}
$$
Non-triviality of $X_{3k+1}', X_{3k+2}', X_{3k+3}', k \ge 0,$ follows from Lemma \ref{lemma1} and (\ref{recursion_relations_1}).
\end{proof}
\begin{lemma}
Differential operators $X_0', X_1', X_2',\dots$ satisfy the following commutation relations
\begin{equation}
\begin{split}
\label{A_1_relations}
[X_{3l{+}1} ,X_{3k{+}1}]=0, \; [X_{3l{+}2}, X_{3k{+}2}]=0,\; 
[X_{3l},X_{3k}]=0, \\
[X_{3l{+}1} ,X_{3k{+}2}]=X_{3(k{+}l)+3}, \; \;  [X_{3l}, X_{3k{+}1}]=X_{3(k{+}l){+}1}, \\
 [X_{3l}, X_{3k{+}2}]=-X_{3(k{+}l){+}2}, \; k,l \ge 0; \\
\end{split}
\end{equation}
\end{lemma}
\begin{proof}
We prove (\ref{A_1_relations}) by recursion on $N=k+l$.  The basis of recursion  is $k+l=1$.
$$
\begin{array}{c}
[X_1', X_4'] =0, [X_2', X_5'] =0, [X_0', X_3'] =0,\\

[X_1', X_2'] =X_3', [X_0', X_1'] =X_1',  [X_0', X_2'] =-X_2', \\

[X_1', X_5'] =X_6', [X_0', X_4'] =X_4',  [X_0', X_5'] =-X_5', \\

[X_4', X_2'] =X_6', [X_3', X_1'] =X_4',  [X_3', X_2'] =-X_5',  
\end{array}
$$
We have already checked out almost all of these formulas. 
It only remains to verify the equality
$[X_4', X_2'] =X_6'$. Indeed
$$
\begin{array}{c}
\left[D, [X_4', X_2']-X_6' \right]=\left[[D,X_4'], X_2'\right]+\left[X_4', [D,X_2']\right]-\left[D,X_6' \right]=\\

=\left[-e^uX_3', X_2'\right]+\left[X_4', e^{-u}X_0'\right]+e^{-u}X_4'-e^{u}X_5'=0.
\end{array}
$$

Suppose that relations (\ref{A_1_relations}) have already been established for $k+l=N$, we now prove them for $k+l=N+1$.
$$
\begin{array}{c}
\left[D,[X_{3l+1}',X_{3k+1}']\right]=\left[[D,X_{3l+1}'],X_{3k+1}' \right]+\left[X_{3l+1}',[D,X_{3k+1}'] \right]=\\
=-e^u\left[X_{3l}',X_{3k+1}' \right]-e^u\left[X_{3l+1}', X_{3k}'\right]=0.
\end{array}
$$
Thereby it follows from Lemma \ref{lemma1} that $[X_{3l+1}',X_{3k+1}']{=}0$. The relations  $[X_{3l{+}2}, X_{3k{+}2}]{=}0$ and 
$[X_{3l},X_{3k}]{=}0$ can be verified absolutely analogously to the previous case. 

Now we turn to the second group of relations (\ref{A_1_relations})
$$
\begin{array}{c}
\left[D,[X_{3l{+}1}',X_{3k{+}2}']-X_{3(k{+}l){+}3}'\right]=\\=\left[[D,X_{3l{+}1}'], X_{3k{+}2}'\right]{+}\left[X_{3l{+}1}', [D,X_{3k{+}2}']\right]{-}\left[D,X_{3(k{+}l){+}3}' \right]=\\

=\left[-e^uX_{3l}',X_{3k{+}2}'\right]+\left[X_{3l{+}1}', e^{-u}X_{3k}'\right]+e^{-u}X_{3(k{+}l){+}1}'-e^{u}X_{3(k{+}l){+}2}'=0.
\end{array}
$$
We leave to the reader in the form of an exercise the proof of the relation $[X_{3l}, X_{3k{+}1}]=X_{3(k{+}l){+}1}$. We finish the proof of our Lemma by verifying the last equality in (\ref{A_1_relations}).
$$
\begin{array}{c}
\left[D, [X_{3l}', X_{3k{+}2}']+X_{3(k{+}l)+2}'\right]=\\=\left[[D,X_{3l}'], X_{3k{+}2}'\right]{+}\left[X_{3l}', [D,X_{3k{+}2}']\right]+\left[D,X_{3(k{+}l){+}2}' \right]=\\

=\left[e^uX_{3(l{-}1)+2}'-e^{-u}X_{3l-2}',X_{3k{+}2}'\right] +e^{-u}\left[X_{3l}',X_{3k}'\right]+e^{-u}X_{3(k{+}l)}'=0.
\end{array}
$$
\end{proof}

\end{proof}
\begin{corollary}
1) The characteristic Lie algebra $\chi(\sin{u})$ of the sin-Gordon equation 
$u_{xy}=\sin{u}$
 is isomorphic to the non-negative part 
$${\mathcal L({\mathfrak so}(2,1),{\mathbb K})}_{\ge 0}={\mathfrak so}(2,1)\otimes {\mathbb K}[t]$$
of the loop algebra 
${\mathcal L}({\mathfrak so}(2,1), {\mathbb K})={\mathfrak so}(2,1)\otimes {\mathbb K}[t, t^{{-}1}]$.

2) the loop algebras ${\mathcal L}({\mathfrak so}(2,1),{\mathbb K})$ and  ${\mathcal L}({\mathfrak sl}(2, {\mathbb K}))$ are non-isomorphic over ${\mathbb K}{=}{\mathbb R}$ and 
are isomorphic over ${\mathbb K}{=}{\mathbb C}$.
\end{corollary}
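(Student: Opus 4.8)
The plan is to rerun the inductive construction from the proof of Theorem~\ref{sinh-Gordon_theorem} with $\sin u$ in place of $\sinh u$. Put $X_0=\frac{\partial}{\partial u}$, $X_1=X(\sin u)=\sum_{n\ge1}D^{n-1}(\sin u)\frac{\partial}{\partial u_n}$, and $Y_1=[X_0,X_1]=X(\cos u)$. The relation $\bigl[\frac{\partial}{\partial u},X(f)\bigr]=X(f_u)$ gives $[X_0,Y_1]=X(-\sin u)=-X_1$, so on $\langle X_1,Y_1\rangle$ the operator $\mathrm{ad}\,X_0$ acts by the rotation $X_1\mapsto Y_1\mapsto-X_1$. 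Over ${\mathbb C}$ we may diagonalize it: the vectors $X_1\mp iY_1=X(\sin u\mp i\cos u)=\mp i\,X(e^{\pm iu})$ are eigenvectors with eigenvalues $\pm i$, i.e.\ exactly the two exponential generators of the $\sinh$--case transported by $u\mapsto iu$. Thus the complexification is generated by the same data as $\chi(\sinh u)\otimes{\mathbb C}$, and Theorem~\ref{sinh-Gordon_theorem} yields $\chi(\sin u)\otimes{\mathbb C}\cong{\mathcal L}({\mathfrak sl}(2,{\mathbb C}))^{\ge0}$.

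For the real statement I would stay over ${\mathbb K}$ and push the recursion through with the real pair $X_1,Y_1$ together with $Z_1=[X_1,Y_1]$: bracketing successively with $X_1$ and $Y_1$ produces, grading by grading, triples of first order operators, and Lemma~\ref{lemma1} forces the same vanishing relations as in Theorem~\ref{sinh-Gordon_theorem}, so that $\chi(\sin u)$ is naturally graded of width two with real structure constants. The decisive step is to identify these constants with a concrete matrix model. I would match the generators in each grading against the polynomial matrices $u_{2k-1}$, $v_{2k-1}^{-}$, $w_{2k}^{-}$ introduced before the Proposition on ${\mathfrak n}_1^{\pm}$, whose brackets $[u_{2k-1},v_{2l-1}^{-}]=w_{2(k+l)-2}^{-}$, $[v_{2k-1}^{-},w_{2l}^{-}]=-u_{2(k+l)-1}$, $[w_{2k}^{-},u_{2l+1}]=v_{2(k+l)-1}^{-}$ span the naturally graded part of ${\mathfrak so}(2,1)\otimes{\mathbb K}[t]$. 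Verifying that the signs of the real brackets generated by $X_1,Y_1,Z_1$ reproduce this ${\mathfrak so}(2,1)$ pattern then gives $\chi(\sin u)\cong{\mathcal L}({\mathfrak so}(2,1),{\mathbb K})^{\ge0}={\mathfrak so}(2,1)\otimes{\mathbb K}[t]$, the toral generator $X_0$ restoring the full characteristic algebra from its loop part.

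For part~(2) the two claims are complementary. Over ${\mathbb C}$ one has ${\mathfrak so}(2,1)\otimes{\mathbb C}\cong{\mathfrak sl}(2,{\mathbb C})\cong{\mathfrak sl}(2,{\mathbb K})\otimes{\mathbb C}$, and tensoring with ${\mathbb C}[t,t^{-1}]$ gives ${\mathcal L}({\mathfrak so}(2,1),{\mathbb C})\cong{\mathcal L}({\mathfrak sl}(2,{\mathbb C}))$, so the loop algebras are isomorphic over ${\mathbb C}$. For non-isomorphism over ${\mathbb R}$ I would use that any isomorphism of these ${\mathbb Z}$-graded loop algebras restricts to an isomorphism of the degree-zero fibres, hence of the underlying finite-dimensional real forms of ${\mathfrak sl}(2,{\mathbb C})$; since these are the two inequivalent real forms, distinguished by the definiteness of the Killing form (compact versus split), no isomorphism over ${\mathbb R}$ can exist. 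This is precisely the loop-algebra counterpart of the Proposition of \cite{Mill} asserting that ${\mathfrak n}_1^{+}$ and ${\mathfrak n}_1^{-}$ are isomorphic over ${\mathbb C}$ but not over ${\mathbb R}$.

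The main obstacle is the real-form identification of the second paragraph. Over ${\mathbb C}$ everything collapses onto the already proved $\sinh$--case, so all the real content lies in carrying the reality structure through the recursion and confirming that the emergent sign pattern is that of the model $u,v^{-},w^{-}$ inside ${\mathfrak so}(2,1)$ rather than that of $v^{+},w^{+}$ inside ${\mathfrak so}(3)$. This sign bookkeeping under the elliptic action of $\mathrm{ad}\,X_0$ is exactly the delicate point absent from the earlier $\sinh$ argument, where $\mathrm{ad}\,X_0$ was already diagonalizable over ${\mathbb K}$, and it is where the correct real form must be pinned down.
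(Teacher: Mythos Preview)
The paper states this corollary without proof, so there is nothing to compare against; your sketch must stand on its own. The complex part of (1) and your reduction to the recursion of Theorem~\ref{sinh-Gordon_theorem} are fine, but there is a genuine error in part~(2) that also undermines the real identification in~(1). You assert that ${\mathfrak so}(2,1)$ and ${\mathfrak sl}(2,{\mathbb R})$ are ``the two inequivalent real forms, distinguished by the definiteness of the Killing form (compact versus split)''. This is false: ${\mathfrak so}(2,1)\cong{\mathfrak sl}(2,{\mathbb R})$ over ${\mathbb R}$; both are the \emph{split} form, both have indefinite Killing form. The compact real form is ${\mathfrak so}(3,{\mathbb R})\cong{\mathfrak su}(2)$. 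Hence your non-isomorphism argument in~(2) collapses, and the statement as printed is literally false. Given the Introduction's claim that $\chi(\sin u)\not\cong\chi(\sinh u)$ over ${\mathbb R}$, together with the paper's own model ${\mathfrak n}_1^{+}\subset{\mathfrak so}(3,{\mathbb K})\otimes{\mathbb K}[t]$, the corollary almost certainly contains a misprint and should read ${\mathfrak so}(3)$ throughout, matching the matrices $u,v^{+},w^{+}$ rather than $u,v^{-},w^{-}$. Your final paragraph correctly flags this sign dichotomy as the crux, but you picked the wrong branch.

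There is a second, smaller gap in~(2): even with the correct pair of real forms, the claim that an isomorphism of loop algebras ``restricts to an isomorphism of the degree-zero fibres'' presupposes that the isomorphism respects the ${\mathbb Z}$-grading, which is not automatic for an abstract Lie algebra isomorphism. One must argue invariantly---for instance, reduce to the commutants and invoke the proposition from \cite{Mill} quoted in the paper, which asserts that the naturally graded positive parts ${\mathfrak n}_1^{+}$ and ${\mathfrak n}_1^{-}$ are non-isomorphic over ${\mathbb R}$.
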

\begin{table}
\caption{Correspondence table of different gradings of $\chi(\sinh{u})$.}
\label{bigraded_struct_n_1}
\begin{tabular}{|c|c|c|c|c|c|c|}
\hline
&&&&&&\\[-10pt]
width one&$X_0'$  &$X_1'$ &$X_2'$&$X_{3k}'$&$X_{3k+1}'$&$X_{3k+2}'$\\
&&&&&&\\[-10pt]
\hline
&&&&&&\\[-10pt]natural & &$1$&$1$&$2k$&$2k{+}1$&$2k{+}1$ \\[2pt]
\hline
&&&&&&\\[-10pt]canonical &$(0,0)$ &$(1,0)$&$(0,1)$&$(k,k)$&$(k{+}1,k)$&$(k,k{+}1)$ \\[2pt]
\hline
&&&&&&\\[-10pt]${\mathbb Z}_{\ge0}{\times}{\mathbb Z}_3$ &$(0,0)$ &$(1,1)$&$(1,{-}1)$&$(k,0)$&$(k,1)$&$(k,{-}1)$ \\[2pt]
\hline
\end{tabular}
\end{table}

\section{Tzitzeica equation}

\begin{theorem}
\label{Tzitzeica_theorem}
The characteristic Lie algebra $\chi(e^u{+}e^{-2u})$ of the Tzitzeica equation 
$$u_{xy}=e^u+e^{-2u}$$
is  isomorphic to the non-negative part 
$$
{\mathcal L}({\mathfrak sl}(3,{\mathbb K}), \mu)^{\ge0}=\bigoplus_{j=0}^{+\infty}{\mathfrak g}_{j ({\rm mod} \; 2)} \otimes t^j, {\mathfrak sl}(3,{\mathbb K})={\mathfrak g}_0\oplus {\mathfrak g}_1, 
 [{\mathfrak g}_{\alpha},{\mathfrak g}_{\beta}] \subset {\mathfrak g}_{\alpha{+}\beta ({\rm mod} \;2)}, 
$$ 
of the twisted loop algebra  ${\mathcal L}({\mathfrak sl}(3,{\mathbb K}), \mu)=\bigoplus_{j \in {\mathbb Z}}{\mathfrak g}_{j ({\rm mod} \; 2)} \otimes t^j$, where $\mu$ is a diagram automorphism of ${\mathfrak sl}(3,{\mathbb K})$, $\mu^2={\rm Id}$, and ${\mathfrak g}_0$,  ${\mathfrak g}_1$ are eigen-spaces of $\mu$ corresponding to eigen-values $1,{-}1$ respectively. In particular ${\mathfrak g}_0$ is a subalgebra in ${\mathfrak sl}(3,{\mathbb K})$ isomorphic to ${\mathfrak so}(3,{\mathbb K})$ \cite{Kac}.

The Lie algebra $\chi(e^u{+}e^{-2u})$ is generated by three elements $Y_0', Y_1', Y_2'$ that satisfy the following  relations
\begin{eqnarray}
[Y_0', Y_1']=Y_1', \; [Y_0, Y_2']={-}2Y_2',\\
\left[Y_1',[Y_1',[Y_1',[Y_1',[Y_1',Y_2']]_{\dots}\right]=0, \; \left[Y_2', [Y_2', Y_1'] \right]=0.
\end{eqnarray}
 It is a pro-solvable infinite-dimensional Lie algebra. Its subalgebra $\chi(e^u{+}e^{-2u})^+$ generated by two elements
$Y_1', Y_2'$ is isomorphic to
 the (nilpotent) positive part $N(A_2^{(2)})$ of the Kac-Moody algebra $A_2^{(2)}=\hat {\mathcal L}({\mathfrak sl}(3,{\mathbb K}), \mu)={\mathcal L}({\mathfrak sl}(3,{\mathbb K}), \mu) \oplus {\mathbb K}c\oplus {\mathbb K}d$.
\end{theorem}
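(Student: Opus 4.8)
The plan is to mimic the proof of Theorem~\ref{sinh-Gordon_theorem}, exploiting that the right-hand side $f(u)=e^u+e^{-2u}$ is a sum of two quasi-monomials. Since ${\rm ad}\,X_0$ multiplies $X(e^{pu})$ by $p$, the linear span $\langle X_0,X_1,[X_0,X_1]\rangle$ already contains $Y_1':=X(e^u)=\frac13(2X_1+[X_0,X_1])$ and $Y_2':=X(e^{-2u})=\frac13(X_1-[X_0,X_1])$; hence $Y_1',Y_2'\in\chi(f)$, and with $Y_0':=X_0$ the Lie algebra $\chi(f)$ is generated by $Y_0',Y_1',Y_2'$. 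The relations $[Y_0',Y_1']=Y_1'$ and $[Y_0',Y_2']=-2Y_2'$ are then immediate, and (\ref{D[]}) gives $[D,Y_1']=-e^uY_0'$, $[D,Y_2']=-e^{-2u}Y_0'$, $[D,Y_0']=0$. Every iterated commutator of $Y_1',Y_2'$ lies in the bigraded Lie algebra ${\rm Diff}\,{\mathcal F}$ of first-order operators with quasipolynomial coefficients, whereas $Y_0'\notin{\rm Diff}\,{\mathcal F}$; I would work inside $\hat {\rm Diff}\,{\mathcal F}$ and use the operator bigrading — in particular the decomposition into ${\rm ad}\,Y_0'$-eigenspaces — as the main bookkeeping device.

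Next I would construct, recursively, a family of first-order operators $Y_3'=[Y_1',Y_2'],\ Y_4',\ Y_5',\dots$ — one representative for each homogeneous component — by bracketing with $Y_1'$ or $Y_2'$ according to a fixed $8$-periodic scheme chosen so that the indexing matches the basis $f_1,f_2,\dots$ of ${\mathfrak n}_2=N(A_2^{(2)})$ from (\ref{basis_n_2}). Simultaneously, using $[D,[A,B]]=[[D,A],B]+[A,[D,B]]$, I would express each $[D,Y_i']$ as a $C^\omega(\Omega)$-linear combination of $Y_0'$ and operators $Y_j'$ of strictly smaller loop level. An induction on the loop level, with Lemma~\ref{lemma1} (an operator commuting with $D$ vanishes) as the engine, should then prove at once that every $Y_i'$ is non-zero and that $[Y_q',Y_l']=d_{q,l}\,Y_{q+l}'$ with the constants $d_{q,l}$ of Table~\ref{structure_const_n_2}. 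Since $d_{q,l}$ depends only on $q,l\bmod 8$, this bracket computation reduces to a finite list of base cases (the brackets among $Y_0',\dots,Y_7'$ and their first loop-translates) plus one uniform inductive step; the vanishing entries of the table subsume, in particular, the Serre relations $[Y_2',[Y_2',Y_1']]=0$ and $({\rm ad}\,Y_1')^5(Y_2')=0$, as well as $[Y_0',Y_i']=d_{0,i}Y_i'$.

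It then remains to assemble the isomorphism. The operators $\{Y_i'\}_{i\ge 1}$ span the subalgebra $\chi(f)^+$ generated by $Y_1',Y_2'$ (by the previous step they are closed under bracketing with the generators) and are linearly independent — those in distinct bigraded components automatically, and the two within each two-dimensional component by comparing leading coefficients — so $Y_i'\mapsto f_i$ is a bijective linear map $\chi(f)^+\to{\mathfrak n}_2$ matching the structure constants (\ref{str_n_2}), hence a Lie algebra isomorphism $\chi(f)^+\xrightarrow{\sim}{\mathfrak n}_2=N(A_2^{(2)})$; equivalently, since the Serre relations hold in $\chi(f)^+$ and form a defining set for $N(A_2^{(2)})$, the induced surjection $N(A_2^{(2)})\to\chi(f)^+$ is an isomorphism because it carries a basis to a basis. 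Because ${\mathfrak n}_2$ has codimension one in ${\mathcal L}({\mathfrak sl}(3,{\mathbb K}),\mu)^{\ge0}=\langle f_0,f_1,f_2,\dots\rangle$ and ${\rm ad}\,Y_0'$ acts on $Y_i'$ exactly as ${\rm ad}\,f_0$ acts on $f_i$ (the first row of Table~\ref{structure_const_n_2}), extending by $Y_0'\mapsto f_0$ yields $\chi(e^u{+}e^{-2u})\cong{\mathcal L}({\mathfrak sl}(3,{\mathbb K}),\mu)^{\ge0}$; in particular $\chi(f)={\mathbb K}Y_0'\oplus_\ltimes\chi(f)^+$ is pro-solvable with $\chi(f)^+\cong N(A_2^{(2)})$.

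The main obstacle is the recursion of the second paragraph. Because ${\mathfrak n}_2$ has width two rather than one, the scheme is $8$-periodic, not $3$-periodic as in the sinh-Gordon case, and one must select generators for the two-dimensional graded components — the analogues of the $a_i,b_i$ splitting used to exhibit ${\mathfrak n}_2$ as a naturally graded algebra of width two — so that the $[D,-]$-recursion closes consistently upon itself. The single hardest relation is $({\rm ad}\,Y_1')^5(Y_2')=0$: to deduce it from Lemma~\ref{lemma1} one first needs enough of the lower-level identities of Table~\ref{structure_const_n_2} to guarantee that $[D,({\rm ad}\,Y_1')^5(Y_2')]=0$, so the order in which the table entries are verified is itself part of the argument.
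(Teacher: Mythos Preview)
Your proposal is correct and follows essentially the same route as the paper: the same change of generators $Y_0',Y_1',Y_2'$, the same $8$-periodic recursive definition of $Y_n'$, the same inductive computation of $[D,Y_n']$ together with Lemma~\ref{lemma1} to establish non-triviality and the relations $[Y_q',Y_l']=d_{q,l}Y_{q+l}'$, and the same identification $Y_n'\mapsto f_n$. Your treatment is in fact slightly more careful than the paper's in one respect: you explicitly flag the linear-independence check for the pairs $Y_{8k+1}',Y_{8k+2}'$ and $Y_{8k+6}',Y_{8k+7}'$ lying in the same natural-grading component (the paper tacitly relies on the operator bigrading to separate them), and you note the alternative route via the Serre presentation of $N(A_2^{(2)})$.
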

\begin{proof}
Our proof will consist in constructing an infinite basis $Y_1', Y_2', Y_3', Y_4',\dots$ of $\chi(e^u{+}e^{-2u})$ and verifying that basic fields $Y_n, n \ge 1,$ satisfy the commutation relations (\ref{str_n_2}) of ${\mathcal L}({\mathfrak sl}(3,{\mathbb K}), \mu)^{\ge0}$.

 We recall that by definition the characteristic Lie algebra $\chi(e^u{+}e^{-2u})$ is the Lie algebra generated by two operators
$$
X_0=\frac{\partial}{\partial u}, \; X_1=\sum_{n=1}^{+\infty}D^{n-1}(e^{u}{+}e^{-2u})\frac{\partial}{\partial u_n}.
$$
Consider a linear span $\langle X_0, X_1, Y_1\rangle$, where $Y_1=[X_0, X_1]$.
Let introduce a new basis  $Y_0', Y_1', Y_2'$ in $\langle X_0, X_1, Y_1\rangle$, where
$$
Y_0'=X_0, Y_1'=\frac{2}{3}X_1+\frac{1}{3}Y_1, Y_2'=\frac{1}{3}X_1-\frac{1}{3}Y_1.
$$
We recall explicit expressions for $Y_1'$ and $Y_2'$ in terms of Bell polynomials
\begin{equation}
\label{Y_1_2}
\begin{array}{c}
Y_1'=\sum_{n=1}^{+\infty}D^{n-1}(e^u)\frac{\partial}{\partial u_n}=e^u\sum\limits_{n=1}^{+\infty}B_{n-1}(u_1,\dots,u_{n-1})\frac{\partial}{\partial u_n}, \\
Y_2''=\sum_{n=1}^{+\infty}D^{n-1}(e^{-2u})\frac{\partial}{\partial u_n}=e^{-2u}\sum\limits_{n=1}^{+\infty}B_{n-1}({-}2 u_1,\dots,{-}2u_{n-1})\frac{\partial}{\partial u_n}.
\end{array}
\end{equation}
Obviously we have
$$
[Y_0',Y_1']=Y_1', \; [Y_0',Y_2']=-2Y_2'.
$$
It's easy to calculate the first terms of the expansion for $[Y_1', Y_2']$
$$
Y_3'=[Y_1', Y_2']=-3e^{-u} \left( \frac{\partial}{\partial u_2}-2u_1\frac{\partial}{\partial u_3}+(5u_1^2-3u_2)\frac{\partial}{\partial u_4}+\dots \right)
$$
The operator $Y_3'$ has operator bigrading $(2,{-}1)$ and 
$$
[Y_0',Y_3']=\left[\frac{\partial}{\partial u},Y_3'\right]=-Y_3'.
$$
Now we consider $Y_4'=[Y_1',Y_3']$ (it has operator bigrading $(3,0)$) and we can write down the first terms of the expansion of $Y_4'$
$$
Y_4'=[Y_1',Y_3']=9\left(\frac{\partial}{\partial u_3}-2u_1\frac{\partial}{\partial u_4}+(5u_1^2-5u_2)\frac{\partial}{\partial u_5}+\dots\right)
$$
All the coefficients of the differential operator $Y_4'$ do not depend on the variable $u$ and hence
$
[Y_0',Y_4']=0.
$
We define an operator $Y_5'$ of bigrading $(4,1)$ by
$$
Y_5'=-\frac{1}{3}[Y_1',Y_4']=9e^u\left( \frac{\partial}{\partial u_4}-u_1\frac{\partial}{\partial u_5}+(4u_1^2-6u_2)\frac{\partial}{\partial u_6}+\dots\right).
$$
Obviously $[Y_0',Y_5']=Y_5'$.
We recall that  $[D,Y_0']=0$. Then we deduce that
$$
[D,Y_1']=\sum_{i=1}^{+\infty}D(D^{i-1}(e^u))\frac{\partial}{\partial u_i}- e^u\frac{\partial}{\partial u}-\sum_{i=1}^{+\infty}D^{i}(e^u)\frac{\partial}{\partial u_i}=-e^u\frac{\partial}{\partial u}=-e^uY_0'.
$$
Similarly, we conclude that
$[D,Y_2']=-e^{-2u}Y_0'$.
It holds  that
$$
\begin{array}{c}
[D,Y_3']=\left[D,[Y_1',Y_2'] \right]=\left[[D,Y_1'],Y_2' \right]+\left[Y_1',[D,Y_2'] \right]=\\=-\left[e^uY_0',Y_2' \right]-\left[Y_1',e^{-2u}Y_0' \right]=2e^uY_2'+e^{-2u}Y_1'.
\end{array}
$$
\begin{proposition} $[Y_2', Y_3']=[Y_2',Y_4']=0.$
\end{proposition}
\begin{proof}
$$
\begin{array}{c}
\left[D,[Y_2',Y_3'] \right]=\left[[D,Y_2'],Y_3' \right]+\left[Y_2',[D,Y_3'] \right]=\\=-\left[e^{-2u}Y_0',X_3' \right]+\left[Y_2', 2e^uY_2'+e^{-2u}Y_1'\right]=e^{-2u}Y_3'+e^{-2u}\left[Y_2',Y_1'\right]=0.
\end{array}
$$
Consider the second commutator $[Y_2',Y_4']$
$$
\begin{array}{c}
[D,Y_4']=\left[D,[Y_1',Y_3']\right]=\left[[D,Y_1'],Y_3' \right]+\left[Y_1',[D,Y_3'] \right]=\\
=-e^u\left[Y_0',Y_3' \right]+\left[Y_1', 2e^uY_2'+e^{-2u}Y_1'\right]=e^uY_3'+2e^{2u}Y_3'=3e^uY_3'.
\end{array}
$$
Hence it implies that
$$
\begin{array}{c}
[D,[Y_2',Y_4']]=\left[[D,Y_2'],Y_4' \right]+\left[Y_2',[D,Y_4'] \right]
=-e^{-2u}\left[Y_0',Y_4' \right]+3e^{2u}\left[Y_2',Y_3' \right]=0.
\end{array}
$$
It follows from Lemma  \ref{lemma1} that both brackets $[Y_2',Y_4']$ and $[Y_2',Y_3']$ vanish.
\end{proof}
Now  it's the turn of $[D,Y_5']$.
$$
\begin{array}{c}
-3[D,Y_5']=\left[[D,Y_1'],Y_4' \right]+\left[Y_1',[D,Y_4'] \right]
=-e^u\left[Y_0',Y_4' \right]+\left[Y_1', 3e^uY_3'\right]=3e^uY_4'.
\end{array}
$$
We define the sixth element $Y_6'$ of our basis with operator bigrading $(5,2)$ 
$$
Y_6'=-\frac{1}{2}[Y_1',Y_5']=-9e^{2u}\left( \frac{\partial}{\partial u_5}+u_1\frac{\partial}{\partial u_6}+\dots\right).
$$
Obviously 
$
[Y_0', Y_6']=2Y_6'.
$
\begin{proposition}
$
[Y_1', Y_6']=0.
$
\end{proposition}
\begin{proof}
$$
\begin{array}{c}
-2[D,Y_6']=\left[D,[Y_1',Y_5']\right]=\left[[D,Y_1'],Y_5' \right]+\left[Y_1',[D,Y_5'] \right]=\\
=-e^u\left[Y_0',Y_5' \right]- e^u\left[Y_1',Y_4'\right]=2e^uY_5'.
\end{array}
$$
After that we can calculate the commutator $[D,[Y_1',Y_6']]$
$$
\begin{array}{c}
[D,[Y_1',Y_6']]=\left[[D,Y_1'],Y_6' \right]+\left[Y_1',[D,Y_6'] \right]=\\
=-e^{u}\left[Y_0',Y_6' \right]-\left[Y_1',e^{u}Y_5' \right]=-2e^{u}Y_6'-e^{u}\left[Y_1',Y_5' \right]=0.
\end{array}
$$
Hence $[Y_1',Y_6']$ vanishes.
\end{proof}
We define 
$
Y_7'=[Y_2', Y_5'].
$
The operator $Y_7'$ has operator bigrading $(5,-1)$. One can verify that
$$
[Y_0',Y_7']=-Y_7', \; [D, Y_7']=-e^{-2u}Y_5'.
$$
Indeed
$$
\begin{array}{c}
[D, Y_7']=\left[D,[Y_2',Y_5'] \right]=\left[[D,Y_2'],Y_5' \right]+\left[Y_2',[D,Y_5'] \right]=\\
=-e^{-2u}\left[Y_0',Y_5' \right]-\left[Y_2',e^{u}Y_4' \right]=-e^{-2u}Y_5'.
\end{array}
$$
Remark that 
$$
\begin{array}{c}
\left[D,[Y_3',Y_4'] \right]=\left[[D,Y_3'],Y_4' \right]+\left[Y_3',[D,Y_4'] \right]=\\
=\left[e^{-2u}Y_1'+2e^uY_2',Y_4' \right]-\left[Y_2',3e^uY_3' \right]=e^{-2u}\left[Y_1',Y_4' \right]=-3e^{-2u}Y_5'.
\end{array}
$$
It follows from Lemma \ref{lemma1} that 
$[Y_3',Y_4']=3Y_7'$.
We set 
$$
Y_8'=[Y_1', Y_7'].
$$
The operator $Y_8'$ has bigrading $(6,0)$. We need also the following two relations
$$
[Y_0', Y_8']=0, \; [D, Y_8']=2e^{-2u}Y_6'+e^uY_7'.
$$
Let prove them
$$
\begin{array}{c}
[D,Y_8']=\left[D,[Y_1',Y_7'] \right]=\left[[D,Y_1'],Y_7' \right]+\left[Y_1',[D,Y_7'] \right]=\\
=-e^{u}\left[Y_0',Y_7' \right]-\left[Y_1',e^{-2u}Y_5' \right]=2e^{-2u}Y_6'+e^uY_7'.
\end{array}
$$
Besides this
$$
\begin{array}{c}
\left[D,[Y_2',Y_7'] \right]=\left[[D,Y_2'],Y_7' \right]+\left[Y_2',[D,Y_7'] \right]
={-}e^{-2u}\left[Y_0',Y_7' \right]+\left[Y_2',{-}e^{-2u}Y_5' \right]=0.
\end{array}
$$
We sum up the first results of our calculations and collect the obtained relations
\begin{equation}
\label{base_recursion}
\begin{split}
[D,Y_0']=0,
[D,Y_1']={-}e^{u}Y_0',
[D,Y_2']={-}e^{-2u}Y_0', \\
[D,Y_3']=2e^uY_2'+e^{-2u}Y_1',
[D,Y_4']=3e^uY_3',
[D,Y_5']={-}e^uY_4',\\
[D,Y_6']={-}e^uY_5',
[D,Y_7']=-e^{-2u}Y_5',
[D,Y_8']=2e^{-2u}Y_6'+e^uY_7';\\
[Y_2',Y_3']=[Y_2',Y_4']=[Y_2',Y_7']=0.
\end{split}
\end{equation}
It is time to define all the vectors of our infinite basis. We do this with the help of recursive formulas (we recall that  $Y_1', Y_2'$ are defined by (\ref{Y_1_2}))
\begin{equation}
\label{basis_definition}
\begin{split}
Y_{8k+3}'=[Y_{1}', Y_{8k+2}'], Y_{8k+4}'=[Y_{1}',Y_{8k+3}'], Y_{8k+5}'=-\frac{1}{3}[Y_{1}', Y_{8k+4}'],\\
 Y_{8k+6}'=-\frac{1}{2}[Y_1',Y_{8k+5}'], Y_{8k+7}'=[Y_2',Y_{8k+5}'], Y_{8k+8}'=[Y_{1}', Y_{8k+7}'], \\
Y_{8k+9}'=-[Y_1',Y_{8k+8}'], Y_{8k+10}'=\frac{1}{2}[Y_2',Y_{8k+8}'],\quad k \ge 0.
\end{split}
\end{equation}
By induction, it is easy to establish that they are eigenvectors of the operator ${\rm ad} Y_0'$
\begin{equation}
\begin{split}
[Y_0',Y_{8k{+}1}']{=}Y_{8k{+}1}', [Y_0',Y_{8k{+}2}']{=}{-}2Y_{8k{+}2}', [Y_0',Y_{8k{+}3}']{=}{-}Y_{8k{+}3}', \\ [Y_0',Y_{8k{+}4}']{=}0,
 [Y_0',Y_{8k{+}5}']{=}Y_{8k{+}5}',  [Y_0',Y_{8k{+}6}']{=}2Y_{8k{+}6}', \\ [Y_0',Y_{8k{+}7}']{=}{-}Y_{8k{+}7}', [Y_0',Y_{8k+8}']{=}0.
\end{split}
\end{equation}
\begin{lemma} Operators $Y_n', n \ge 1,$ defined by (\ref{basis_definition}) are all non-trivial. More precisely they satisfy the following relations
\begin{equation}
\label{recursion_relations}
\begin{split}
[D,Y_{8k{+}1}']{=}{-}e^uY_{8k}', [D,Y_{8k{+}2}']{=}{-}e^{{-}2u}Y_{8k}', \\ [D,Y_{8k{+}3}']{=}e^{-2u}Y_{8k{+}1}'{+}2e^{u}Y_{8k{+}2}',
 [D,Y_{8k{+}4}']{=}3e^uY_{8k{+}3}', \\ [D,Y_{8k{+}5}']{=}{-}e^{u}Y_{8k{+}4}', [D,Y_{8k{+}6}']{=}{-}e^{u}Y_{8k{+}5}',\\
[D,Y_{8k{+}7}']{=}{-}e^{-2u}Y_{8k{+}5}', [D,Y_{8k{+}8}']{=}e^uY_{8k{+}7}'{+}2e^{-2u}Y_{8k{+}6}';\\
[Y_2',Y_{8k+2}']=[Y_2',Y_{8k+3}']=[Y_2',Y_{8k+4}']=[Y_2',Y_{8k+7}']=0, \; k \ge 0.
\end{split}
\end{equation}
\end{lemma}
\begin{proof}
We prove the lemma and (\ref{recursion_relations}) by induction on $k$. We have allready verified  the case $k=0$ (see (\ref{base_recursion})).  Suppose that the formulas
(\ref{recursion_relations}) are true for all $l \le k-1$, we prove them for $k$. 
$$
\begin{array}{c}
 [D,Y_{8k{+}1}']{=}{-}\left[D,[Y_{1}',Y_{8k}']\right]{=}{-}\left[[D,Y_{1}'],Y_{8k}'\right]{-}
\left[Y_{1}',[D,Y_{8k}']\right]{=}\\
{=}[e^uY_{0}',Y_{8k}']{-}[Y_{1}',e^{u}Y_{8k-1}'+2e^{-2u}Y_{8k-2}']{=}{-}e^uY_{8k}';\\

 [D,Y_{8k{+}2}']{=}\left[D,[Y_2',Y_{8k}']\right]{=}\left[[D,Y_2'],Y_{8k}'\right]{+}
\left[Y_2',[D,Y_{8k}']\right]{=}\\
{=}[{-}e^uY_{0}', Y_{8k}']{+}[Y_2',e^{u}Y_{8k-1}'+2e^{-2u}Y_{8k-2}']{=}[Y_2',2e^{-2u}Y_{8k-2}']={-}e^{-2u}Y_{8k}'.
\end{array}
$$
We skip some evident steps in our calculations and continue
$$
\begin{array}{c}
 [D,Y_{8k{+}3}']
{=}[{-}e^uX_{0}', Y_{8k{+}2}']{+}[Y_{1}',{-}e^{-2u}Y_{8k}']{=}2e^uY_{8k{+}2}'{+}e^{-2u}Y_{8k{+}1}'; \\
\left[D,[Y_{2}',Y_{8k{+}2}']\right]
{=}[{-}e^{-2u}Y_{0}', Y_{8k{+}2}']{+}[Y_{2}',{-}e^{-2u}Y_{8k}']{=}0;\\

 [D,Y_{8k{+}4}']
{=}[{-}e^uY_{0}', Y_{8k{+}3}']{+}[Y_{1}',2e^uY_{8k{+}2}'{+}e^{-2u}Y_{8k{+}1}']{=}3e^uY_{8k{+}3}'.
\end{array}
$$
The relations 
$
\left[D,[Y_{2}',Y_{8k{+}3}']\right]=
\left[D,[Y_{2}',Y_{8k{+}4}']\right]=0.
$
are verified completely analogously.  Next two steps are
$$
\begin{array}{c}
 -3[D,Y_{8k{+}5}']{=}\left[D,[Y_{1}',X_{8k+4}']\right]{=}\left[[D,Y_{1}'],Y_{8k+4}'\right]{+}
\left[X_{1}',[D,Y_{8k+4}']\right]{=}\\
{=}[{-}e^uY_{0}', Y_{8k{+}4}']{+}[X_{1}',3e^uY_{8k{+}3}']{=}3e^uY_{8k{+}4}',\\

 -2[D,Y_{8k{+}6}']{=}\left[D,[Y_{1}',Y_{8k+5}']\right]{=}\left[[D,Y_{1}'],X_{8k+5}'\right]{+}
\left[X_{1}',[D,Y_{8k+5}']\right]{=}\\
{=}[{-}e^uY_{0}', Y_{8k{+}5}']{+}[Y_{1}',{-}e^uY_{8k{+}4}']{=}-e^uY_{8k{+}5}'+3e^uY_{8k{+}5}'=2e^uY_{8k{+}5}'.
\end{array}
$$
We leave the verifying of the following two relations as an exercise to a reader.
$$
\begin{array}{c}
 [D,Y_{8k{+}7}']{=}-e^{-2u}Y_{8k{+}5}', \;
 [D,Y_{8k{+}8}']{=}e^uY_{8k{+}7}'{+}2e^{-2u}Y_{8k{+}6}'.
\end{array}
$$
 We finish the proof of the lemma by
$$
\begin{array}{c}
\left[D,[Y_{2}',Y_{8k{+}7}']\right]=\left[[D,Y_{2}'],X_{8k{+}7}']\right]+\left[Y_{2}',[D,X_{8k{+}7}']\right]=\\
{=}[{-}e^{-2u}Y_{0}', Y_{8k{+}7}']{+}[Y_{2}',{-}e^{-2u}Y_{8k{+}5}']{=}e^{-2u}Y_{8k{+}7}'
-e^{-2u}Y_{8k{+}7}'{=}0.
\end{array}
$$
\end{proof}
\begin{lemma}
The operators $Y_n', n \ge 1,$ satisfy the relations (\ref{str_n_2})
$$
[Y_q',Y_l']=d_{ql}Y'_{q+l}, \; q,l \in {\mathbb N},
$$
where structure constants $d_{ql}{=}{-}d_{lq}$ are taken from the Table \ref{structure_const_n_2}. 
\end{lemma}
\begin{proof}
We are going to apply the formulas (\ref{recursion_relations}) obtained in the previous Lemma.
$$
\begin{array}{c}
 \left[D,[Y_{8q}',Y_{8l}']\right]{=}\left[[D,Y_{8q}'],Y_{8l}'\right]{+}
\left[Y_{8q}',[D,Y_{8l}']\right]{=}\\
{=}[e^uY_{8q{-}1}'+2e^{-2u}Y_{8q{-}2}, Y_{8l}']{+}[Y_{8q}',e^uY_{8l{-}1}'+2e^{-2u}Y_{8l{-}2}]
{=}0.
\end{array}
$$
Hence $[Y_{8q}',Y_{8l}']=0$. Next relation is
$$
\begin{array}{c}
 \left[D,[Y_{8q}',Y_{8l+1}']\right]{=}\left[[D,Y_{8q}'],Y_{8l+1}'\right]{+}
\left[Y_{8q}',[D,Y_{8l+1}']\right]{=}\\
{=}[e^uY_{8q{-}1}'+2e^{-2u}Y_{8q{-}2}, Y_{8l+1}']{+}[Y_{8q}',{-}e^uY_{8l}']
{=}{-}e^uY_{8(q+l)}'.
\end{array}
$$
It follows that $[Y_{8q}',Y_{8l+1}']=Y_{8(q+l)+1}$ because $[D,Y_{8(q+l)+1}]={-}e^uY_{8(q+l)}'$. Then
$$
\begin{array}{c}
 \left[D,[Y_{8q}',Y_{8l+2}']\right]{=}\left[[D,Y_{8q}'],Y_{8l+2}'\right]{+}
\left[Y_{8q}',[D,Y_{8l+2}']\right]{=}\\
{=}[e^uY_{8q{-}1}'+2e^{-2u}Y_{8q{-}2}, Y_{8l+2}']{+}[Y_{8q}',{-}e^{-2u}Y_{8l}']
{=}2e^{-2u}Y_{8(q+l)}'.
\end{array}
$$
Recall that $[D,Y_{8(q+l)+1}']=-e^{-2u}Y_{8(q+l)}'$. Hence $[Y_{8q}',Y_{8l+2}']=-2Y_{8(q+l)+2}'$. We leave the reader, as an exercise, to prove the relations from the first row of Table \ref{structure_const_n_2}.
$$
\begin{array}{c}
[Y_{8q}',Y_{8l+3}']={-}Y_{8(q+l)+3}, [Y_{8q}',Y_{8l+4}']{=}0, [Y_{8q}',Y_{8l+5}']=Y_{8(q+l)+5}, \\

[Y_{8q}',Y_{8l+6}']=2Y_{8(q+l)+6}, [Y_{8q}',Y_{8l+7}']={-}Y_{8(q+l)+7}. 
\end{array}
$$
Now we switch to the second row of the Table \ref{structure_const_n_2}. We start with
$$
\begin{array}{c}
 \left[D,[Y_{8q+1}',X_{8l+1}']\right]{=}\left[[D,Y_{8q+1}'],Y_{8l+1}'\right]{+}
\left[Y_{8q+1}',[D,Y_{8l+1}']\right]{=}\\
{=}[{-}e^uY_{8q}', Y_{8l{+}1}']{+}[Y_{8q+1}',{-}e^{u}Y_{8l}']{=}0,\\

 \left[D,[Y_{8q+1}',Y_{8l+2}']\right]{=}\left[[D,Y_{8q+1}'],Y_{8l+2}'\right]{+}
\left[X_{8q+1}',[D,Y_{8l+2}']\right]{=}\\
{=}[{-}e^uY_{8q}', Y_{8l{+}2}']{+}[Y_{8q+1}',{-}e^{-2u}Y_{8l}']{=}2e^uY_{8(q+l){+}2}'{+}e^{-2u}Y_{8(q+l){+}1}'.
\end{array}
$$
Hence $[Y_{8q+1}',Y_{8l+2}']=Y_{8(q+l)+3}'$ as 
$[D,Y_{8(q+l)+3}']{=}2e^uY_{8(q+l){+}2}'{+}e^{-2u}Y_{8(q+l){+}1}'$. 
$$
\begin{array}{c}
 \left[D,[Y_{8q+1}',Y_{8l+3}']\right]{=}\left[[D,Y_{8q+1}'],Y_{8l+3}'\right]{+}
\left[Y_{8q+1}',[D,Y_{8l+3}']\right]{=}\\
{=}[{-}e^uY_{8q}', Y_{8l{+}3}']{+}[Y_{8q+1}',e^{-2u}Y_{8q+1}'{+}2e^{u}Y_{8l+2}']{=}
3e^{u}Y_{8(q+l){+}3}'
{=}[D,Y_{8(q+l)+4}].
\end{array}
$$
We conclude that  $[Y_{8q+1}',Y_{8l+3}']=Y_{8(q+l)+4}'$. Continuing in the same way and calculating
step by step commutators $[Y_{8q+r}',Y_{8l+s}']$ with $1 \le r \le s \le 7$ we obtain all structure relations (\ref{str_n_2}).
\end{proof}
We define a Lie algebra isomorphism $\varphi: {\chi}(e^u+e^{-2u}) \to \tilde {\mathfrak n}_2$ by setting 
$$\varphi(Y_n')=f_n, n \ge 0.$$
\end{proof}

Now we have to compare different gradings of $\tilde {\mathfrak n}_2$ and compute its growth fuction $F(n)$.

It follows from the proof of the previous theorem that weighted bigrading of ${\rm Diff}{\mathcal F}$ induces
${\mathbb Z}_{\ge 0}{\times}{\mathbb Z}_5$-grading  on $\tilde {\mathfrak n}_2$. The corresponding bigradings 
of basic elements $Y_n'$ are listed in the Table \ref{bigraded_struct_n_2}.

\begin{table}
\caption{Correspondence table of different gradings of $\chi(e^u{+}e^{-2u})$.}
\label{bigraded_struct_n_2}
\begin{tabular}{|c|c|c|c|c|c|c|c|c|}
\hline
&&&&&&&&\\[-10pt]
width $1$&$Y_{8k}'$  &$Y_{8k+1}'$ &$Y_{8k+2}'$&$Y_{8k+3}'$&$Y_{8k+4}'$&$Y_{8k+5}'$&$Y_{8k+6}'$&$Y_{8k+7}'$\\
&&&&&&&&\\[-10pt]
\hline
&&&&&&&&\\[-10pt]natural &$6k$ &$6k{+}1$&$6k{+}1$&$6k{+}2$&$6k{+}3$&$6k{+}4$&$6k{+}5$&$6k{+}5$ \\[2pt]
\hline
&&&&&&&&\\[-10pt]canon. &$\begin{pmatrix}4k\\2k\end{pmatrix}$ &$\begin{pmatrix}4k{+}1\\2k\end{pmatrix}$&$\begin{pmatrix}4k\\2k{+}1\end{pmatrix}$
&$\begin{pmatrix}4k{+}1\\2k{+}1\end{pmatrix}$&$\begin{pmatrix}4k{+}2\\2k{+}1\end{pmatrix}$&
$\begin{pmatrix}4k{+}3\\2k{+}1\end{pmatrix}$&$\begin{pmatrix}4k{+}4\\2k{+}1\end{pmatrix}$
&$\begin{pmatrix}4k{+}3\\2k{+}2\end{pmatrix}$\\[2pt]
\hline
&&&&&&&&\\[-10pt]${\mathbb Z}_{\ge0}{\times}{\mathbb Z}_5$ &$\begin{pmatrix}6k\\0\end{pmatrix}$ &$\begin{pmatrix}6k{+}1\\1\end{pmatrix}$&$\begin{pmatrix}6k{+}1\\{-}2\end{pmatrix}$&
$\begin{pmatrix}6k{+}2\\{-}1\end{pmatrix}$&$\begin{pmatrix}6k{+}3\\0\end{pmatrix}$&
$\begin{pmatrix}6k{+}4\\1\end{pmatrix}$&$\begin{pmatrix}6k{+}5\\2\end{pmatrix}$&
$\begin{pmatrix}6k{+}5\\{-}1\end{pmatrix}$ \\[2pt]
\hline
\end{tabular}
\end{table}

\section{Final remarks}

The characteristic Lie algebra $\chi(\sinh{u})$ of sinh-Gordon equation $u_{xy}=\sinh{u}$ was studied by Murtazina  and Zhiber in \cite{ZM}. An infinite basis of 
$\chi(\sinh{u})$ was constructed there and commutation relations were found. But the very important Lie algebras isomorphism
$$\chi(\sinh{u}) \cong {\mathcal L}({\mathfrak sl}(2, {\mathbb K})), {\mathbb K}={\mathbb R},{\mathbb C},$$ 
was missed there as well as
different gradings of $\chi(\sinh{u})$.

Sakieva examined the characteristic Lie algebra $\chi(e^u{+}e^{-2u})$ of Tzitzeica equation in \cite{Sakieva}. An infinite basis and commutation relations were found it this case also. But again  the very important Lie algebras isomorphism
$$\chi(e^u{+}e^{-2u}) \cong {\mathcal L}({\mathfrak sl}(3, {\mathbb K}), \mu), {\mathbb K}={\mathbb R},{\mathbb C},$$ 
was missed. 

Note also that existence of isomorphisms with non-negative loops ${\mathcal L}({\mathfrak sl}(2, {\mathbb K}))$ and ${\mathcal L}({\mathfrak sl}(3, {\mathbb K}), \mu)$ was missed despite the established slow linear growth of both algebras $\chi(\sinh{u})$ and $\chi(e^u{+}e^{-2u})$ \cite{ZM, Sakieva}.

\end{document}